%% file: arXiv-maxEnt.tex
\documentclass[a4paper,11pt,english]{article}
\usepackage{arxiv}
\input{arXiv-preamble}

\subimport{macros/}{all-macros.tex}

\begin{document}

    \title{\input{info/title.tex}{}}
    \author{Alex Goessmann*}
    \author{Martin Eigel}
    \affil{Weierstrass Institute of Applied Analysis and Stochastics \\
    Anton-Wilhelm-Amo-Straße 39\\
    Berlin, 10117, Germany\\ 
    \ \\
    *alex.goessmann@wias-berlin.de
    }

    \maketitle

    \begin{abstract}
        \input{info/abstract}
    \end{abstract}

    \subimport{}{./sections/introduction.tex}
    \subimport{}{./sections/mean-polytope.tex}
    \subimport{}{./sections/exp-family.tex}
    \subimport{}{./sections/gen-rep.tex}
    \subimport{}{./sections/boolean-stat.tex}
    \subimport{}{./sections/outlook.tex}

    \vskip 0.2in
    \bibliographystyle{plainnat}
    \bibliography{references.bib}

    \appendix

\end{document}

%% file: arXiv-preamble.tex
\usepackage{amsmath,amsfonts,amssymb,amsthm,enumerate,times}
\usepackage{mathtools}
\usepackage[usenames,dvipsnames]{color}
\usepackage{hyperref}
\hypersetup{
    breaklinks,
    colorlinks,
    linkcolor=gray,
    citecolor=gray,
    urlcolor=gray,
    pdftitle={},
    pdfauthor={Alex Goessmann}
}

\usepackage{tikz}
\usepackage{graphicx}
\usepackage{float}
\usepackage{comment}
\usepackage{csquotes}
\usepackage{braket}
\usepackage{listings}
\usepackage{verbatim}
\usepackage{etoolbox}
\usepackage[utf8]{inputenc}
\usepackage[english]{babel}
\usepackage[T1]{fontenc}
\usepackage{titlesec}
\usepackage{fancyhdr}
\usepackage{bbm}
\usepackage{bm}
\usepackage{algpseudocode}
\usepackage{algorithm}
\usepackage{lipsum}

\usepackage{minted}
\usepackage{tikz-3dplot}
\usepackage{import}
\usepackage{authblk}
\usepackage{catchfile}

\newtheorem{theorem}{Theorem}
\newtheorem{lemma}{Lemma}

\newtheorem{definition}{Definition}
\newtheorem{example}{Example}

\DeclareUnicodeCharacter{FB01}{fi}
\usepackage[round]{natbib}
\usepackage{wasysym}

%% file: macros/all-macros.tex
\subimport{}{general_macros.tex}

\subimport{}{tc_macros.tex}

\subimport{}{tikz_macros.tex}

\subimport{}{organization_macros.tex}

\input{tensor-notation/variables-macros}

\input{tensor-notation/cores-macros}

\input{tensor-notation/operations-macros}

\input{tensor-notation/graph-macros}

\input{chapter-specific/logic-macros}

\input{chapter-specific/probability-macros}

\input{chapter-specific/quantum-macros}

\input{chapter-specific/causality-macros}

\input{chapter-specific/polytopes-macros}

%% file: macros/general_macros.tex

\newcommand{\onetuple}[1]{1_{#1}}

\newcommand{\ozset}{\{0,1\}}

\newcommand{\uniquantwrtof}[2]{\forall{#1}:{#2}}

{\begin{center}
     \begin{algorithmic}
         \hspace{1cm}}
{\end{algorithmic}\end{center}}

\newcommand{\defcols}{\,:\,} 
\newcommand{\wcols}{\,:\,} 
\newcommand{\ncond}{,\,} 

\newcommand{\andspace}{\quad\text{and}\quad}
\newcommand{\ifspace}{\text{if}\quad}
\newcommand{\elsetext}{\text{else}}
\newcommand{\stspace}{\quad \text{subject to} \quad}

\newcommand{\entropysymbol}{\mathbb{H}}

\newcommand{\sentropyofwrt}[2]{\entropysymbol_{#2}\left[#1\right]}

\newcommand{\centropyofwrt}[3]{\entropysymbol_{#3}\left[#1,#2\right]}

\newcommand{\rr}{\mathbb{R}}
\newcommand{\nn}{\mathbb{N}}

\newcommand{\interiorof}[1]{{#1}^{\circ}}
\newcommand{\sbinteriorof}[1]{{\left(#1\right)}^{\circ}}

\newcommand{\cardof}[1]{\left|#1\right|}

\newcommand{\imageof}[1]{\mathrm{im}\left(#1\right)}

\newcommand{\convhullof}[1]{\mathrm{conv}\left(#1\right)}
\newcommand{\cubeof}[1]{[0,1]^{#1}}

\newcommand{\argmax}{\mathrm{argmax}}


\newcommand{\expof}[1]{\mathrm{exp}\left[#1\right]}
\newcommand{\probtensor}{\mathbb{P}}
\newcommand{\probtensorof}[1]{\probtensor^{#1}}

\newcommand{\secprobtensor}{\tilde{\mathbb{P}}}
\newcommand{\secprobat}[1]{\secprobtensor[#1]}

\newcommand{\probat}[1]{\probtensor\left[#1\right]}

\newcommand{\probwith}{\probat{\shortcatvariables}}

\newcommand{\expectationof}[1]{\mathbb{E}\left[#1\right]}

\newcommand{\lnof}[1]{\ln \left[ #1 \right] }

\newcommand{\ones}{\mathbb{I}}

\newcommand{\onesat}[1]{\ones\left[#1\right]}

\newcommand{\zeros}{0}
\newcommand{\zerosat}[1]{\zeros\left[#1\right]}


\newcommand{\restrictionofto}[2]{{#1}|_{#2}}





\newcommand{\formulaset}{\mathcal{F}}
\newcommand{\formulasetof}[1]{\formulaset_{#1}}








\newcommand{\parspace}{\rr^{\seldim}}
\newcommand{\fullparcube}{[0,1]^{\seldim}}




\newcommand{\mlnatomsymbol}{[\catorder]}

\newcommand{\partitionfunction}{\mathcal{Z}}










\newcommand{\indexinterpretation}{I}
\newcommand{\indexinterpretationof}[1]{\indexinterpretation_{#1}}

\newcommand{\indexinterpretationofat}[2]{\indexinterpretationof{#1}\left[#2\right]}

\newcommand{\invindexinterpretationof}[1]{\indexinterpretation_{#1}^{-1}}

\newcommand{\invindexinterpretationofat}[2]{\invindexinterpretationof{#1}(#2)}




\newcommand{\arbset}{\mathcal{U}}




%% file: macros/tc_macros.tex
\newcommand{\coloredmatrixof}[2]{\begin{pmatrix}
#1
\end{pmatrix}\left[#2\right]}


\newcommand{\bencodingof}[1]{\beta^{#1}}

\NewDocumentCommand{\bencodingofat}{m m o}{
    \IfValueTF{#3}{%
        \bencodingof{#1}\left[#2\middle|#3\right]
    }{%
        \bencodingof{#1}\left[#2\right]
    }%
}

\newcommand{\sencodingof}[1]{\sigma^{#1}}



\newcommand{\expfamilyof}[1]{\Gamma^{#1}}
\newcommand{\expfamily}{\genexpfamily}
\newcommand{\genexpfamily}{\expfamilyof{\sstat,\basemeasure}}

\newcommand{\realizabledistsof}[1]{\Lambda^{#1}}

\newcommand{\cansof}[1]{\Lambda^{#1}}



\newcommand{\decvariable}{I}
\newcommand{\decvariableof}[1]{\decvariable_{#1}}
\newcommand{\decindex}{i} 

\newcommand{\decdim}{n}

\newcommand{\decindexin}{\decindex\in[\decdim]}
\newcommand{\indexeddecvariable}{\decvariable=\decindex}


\newcommand{\cprankof}[1]{\mathrm{rank}\left(#1\right)}


%% file: macros/tikz_macros.tex
\newcommand{\dotsize}{0.15cm}
\newcommand{\nodeminsize}{0.8cm}
\newcommand{\nodegrayscale}{gray!50}
\newcommand{\colorlabelsize}{\tiny}
\newcommand{\corelabelsize}{\small}
\newcommand{\variablenodelabelsize}{\small}

\newcommand{\probcolor}{purple!60}
\newcommand{\concolor}{cyan}


\usetikzlibrary{arrows.meta}
\usetikzlibrary{shapes,positioning}
\usetikzlibrary{decorations.markings}
\usetikzlibrary{calc}
\usetikzlibrary{matrix}

\tikzset{
    midarrow/.style={
        postaction={decorate},
        decoration={markings, mark=at position 0.5 with {\arrow{>}}}
    },
    midbackarrow/.style={
        postaction={decorate},
        decoration={markings, mark=at position 0.5 with {\arrow{<}}}
    },
    ->-/.style={midarrow},
    -<-/.style={midbackarrow}
}

\newcommand{\drawtextnode}[3]{
    \node[anchor=center,align=center] at (#1,#2) {#3};
}


\newcommand{\drawvariablenode}[4]{
    \node [circle, draw, thick, fill=\nodegrayscale, minimum size = \nodeminsize] (#4) at (#1,#2) {};
    \drawtextnode{#1}{#2}{\variablenodelabelsize #3}
}

\newcommand{\drawtensorblock}[4]{
    \draw (#1-#3,#2-1) rectangle (#1+#3,#2+1);
    \drawtextnode{#1}{#2}{\corelabelsize #4}
}

\newcommand{\shortminus}{\scalebox{0.4}[1.0]{$-$}}

\newcommand{\drawvariabledot}[2]{
    \draw[fill] (#1,#2) circle (\dotsize);
}

\newcommand{\drawhwire}[3]{
    \draw[] (#1,#2) -- (#1+2,#2) node[midway,above] {\colorlabelsize #3};
}
\newcommand{\drawvwire}[3]{
    \draw[] (#1,#2) -- (#1,#2-2) node[midway,right] {\colorlabelsize #3};
}

\newcommand{\drawtdwire}[3]{
    \draw[->-] (#1,#2-2) -- (#1,#2) node[midway,right] {\colorlabelsize #3};
}
\newcommand{\drawddwire}[3]{
    \draw[->-] (#1,#2+2) -- (#1,#2) node[midway,right] {\colorlabelsize #3};
}


\newcommand{\drawvectormark}[2]{
    \draw[black, line width=0.6pt] (#1,#2) ++(-0.24,0.24) -- ++(0.48,-0.48);
    \draw[black, line width=0.6pt] (#1,#2) ++(-0.24,-0.24) -- ++(0.48,0.48);
}




\newcommand{\stantikzpic}[1]{
    \begin{center}
    \begin{tikzpicture}[scale=0.35, thick]
    #1
    \end{tikzpicture}
    \end{center}
}


\tikzset{
    sudokuspec/.style={
        sudokucell/.style={
            minimum size=0.6cm,
            draw=gray!50,
            anchor=center
        },
        sudokumatrix/.style={
            matrix of nodes,
            nodes=sudokucell,
            inner sep=0pt,
            row sep=-\pgflinewidth,
            column sep=-\pgflinewidth,
            draw=black,
            very thick,
        }
    }
}

%% file: macros/organization_macros.tex






\newcommand{\ComputationActivationNetwork}{Computation-Activation Network}
\newcommand{\ComputationActivationNetworks}{Computation-Activation Networks}

\newcommand{\CompActNet}{CompActNet}
\newcommand{\CompActNets}{CompActNets}



\newcommand{\HybridLogicNetworks}{Hybrid Logic Networks}










\newcommand{\defref}[1]{Def.~\ref{#1}}
\newcommand{\theref}[1]{Thm.~\ref{#1}}
\newcommand{\lemref}[1]{Lem.~\ref{#1}}

\newcommand{\problemref}[1]{Problem~\ref{#1}}

\newcommand{\exaref}[1]{Example~\ref{#1}}

\newcommand{\secref}[1]{Sect.~\ref{#1}}
\newcommand{\figref}[1]{Figure~\ref{#1}}

%% file: macros/tensor-notation/variables-macros.tex



\newcommand{\selvariable}{L}
\newcommand{\seldim}{p}
\newcommand{\selindex}{\ell}
\newcommand{\selenumerator}{s}
\newcommand{\selorder}{n}

\newcommand{\catvariable}{X}
\newcommand{\catdim}{m}
\newcommand{\catindex}{x} 
\newcommand{\catenumerator}{\atomenumerator}
\newcommand{\catorder}{\atomorder}

\newcommand{\headvariable}{Y} 
\newcommand{\headdim}{n}
\newcommand{\headindex}{y}



\newcommand{\selvariableof}[1]{\selvariable_{#1}}
\newcommand{\catvariableof}[1]{\catvariable_{#1}}
\newcommand{\headvariableof}[1]{\headvariable_{#1}}

\newcommand{\shortcatvariablelist}{\catvariableof{[\atomorder]}}

\newcommand{\shortcatindices}{\catindexof{[\catorder]}}

\newcommand{\seldimof}[1]{\seldim_{#1}}
\newcommand{\catdimof}[1]{\catdim_{#1}}
\newcommand{\headdimof}[1]{\headdim_{#1}}

\newcommand{\selindexof}[1]{\selindex_{#1}}
\newcommand{\catindexof}[1]{\catindex_{#1}}
\newcommand{\headindexof}[1]{\headindex_{#1}}

\newcommand{\selindexin}{\selindex\in[\seldim]}

\newcommand{\selenumeratorin}{\selenumerator\in[\selorder]}
\newcommand{\catenumeratorin}{\catenumerator\in[\catorder]}

\newcommand{\indexedcatvariableof}[1]{\catvariableof{#1}=\catindexof{#1}}
\newcommand{\indexedselvariableof}[1]{\selvariableof{#1}=\selindexof{#1}}
\newcommand{\indexedheadvariableof}[1]{\headvariableof{#1}=\headindexof{#1}}

\newcommand{\indexedselvariable}{\indexedselvariableof{}}

\newcommand{\catstatesof}[1]{[\catdimof{#1}]}

\newcommand{\shortcatvariables}{\shortcatvariablelist}
\newcommand{\indexedshortcatvariables}{\shortcatvariables=\shortcatindices}
\newcommand{\shortcatindicesin}{\shortcatindices\in\facstates}

\newcommand{\headvariables}{\headvariableof{[\seldim]}}

\newcommand{\secselindex}{\tilde{\selindex}}

\newcommand{\facstates}{\bigtimes_{\atomenumeratorin}\catstatesof{\atomenumerator}}


\newcommand{\seccatorder}{p} 


\newcommand{\tildecatindex}{\tilde{\catindex}}
\newcommand{\tildecatindexof}[1]{\tildecatindex_{#1}}

%% file: macros/tensor-notation/cores-macros.tex
\newcommand{\softactsymbol}{\alpha}
\newcommand{\softactsymbolof}[1]{\softactsymbol^{#1}}
\newcommand{\softactsymbolofat}[2]{\softactsymbol^{#1}\left[#2\right]}
\newcommand{\softacttensor}{\softactsymbol^\weightparam}
\newcommand{\softacttensorat}[1]{\softacttensor\left[#1\right]}
\newcommand{\softacttensorwith}{\softacttensorat{\headvariables}}
\newcommand{\softactleg}{\softactsymbolof{\selindex,\weightparam}}

\newcommand{\softactlegwith}{\softactsymbolofat{\selindex,\weightparam}{\headvariableof{\selindex}}}


\newcommand{\onehotmap}{\epsilon}
\newcommand{\onehotmapof}[1]{\onehotmap_{#1}}
\newcommand{\onehotmapofat}[2]{\onehotmap_{#1}\left[#2\right]}

\newcommand{\tbasis}{\onehotmapof{1}}
\newcommand{\tbasisat}[1]{\tbasis\left[#1\right]}
\newcommand{\fbasis}{\onehotmapof{0}}
\newcommand{\fbasisat}[1]{\fbasis\left[#1\right]}


\newcommand{\canparam}{\weightparam} 
\newcommand{\canparamof}[1]{\canparam_{#1}}
\newcommand{\canparamat}[1]{\canparam\left[#1\right]}
\newcommand{\canparamofat}[2]{\canparamof{#1}\left[#2\right]}
\newcommand{\canparamwith}{\canparamat{\selvariable}}

\newcommand{\canparamwithin}{\canparamwith\in\parspace}


\newcommand{\hardactsymbol}{\kappa}
\newcommand{\hardactsymbolof}[1]{\hardactsymbol^{#1}}
\newcommand{\hardactsymbolofat}[2]{\hardactsymbol^{#1}\left[#2\right]}
\newcommand{\hardacttensor}{\hardactsymbolof{\hardparam}}
\newcommand{\hardacttensorwith}{\hardacttensor\left[\headvariables\right]}

\newcommand{\hardactlegwith}{\hardactsymbolofat{\selindex,\hardparam}{\headvariableof{\selindex}}}

\newcommand{\kcoreof}[1]{\hardactsymbolof{#1}}

\newcommand{\kcoreofat}[2]{\hardactsymbolofat{#1}{#2}}


\newcommand{\meanparam}{\mu}
\newcommand{\meanparamof}[1]{\meanparam_{#1}}
\newcommand{\meanparamat}[1]{\meanparam\left[#1\right]}
\newcommand{\meanparamofat}[2]{\meanparamof{#1}\left[#2\right]}

\newcommand{\meanparamwith}{\meanparamat{\selvariable}}

\newcommand{\basemeasure}{\nu}
\newcommand{\basemeasureof}[1]{\basemeasure^{#1}}
\newcommand{\basemeasureofat}[2]{\basemeasure^{#1}\left[#2\right]}
\newcommand{\basemeasureat}[1]{\basemeasure\left[#1\right]}
\newcommand{\basemeasurewith}{\basemeasureat{\shortcatvariables}}

\newcommand{\acttensor}{\xi} 
\newcommand{\acttensorof}[1]{\acttensor^{#1}}

\newcommand{\acttensorat}[1]{\acttensor\left[#1\right]}
\newcommand{\acttensorwith}{\acttensorat{\headvariables}}
\newcommand{\acttensorleg}{\acttensorof{\selindex}}

\newcommand{\acttensorlegwith}{\acttensorleg\left[\headvariableof{\selindex}\right]}




\newcommand{\tnet}{\tau}
\newcommand{\tnetof}[1]{\tnet^{#1}}

\newcommand{\extnet}{\tnetof{\graph}}





\newcommand{\hypercore}{\tau}
\newcommand{\hypercoreat}[1]{\hypercore\left[#1\right]}
\newcommand{\hypercorewith}{\hypercoreat{\shortcatvariables}}

\newcommand{\hypercoreof}[1]{\hypercore^{#1}}
\newcommand{\hypercoreofat}[2]{\hypercoreof{#1}\left[#2\right]}

%% file: macros/tensor-notation/operations-macros.tex
\newcommand{\contractionof}[2]{\left\langle #1\right\rangle_{\left[ #2 \right]}}

\newcommand{\contraction}[1]{\contractionof{#1}{\varnothing}}

%% file: macros/tensor-notation/graph-macros.tex
\newcommand{\graph}{\mathcal{G}}

\newcommand{\nodes}{\mathcal{V}}



\newcommand{\node}{v}

\newcommand{\nodein}{\node\in\nodes}

\newcommand{\edges}{\mathcal{E}}

\newcommand{\edge}{e}

\newcommand{\edgein}{\edge\in\edges}


\newcommand{\elformat}{\mathrm{EL}}
\newcommand{\cpformat}{\mathrm{CP}}

\newcommand{\ttformat}{\mathrm{TT}}

\newcommand{\elgraph}{\elformat}

\newcommand{\tnset}{\mathcal{T}}
\newcommand{\tnsetof}[1]{\tnset^{#1}} 


%% file: macros/chapter-specific/logic-macros.tex



\newcommand{\atomformulaset}{\formulasetof{\mlnatomsymbol}}

\newcommand{\mintermformulaset}{\mathcal{F}_{\land}}

\newcommand{\atomorder}{d}

\newcommand{\atomenumerator}{k}

\newcommand{\atomenumeratorin}{\atomenumerator\in[\atomorder]}







\newcommand{\formula}{f}
\newcommand{\formulaof}[1]{\formula_{#1}}
\newcommand{\formulaat}[1]{\formula\left[#1\right]}

\newcommand{\formulaofat}[2]{\formulaof{#1}\left[#2\right]}




\newcommand{\enumformula}{\formulaof{\selindex}}









%% file: macros/chapter-specific/probability-macros.tex


\newcommand{\sstat}{t}
\newcommand{\sstatat}[1]{\sstat\left(#1\right)}

\newcommand{\hlnstat}{\sstat}

\newcommand{\sstatcoordinateof}[1]{\sstat_{#1}}

\newcommand{\sstatcoordinateofat}[2]{\sstat_{#1}\left[#2\right]}

\newcommand{\sstatccwith}{\bencodingofat{\sstat}{\headvariables,\shortcatvariables}} 

\newcommand{\sencsstat}{\sencodingof{\sstat}}
\newcommand{\sencsstatat}[1]{\sencodingof{\sstat}\left[#1\right]}
\newcommand{\sencsstatwith}{\sencsstatat{\shortcatvariables,\selvariable}}

\newcommand{\bencsstatat}[1]{\bencodingof{\sstat}\left[#1\right]}
\newcommand{\bencsstatwith}{\bencsstatat{\headvariables,\shortcatvariables}}

\newcommand{\expdistof}[1]{\probtensorof{#1}}
\newcommand{\expdistofat}[2]{\expdistof{#1}[#2]}
\newcommand{\expdist}{\probtensorof{(\sstat,\canparam,\basemeasure)}}

\newcommand{\expdistat}[1]{\expdist\left[#1\right]}

\newcommand{\maxentfamilyof}[2]{\Gamma^{#1,#2}}


\newcommand{\trivbm}{\ones}

%% file: macros/chapter-specific/quantum-macros.tex











%% file: macros/chapter-specific/causality-macros.tex


%% file: macros/chapter-specific/polytopes-macros.tex
\newcommand{\distmeasuredby}[1]{\Lambda^{#1}}

\newcommand{\stateset}{\mathcal{X}}
\newcommand{\sstatencoding}{\sencodingof{\sstat}}

\newcommand{\genstatshortcatencoding}[1]{\gamma^{\sstat}\left[\indexedshortcatvariables,\selvariable\right]}


\newcommand{\meanset}{\mathcal{M}}
\newcommand{\meansetof}[1]{\meanset_{#1}}
\newcommand{\genmeanset}{\meanset_{\sstat,\basemeasure}}
\newcommand{\hlnmeanset}{\meanset_{\hlnstat,\ones}}

\newcommand{\imset}{\mathcal{N}}
\newcommand{\imsetof}[2]{\imset_{#1}^{#2}}
\newcommand{\genimset}{\imsetof{\sstat,\basemeasure}{}}
\newcommand{\genfaceimset}{\imsetof{\sstat,\basemeasure}{\facesymbol}}

\newcommand{\imelement}{v}
\newcommand{\imelementof}[1]{\imelement^{#1}}
\newcommand{\imelementat}[1]{\imelement\left[#1\right]}
\newcommand{\imelementofat}[2]{\imelement^{#1}\left[#2\right]}
\newcommand{\imelementwith}{\imelementat{\selvariable}}


\newcommand{\facesymbol}{\mathcal{F}} 
\newcommand{\facesymbolof}[1]{\facesymbol^{#1}}
\newcommand{\faceto}[1]{\facesymbol(#1)}
\newcommand{\facelatticeof}[1]{L\left(#1\right)}
\newcommand{\genfacelattice}{\facelatticeof{\genmeanset}}
\newcommand{\facein}{\facesymbol\in\genfacelattice}

\newcommand{\secfacesymbol}{\tilde{\facesymbol}}



\newcommand{\genfacemeasure}{\basemeasureof{\sstat,\facesymbol}}
\newcommand{\genfacemeasureat}[1]{\basemeasureofat{\sstat,\facesymbol}{#1}}
\newcommand{\genfacemeasurewith}{\genfacemeasureat{\shortcatvariables}}


\newcommand{\genmean}{\meanparam^*}
\newcommand{\genmeanat}[1]{\genmean[#1]}
\newcommand{\genmeanwith}{\genmeanat{\selvariable}}

\newcommand{\canparamsetof}[1]{\mathcal{P}_{#1}}
\newcommand{\gencanparamset}{\canparamsetof{\sstat,\basemeasure}}

\newcommand{\weightparam}{\theta}

\newcommand{\weightparamat}[1]{\weightparam\left[#1\right]}

\newcommand{\weightparamwith}{\weightparamat{\selvariable}}
\newcommand{\weightparamin}{\weightparam\in\weightspace}
\newcommand{\weightspace}{\rr^{\seldim}}

\newcommand{\cumfunctionwrt}[1]{A^{#1}}

\newcommand{\cumfunction}{\cumfunctionwrt{(\sstat,\basemeasure)}}
\newcommand{\cumfunctionof}[1]{\cumfunction(#1)}

\newcommand{\forwardmapwrt}[1]{F^{#1}}
\newcommand{\forwardmap}{\forwardmapwrt{(\sstat,\basemeasure)}}
\newcommand{\forwardmapwrtof}[2]{\forwardmapwrt{#1}(#2)}
\newcommand{\forwardmapof}[1]{\forwardmapwrtof{(\sstat,\basemeasure)}{#1}}

\newcommand{\backwardmapwrt}[1]{B^{#1}}
\newcommand{\backwardmap}{\backwardmapwrt{(\sstat,\basemeasure)}}
\newcommand{\backwardmapwrtof}[2]{\backwardmapwrt{#1}(#2)}
\newcommand{\backwardmapof}[1]{\backwardmapwrtof{(\sstat,\basemeasure)}{#1}}

\newcommand{\hardlegset}{\mathcal{U}}
\newcommand{\hardparam}{(\hardlegset,\headindexof{\hardlegset})}


%% file: info/title.tex
Tensor network representations of discrete maximum entropy distributions via mean polytopes

%% file: info/abstract.tex
We present tensor network representations for discrete maximum entropy distributions under expectation constraints.
To this end, we introduce \ComputationActivationNetworks{} (\CompActNets{}), a tensor network architecture that subsumes exponential families.
By leveraging the geometry of the convex polytope of realizable expectation vectors, we represent any maximum entropy distribution in the same architecture.
We exploit the fact that proper faces of this polytope correspond to the boundary closure of exponential families, which restricts the distribution's support.
We then derive explicit representations for the support within the \CompActNet{} architecture.
The proposed framework suggests tensor network ranks as complexity measures for faces.
Finally, a case study on Boolean statistics links the geometry of 0/1-polytopes directly to propositional formulas.



%% file: sections/introduction.tex
\section{Introduction}

The Shannon entropy of a distribution quantifies the amount of structure, or information, and therefore appears as an important concept across many scientific areas.
By Shannon's source code theorem \cite{shannon_mathematical_1948}, entropy describes fundamental limits of information transfer.
In statistical physics, founded on the ergodic assumptions, maximum entropy distributions such as Maxwell-Boltzmann distributions are the gold standard to model equilibrium distributions \cite{uffink_compendium_2007,presse_principles_2013}.

The principle of entropy maximization also appears naturally in learning problems \cite{murphy_probabilistic_2023}.
Extracting characteristics observed in data does not determine an estimated model yet.
Choosing the distribution with the least structure among those reflecting the observed characteristics is a regularization principle.
Here one can choose the Shannon entropy to quantify the negative amount of structure in the distribution and maximize the quantity with respect to the constraints.
These maximum entropy problems are the dual problems to convex maximum likelihood problems \cite{koller_probabilistic_2009}.
Maximum entropy principles appear further in reinforcement learning \cite{yoon_maximum_2024,song_survey_2025}, AGI \cite{miotto_new_2025} and Bayesian NN \cite{rathnakumar_bayesian_2026}. 

A classical result states that maximum entropy distributions belong to exponential families, if and only if the corresponding mean parameter is in the relative interior of the mean polytope \cite{barndorff-nielsen_information_1978}.
A natural extension is therefore the closure of the exponential families as investigated in \cite{csiszar_closures_2005,malago_note_2010}.
Alternative methods use toric ideals to allow for infinite weight vectors \cite{sottile_toric_2008}.
Numerical stability of the maximum entropy distributions near the boundary is studied in \cite{straszak_computing_2017,straszak_maximum_2019}.

Less obvious, studying general maximum entropy distributions might be a key step towards a mathematically sound unification of logical and probabilistic models.
Combining logical and probabilistic AI is a necessity in statistical relational AI, neuro-symbolic AI and explainable AI \cite{de_raedt_probabilistic_2008,getoor_introduction_2019,sarker_neuro-symbolic_2022}.
Probability theory with exponential families typically focuses on distributions with maximal support \cite{koller_probabilistic_2009}. 
In contrast, logic can be understood as a study of the support of models, marking the worlds consistent with a knowledge base.
Logical and probabilistic models can be treated in the tensor network formalism, which thus serves as a unifying representation language \cite{goessmann_tensor_2026}.

We in this work derive tensor network representations of all discrete maximum entropy distributions, based on the framework presented in \cite{goessmann_tensor-network_2025,goessmann_tensor_2026}.
Tensor network formats originate from quantum many-body physics \cite{white_density-matrix_1993} as a mitigation scheme for the curse of dimensionality \cite{bellman_adaptive_1961}.
From a numerical perspective \cite{hackbusch_new_2009,hackbusch_tensor_2012} formats such as the $\cpformat$ format \cite{hitchcock_expression_1927,beylkin_algorithms_2005} and the $\ttformat$ format \cite{oseledets_tensor-train_2011} have been studied.
They have been applied to various areas such as in high-dimensional PDEs \cite{eigel_variational_2019,eigel_adaptive_2020}, or quantum simulation \cite{sander_quantum_2025}.
Lastly, tensor network schemes are applied to artificial intelligence \cite{badreddine_logic_2022,domingos_tensor_2025,ali_explicit_2025}.

\subsection{Tensor notation}

We introduce tensors based on index variables $\catvariableof{\catenumerator}$ to each of the $\catenumeratorin=\{0,\ldots,\catorder-1\}$ axes, each taking values in $[\catdimof{\catenumerator}]=\{0,\ldots,\catdimof{\catenumerator}-1\}$, where $\catdimof{\catenumerator}\in\nn$ is the dimension of the variable.
Tensors are then maps
\begin{align*}
    \hypercorewith \defcols \facstates \rightarrow \rr
\end{align*}
from indices $\shortcatindices=(\catindexof{0},\ldots,\catindexof{\catorder-1})\in\facstates$ to their coordinates $\hypercoreat{\indexedshortcatvariables}\in\rr$.
One-hot encodings of index tuples $\shortcatindices$ are specific tensors $\onehotmapofat{\shortcatindices}{\shortcatvariables}$ such that $\onehotmapofat{\shortcatindices}{\shortcatvariables=\tildecatindexof{[\catorder]}}=1$ if $\tildecatindexof{[\catorder]}=\shortcatindices$ and $0$ else.

Given a hypergraph $\graph=\big(\nodes,\edges\big)$ of an arbitrary node set $\nodes$ and hyperedges $\edgein$ as arbitrary subsets of $\nodes$, we define decorating tensor network as follows.
Each node $\nodein$ is decorated by a variable $\catvariableof{\node}$ of dimension $\catdimof{\node}$ and each hyperedge $\edgein$ by a tensor $\hypercoreofat{\edge}{\catvariableof{\edge}}$ with the index variables to the contained nodes.
The set of decorating tensors then builds the tensor network $\extnet$ on the hypergraph.
A contraction of the tensor network keeping the nodes $\arbset\subset\nodes$ open is then the tensor $\contractionof{\extnet}{\catvariableof{\arbset}}$ with variables $\catvariableof{\arbset}$ and coordinates to the indices $\catindexof{\arbset}$ by
\begin{align*}
    \contractionof{\extnet}{\indexedcatvariableof{\arbset}}
    = \sum_{\catindexof{\nodes\backslash\arbset}} \prod_{\edgein} \hypercoreofat{\edge}{\indexedcatvariableof{\edge}} \, .
\end{align*}
We in this work always assume that to each node in the hypergraph there is at least one hyperedge containing it.

\subsection{Maximum entropy problem}

Given a non-negative and non-vanishing base measure $\basemeasurewith$, a probability distribution is a non-negative tensor $\probwith$ such that
\begin{align*}
    \contraction{\probwith,\basemeasurewith} = 1 \, .
\end{align*}
We denote the set of such distributions by
\begin{align*}
    \distmeasuredby{\basemeasure}
    = \big\{ \probwith \wcols &\probwith \geq \zerosat{\shortcatvariables} \ncond \contraction{\probwith,\basemeasurewith} = 1 \ncond \\
    &\forall_{\shortcatindicesin}: \, (\basemeasureat{\indexedshortcatvariables}=0) \Rightarrow (\probat{\indexedshortcatvariables}=0)  \big\} \, .
\end{align*}
Note that we here restrict the support of the tensor $\probwith$ to be included in the support of the base measure, to avoid ambiguity in the representation.

The entropy of a distribution relative to the base measure $\basemeasure$ is
\begin{align*}
    \sentropyofwrt{\probwith}{\basemeasure}
    = -\contraction{\probwith,\lnof{\probwith},\basemeasurewith} \, .
\end{align*}
Note that this quantity depends on the chosen base measure.

A statistic is a function $\sstat:\facstates\rightarrow\rr^\seldim$ assigning a real vector of dimension $\seldim$ to each state $\shortcatindicesin$.
We denote by $\sstatcoordinateof{\selindex}$ the restriction to the $\selindex$-th coordinate of $\sstat$.
The mean parameter of a distribution $\probwith$ to a statistic $\sstat$ is the vector $\meanparamwith\in\rr^\seldim$ with the coordinates
\begin{align*}
    \meanparamat{\indexedselvariable}
    = \expectationof{\enumformula}
    = \contraction{\sstatcoordinateofat{\selindex}{\shortcatvariables},\probwith,\basemeasurewith} \, .
\end{align*}
To express the computation of the mean parameter as a single contraction, we introduce a coordinate selection variable $\selvariable$ of dimension $\seldim$ and the selection encoding $\sencsstatwith$ with coordinates by
\begin{align*}
    \sencsstatat{\indexedshortcatvariables,\indexedselvariable}
    = \sstatcoordinateofat{\selindex}{\indexedshortcatvariables} \, .
\end{align*}
The mean parameter is then
\begin{align*}
    \meanparamwith
    = \contractionof{\probwith,\sencsstatwith}{\selvariable} \, .
\end{align*}

The maximum entropy problem given a mean parameter $\genmeanwith$ is stated by
\begin{equation}
    \tag{$\mathrm{P}_{\sstat,\meanparam,\basemeasure}$}\label{prob:maxEntropy}
    \argmax_{\probwith\in\distmeasuredby{\basemeasure}} \sentropyofwrt{\probwith}{\basemeasure}
    \stspace
    \contractionof{\probwith,\sencsstatwith,\basemeasurewith}{\selvariable} = \genmeanat{\selvariable}
\end{equation}

\subsection{\ComputationActivationNetworks{}}

We now introduce \ComputationActivationNetworks{}, a family of tensor networks representing, as we will show in this work, the solution of any instance of \problemref{prob:maxEntropy}.
Given a statistic $\sstat:\facstates\rightarrow\rr^{\seldim}$ we define an index enumeration variable $\headvariableof{\selindex}$ of dimension $\cardof{\imageof{\sstatcoordinateof{\selindex}}}$ and injective index enumeration vectors
\begin{align*}
    \indexinterpretationofat{\selindex}{\headvariableof{\selindex}} \wcols [\headdimof{\selindex}] \rightarrow \rr \, .
\end{align*}
Based on the enumeration of the image we define the basis encoding tensor of the statistic as
\begin{align*}
    \sstatccwith
    = \sum_{\shortcatindicesin}
    \left(\bigotimes_{\selindexin}
        \onehotmapofat{\invindexinterpretationofat{\selindex}{\sstatcoordinateofat{\selindex}{\shortcatindices}}}{\headvariableof{\selindex}}\right)
    \otimes \onehotmapofat{\shortcatindices}{\shortcatvariables} \, .
\end{align*}
A computation network is any representation of $\sstatccwith$ as a tensor network.
These can be constructed in the case where the statistics are a composition of connective functions, see \cite{goessmann_tensor_2026}.

An activation tensor is $\hypercoreat{\headvariables}$ and the corresponding \ComputationActivationNetwork{} with respect to the statistic $\sstat$ and the base measure $\basemeasurewith$ the tensor
\begin{align*}
    \probwith
    = \frac{
        \contractionof{\sstatccwith,\hypercoreat{\headvariables}}{\shortcatvariables}
    }{
        \contractionof{\sstatccwith,\hypercoreat{\headvariables},\basemeasurewith}{\shortcatvariables}
    } \, .
\end{align*}

We are interested in decomposition formats of $\hypercoreat{\headvariables}$, where we use sets of tensor networks $\tnsetof{\graph}$ on a hypergraph $\graph$.

\begin{definition}
    The \ComputationActivationNetworks{} to a statistic $\sstat$ and a hypergraph $\graph$ is the family
    \begin{align*}
        \cansof{\sstat,\graph,\basemeasure}
        = \left\{
              \frac{\contractionof{\hypercoreat{\headvariables},\bencsstatwith}{\shortcatvariables}}{\contraction{\hypercoreat{\headvariables},\bencsstatwith,\basemeasurewith}}
              \wcols \hypercoreat{\headvariables} \in \tnsetof{\graph}
        \right\} \, .
    \end{align*}
\end{definition}

\subsection{Contributions}

We exploit the novel tensor network formalism developed in \cite{goessmann_tensor_2026} to represent generic maximum entropy distributions based on the position of their mean parameters in the mean polytope, see \theref{the:MAINgenMaxEntChar}.
The connection to logical formalisms of artificial intelligence is exploited in the study of Boolean statistics, where we interpret mean polytopes based on satisfiability of formulas (see \theref{the:boolVecSat}) and construct statistics to arbitrary polytopes (see \theref{the:boolStatToPolytope}).

\subsection{Structure of the paper}

To prepare for the presentation of our main results we introduce in section \secref{sec:meanPolytope} the mean polytope, its faces, and their tensor network representations.
In \secref{sec:expFamily} we study exponential families as classes of elementary \ComputationActivationNetworks{} and show that they are the maximum entropy distributions to mean parameters in the relative interior of the mean polytope.
This result is generalized in \secref{sec:genMaxEnt} where we characterize generic maximum entropy distributions based on \ComputationActivationNetworks{}.
In \secref{sec:booleanStat} we investigate consequences of these results in case of Boolean statistics and connect them to propositional logics.

%% file: sections/mean-polytope.tex
\section{The mean polytope}\label{sec:meanPolytope}

Given a statistic function
\begin{align*}
    \sstat \wcols \facstates \rightarrow \rr^{\seldim}
\end{align*}
on the states $\facstates$ and a base measure $\basemeasurewith$ the mean polytope is the set
\begin{align*}
    \genmeanset
    \coloneqq \left\{\contractionof{\probwith,\sencsstatwith,\basemeasurewith}{\selvariable}\wcols\probwith\in\distmeasuredby{\basemeasure} \right\}
\end{align*}
containing all mean parameters for the distributions in $\distmeasuredby{\basemeasure}$.
By $\sencsstatwith$ we denote the selection encoding of the statistic (see introduction).
Since $\distmeasuredby{\basemeasure}$ is the convex hull of the vertices $\frac{1}{\basemeasureat{\indexedshortcatvariables}} \cdot \onehotmapofat{\shortcatindices}{\shortcatvariables}$, and any mean parameter is a linear transformation of the distribution, the mean polytope is the convex hull of the vectors
\begin{align*}
    \genimset
    \coloneqq \{\sencsstatat{\indexedshortcatvariables,\selvariable}
    \wcols\shortcatindices\in\facstates\ncond\basemeasureat{\indexedshortcatvariables}\neq0 \} \, .
\end{align*}

%
%

\subsection{Faces}


When studying maximum entropy distributions, we will pay special attention to the faces of a polytope, which form the boundary.
Here, we define them based on convex hulls and introduce tensor representations by refined base measures.

\begin{definition}
    We say that the convex hull of a subset $\genfaceimset\subset\genimset$ is a face of a polytope $\genmeanset$, if and only if there is a vector $\canparamofat{\facesymbol}{\selvariable}\in\parspace$ such that
    \begin{align*}
        \genfaceimset
        = \argmax_{\meanparamwith\in\genimset} \contraction{\meanparamwith,\canparamofat{\facesymbol}{\selvariable}} \, .
    \end{align*}
    We denote the face as $\facesymbol=\convhullof{\genfaceimset}$.
    The set of all faces of the mean polytope $\genmeanset$ is denoted by $\facelatticeof{\genmeanset}$.
\end{definition}

The set $\facelatticeof{\genmeanset}$, partially ordered by the inclusion operation, is a lattice (see \cite{ziegler_lectures_2013}).
We now show that each face itself is a convex polytope corresponding to a refined base measure.

\begin{definition}
    \label{def:faceMeasure}
    The refined base measure with respect to a face $\facein$ is the tensor $\genfacemeasurewith$ with coordinates to $\shortcatindicesin$ by
    \begin{align*}
        \genfacemeasurewith
        = \begin{cases}
              \basemeasureat{\indexedshortcatvariables} & \ifspace \sstatat{\shortcatindices}\in\genfaceimset \\
              0 & \elsetext
        \end{cases} \, .
    \end{align*}
\end{definition}

The refined base measure has by definition a smaller support than the base measure.
We now show that restricting the distributions in $\distmeasuredby{\basemeasure}$ to those in $\distmeasuredby{\basemeasureof{\sstat,\facesymbol}}$ amounts to a restriction of the polytope $\genmeanset$ to the corresponding face.


\begin{lemma}
    \label{lem:faceAsRefinedPolytope}
    For any face $\facein$ we have
    \begin{align*}
        \facesymbol
        = \meansetof{\sstat,\basemeasureof{\sstat,\facesymbol}} \, .
    \end{align*}
\end{lemma}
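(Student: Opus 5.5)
The plan is to prove the two inclusions separately. Both rest on one observation already made in the paper: $\meansetof{\sstat,\basemeasure'}$ is the convex hull of the image vectors of the support of the base measure $\basemeasure'$. We apply this to $\basemeasure'=\genfacemeasure$. First note that $\genfacemeasure$ is nonnegative and non-vanishing. By definition the face $\facesymbol=\convhullof{\genfaceimset}$ has $\genfaceimset\subset\genimset$ nonempty, since it is an argmax over a finite nonempty set. So some state $\shortcatindices$ with $\basemeasureat{\indexedshortcatvariables}\neq 0$ has $\sstatat{\shortcatindices}\in\genfaceimset$, and hence $\genfacemeasure$ is a legitimate base measure.

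Next I would repeat the vertex argument for the refined measure. The set $\distmeasuredby{\genfacemeasure}$ consists of nonnegative tensors supported in $\{\shortcatindices : \basemeasureat{\indexedshortcatvariables}\neq0,\ \sstatat{\shortcatindices}\in\genfaceimset\}$ and normalized against $\genfacemeasure$. It is therefore the convex hull of the normalized one-hot tensors $\frac{1}{\basemeasureat{\indexedshortcatvariables}}\onehotmapofat{\shortcatindices}{\shortcatvariables}$ over exactly these states. The map
\[
\probwith\mapsto\contractionof{\probwith,\sencsstatwith,\genfacemeasurewith}{\selvariable}
\]
is linear. On such a vertex it gives $\sencsstatat{\indexedshortcatvariables,\selvariable}=\sstatat{\shortcatindices}$, because the factor $\genfacemeasure$ equals $\basemeasure$ on the support and cancels the normalization. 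Consequently $\meansetof{\sstat,\genfacemeasure}$ is the convex hull of
\[
\{\sstatat{\shortcatindices} : \basemeasureat{\indexedshortcatvariables}\neq0,\ \sstatat{\shortcatindices}\in\genfaceimset\}.
\]

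It remains to show that this set equals $\genfaceimset$. The inclusion $\subset$ is immediate. For $\supset$, every element of $\genfaceimset\subset\genimset$ is by definition of $\genimset$ of the form $\sstatat{\shortcatindices}$ for some state with $\basemeasureat{\indexedshortcatvariables}\neq0$, and it lies in $\genfaceimset$. Taking convex hulls then gives $\facesymbol=\meansetof{\sstat,\genfacemeasure}$.

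The only delicate step is the vertex description of $\distmeasuredby{\genfacemeasure}$. One has to check that the support condition built into $\distmeasuredby{\cdot}$ exactly excludes the states off the face, and that the convex-hull statement transfers to the refined measure. I would verify this directly: write $\probwith=\sum_{\shortcatindices}\probat{\indexedshortcatvariables}\genfacemeasureat{\indexedshortcatvariables}\cdot\frac{1}{\genfacemeasureat{\indexedshortcatvariables}}\onehotmapofat{\shortcatindices}{\shortcatvariables}$, summing over the support of $\genfacemeasure$. The weights are nonnegative and sum to one by the normalization condition, so this is a convex combination of the vertices.
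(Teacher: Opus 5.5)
Your proposal is correct and follows essentially the same route as the paper, which likewise notes that $\genfacemeasure$ is supported exactly on the states with $\basemeasureat{\indexedshortcatvariables}\neq0$ and $\sstatat{\shortcatindices}\in\genfaceimset$, and then chains $\facesymbol=\convhullof{\genfaceimset}$ with the convex-hull description of the mean polytope for the refined measure. Your extra checks (that $\genfacemeasure$ does not vanish, and the explicit convex decomposition of elements of $\distmeasuredby{\genfacemeasure}$) simply spell out what the paper takes from its general remark at the start of the section.
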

\begin{proof}
    For any $\shortcatindicesin$ we have $\genfacemeasureat{\indexedshortcatvariables}\neq0$ if and only if $\sstatat{\shortcatindices}\in\genfaceimset$ and $\basemeasureat{\indexedshortcatvariables}\neq0$.
    Thus we have
    \begin{align*}
        \facesymbol
        &= \convhullof{\genfaceimset} \\
        &= \convhullof{\sencsstatat{\indexedshortcatvariables,\selvariable} \wcols \sstatat{\shortcatindices}\in\genfaceimset\ncond \basemeasureat{\indexedshortcatvariables}\neq 0} \\
        &= \convhullof{\sencsstatat{\indexedshortcatvariables,\selvariable} \wcols \genfacemeasureat{\indexedshortcatvariables}\neq 0} \\
        &= \meansetof{\sstat,\basemeasureof{\sstat,\facesymbol}} \, . \qedhere
    \end{align*}
\end{proof}

Representability of a distribution with respect to face measures is an equivalent condition for the mean parameter of a distribution to be on a face, as we show next.

\begin{lemma}
    \label{lem:faceMeasureRepCondition} 
    If and only if for a distribution $\probwith\in\distmeasuredby{\basemeasure}$ and a face $\facein$ we have
    \begin{align*}
        \contractionof{\sencsstatwith,\probwith,\basemeasurewith}{\selvariable}\in\facesymbol\, ,
    \end{align*}
    then 
    \begin{align*}
        \probwith \in \distmeasuredby{\genfacemeasure} \, .
    \end{align*}
\end{lemma}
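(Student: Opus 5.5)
We prove the two directions separately, using the defining vector $\canparamofat{\facesymbol}{\selvariable}$ of the face $\facesymbol$. Write $\genmean$ for the mean parameter $\contractionof{\sencsstatwith,\probwith,\basemeasurewith}{\selvariable}$ of $\probwith$. Write $c = \max_{\meanparamwith\in\genimset}\contraction{\meanparamwith,\canparamofat{\facesymbol}{\selvariable}}$ for the maximal value of the linear functional. By definition of a face, $\genfaceimset$ consists exactly of those points of $\genimset$ where this value $c$ is attained. Every other point of $\genimset$ has a value strictly below $c$.

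For the backward direction, assume $\probwith\in\distmeasuredby{\genfacemeasure}$. Then $\probwith$ vanishes outside the support of $\genfacemeasure$. On that support $\genfacemeasure$ coincides with $\basemeasure$, so
\begin{align*}
    \contraction{\probwith,\genfacemeasurewith}=\contraction{\probwith,\basemeasurewith}=1 \, ,
\end{align*}
and the mean parameter is the same whether computed with $\basemeasure$ or with $\genfacemeasure$. Hence $\genmean\in\meansetof{\sstat,\basemeasureof{\sstat,\facesymbol}}$, and this set equals $\facesymbol$ by \lemref{lem:faceAsRefinedPolytope}.

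For the forward direction, assume $\genmean\in\facesymbol$. Since $\facesymbol=\convhullof{\genfaceimset}$, every point of $\facesymbol$ attains the value $c$, so $\contraction{\genmean,\canparamofat{\facesymbol}{\selvariable}}=c$. On the other hand, this value can be written as a convex combination:
\begin{align*}
    \contraction{\genmean,\canparamofat{\facesymbol}{\selvariable}}
    = \sum_{\shortcatindicesin} \probat{\indexedshortcatvariables}\,\basemeasureat{\indexedshortcatvariables}\, \contraction{\sencsstatat{\indexedshortcatvariables,\selvariable},\canparamofat{\facesymbol}{\selvariable}} \, .
\end{align*}
The weights $\probat{\indexedshortcatvariables}\basemeasureat{\indexedshortcatvariables}$ are nonnegative and sum to one. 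They can only be nonzero where $\basemeasureat{\indexedshortcatvariables}\neq0$, that is, at indices whose statistic lies in $\genimset$. A convex combination of numbers that are all at most $c$ equals $c$ only if every term with positive weight equals $c$. So for each $\shortcatindices$ with $\probat{\indexedshortcatvariables}\basemeasureat{\indexedshortcatvariables}>0$ we get $\sstatat{\shortcatindices}\in\genfaceimset$, and therefore $\genfacemeasureat{\indexedshortcatvariables}\neq0$.

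It remains to check that $\probwith\in\distmeasuredby{\genfacemeasure}$. If $\basemeasureat{\indexedshortcatvariables}=0$, then $\probat{\indexedshortcatvariables}=0$ by the support condition of $\distmeasuredby{\basemeasure}$. If $\basemeasureat{\indexedshortcatvariables}\neq0$ but $\genfacemeasureat{\indexedshortcatvariables}=0$, the previous paragraph forces $\probat{\indexedshortcatvariables}=0$. So $\probwith$ is supported inside the support of $\genfacemeasure$. The normalization with respect to $\genfacemeasure$ then holds because both measures agree on that support.

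The main point needing care is the strict inequality used above. A point of $\genimset$ outside $\genfaceimset$ has value strictly below $c$, exactly because $\genfaceimset$ is defined as the full argmax set. Finiteness of $\genimset$ ensures the maximum $c$ exists.
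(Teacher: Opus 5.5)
Your proof is correct and uses essentially the same argument as the paper. You write $\langle \mu, \theta^{\mathcal{F}}\rangle$ as a convex combination with weights $\mathbb{P}\cdot\nu$, and note that attaining the maximum forces every positively weighted state to map into $\mathcal{N}^{\mathcal{F}}_{t,\nu}$; for the converse you invoke \lemref{lem:faceAsRefinedPolytope} to place $\mu$ directly in $\mathrm{conv}(\mathcal{N}^{\mathcal{F}}_{t,\nu})$, which is a slightly cleaner closing step than the paper's appeal to the equivalence between attaining the maximal value of the functional and $\mu \in \mathcal{F}$.
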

\begin{proof}
    For a distribution $\probwith\in\distmeasuredby{\basemeasure}$ to be in $\distmeasuredby{\genfacemeasure}$, it is enough to show that $\probwith$ is only supported at $\genfacemeasurewith$.
    We have
    \begin{align*}
        \meanparamat{\selvariable}
        = \sum_{\shortcatindices} \probat{\indexedshortcatvariables}\cdot\basemeasureat{\indexedshortcatvariables} \cdot \genstatshortcatencoding \, .
    \end{align*}
    Now, let $\canparamofat{\facesymbol}{\selvariable}$ be a face normal to the face $\facesymbol$.
    We then have
    \begin{align*}
        \contraction{\meanparamwith,\canparamofat{\facesymbol}{\selvariable}}
        = \sum_{\shortcatindicesin}
        \probat{\indexedshortcatvariables}\cdot\basemeasureat{\indexedshortcatvariables} \cdot \contraction{\genstatshortcatencoding,\canparamofat{\facesymbol}{\selvariable}} \, .
    \end{align*}

    Now if and only if $\probat{\indexedshortcatvariables}\cdot\basemeasureat{\indexedshortcatvariables}$ is supported only for $\shortcatindices$ with $\sstatat{\shortcatindices}\in\genfaceimset$ we have that
    \begin{align*}
        \contraction{\meanparamwith,\canparamofat{\facesymbol}{\selvariable}}
        = \max_{\meanparamwith\in\genmeanset} \contraction{\meanparamwith,\canparamofat{\facesymbol}{\selvariable}}
    \end{align*}
    which is equal to $\meanparamwith\in\facesymbol$.
    Thus, if and only if $\meanparamwith\in\facesymbol$ then $\probat{\shortcatvariables}$ is only supported at $\shortcatindices$ in the support of $\genfacemeasurewith$.
\end{proof}

Let us now investigate tensor network representations of refined base measures, based on the basis encoding $\bencodingof{\sstat}$ of a statistic.

\begin{theorem}
    \label{the:faceMeasureCharacterization}
    For any face $\facesymbol$ of $\meanset$ we have
    \begin{align*}
        \genfacemeasureat{\shortcatvariables}
        =\contractionof{\sstatccwith,\kcoreofat{\facesymbol}{\headvariables},\basemeasurewith}{\shortcatvariables}
    \end{align*}
    where
    \begin{align*}
        \kcoreofat{\facesymbol}{\headvariables}
        = \sum_{\imelementwith\in\genfaceimset} \onehotmapofat{\imelementwith}{\headvariables} \, .
    \end{align*}
\end{theorem}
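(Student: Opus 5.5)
We verify the identity coordinatewise, fixing a state $\shortcatindicesin$ and computing the right-hand side directly from the definition of the basis encoding $\bencodingof{\sstat}$. The statement is slightly informal: $\kcoreofat{\facesymbol}{\headvariables}$ is a sum of one-hot encodings $\onehotmapofat{\imelementwith}{\headvariables}$ of vectors $\imelementwith\in\genfaceimset\subset\rr^\seldim$. We read each such encoding as the one-hot tensor of the index tuple $(\invindexinterpretationofat{\selindex}{\imelementat{\indexedselvariable}})_{\selindexin}$. This tuple is well defined because $\genfaceimset\subset\genimset$, so every coordinate $\imelementat{\indexedselvariable}$ lies in $\imageof{\sstatcoordinateof{\selindex}}$. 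Since the $\indexinterpretationof{\selindex}$ are injective, distinct $\imelementwith$ give distinct tuples. Hence $\kcoreofat{\facesymbol}{\headvariables}$ is a $\{0,1\}$-valued tensor, the indicator of the set of head index tuples that encode points of $\genfaceimset$.

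Next we contract out the head variables. Because the node variables $\headvariables$ are summed, and $\bencodingof{\sstat}$ restricted to $\shortcatvariables=\shortcatindices$ is the single one-hot tensor $\bigotimes_{\selindexin}\onehotmapofat{\invindexinterpretationofat{\selindex}{\sstatcoordinateofat{\selindex}{\shortcatindices}}}{\headvariableof{\selindex}}$, the contraction with $\kcoreofat{\facesymbol}{\headvariables}$ evaluates this indicator at the tuple encoding $\sstatat{\shortcatindices}$. Writing this as an equation gives
\begin{align*}
    \contractionof{\sstatccwith,\kcoreofat{\facesymbol}{\headvariables}}{\indexedshortcatvariables}
    = \sum_{\imelementwith\in\genfaceimset} \prod_{\selindexin} \mathbbm{1}\left[\sstatcoordinateofat{\selindex}{\shortcatindices}=\imelementat{\indexedselvariable}\right] \, .
\end{align*}
By injectivity this equals $1$ exactly when $\sstatat{\shortcatindices}\in\genfaceimset$ and $0$ otherwise.

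Finally we include the base measure. The base measure core carries only the variables $\shortcatvariables$, which remain open, so it simply multiplies the coordinate by $\basemeasureat{\indexedshortcatvariables}$. The resulting coordinate is therefore $\basemeasureat{\indexedshortcatvariables}$ if $\sstatat{\shortcatindices}\in\genfaceimset$ and $0$ otherwise. This matches \defref{def:faceMeasure} case by case, so the two tensors agree.

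The only step requiring care is the first: making precise the identification of $\onehotmapofat{\imelementwith}{\headvariables}$ with a head index tuple, and using injectivity of the enumerations so that no double counting occurs. Everything after that is a direct evaluation of one-hot contractions.
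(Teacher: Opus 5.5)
Your proof is correct and takes essentially the same route as the paper. The paper contracts $\bencodingof{\sstat}$ with each one-hot $\onehotmapofat{\imelementwith}{\headvariables}$ separately to obtain the indicator of the preimage of $\imelementwith$, sums these disjoint indicators, and multiplies by $\basemeasure$; your coordinatewise computation is the same argument with the sum over $\genfaceimset$ and the evaluation at $\shortcatindices$ interchanged, and it additionally spells out, via the enumerations $\invindexinterpretationof{\selindex}$, the encoding of $\imelementwith$ that the paper leaves implicit.
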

\begin{proof}
    For any $\imelementwith\in\genfaceimset$ the tensor
    \begin{align*}
        \hypercoreofat{\imelement}{\shortcatvariables}
        = \contractionof{\sstatccwith,\onehotmapofat{\imelementwith}{\headvariables}}{\shortcatvariables}
    \end{align*}
    is the indicator of the preimage of $\meanparam$ under $\sstatencoding$.
    Since preimages of the elements in $\genfaceimset$ are disjoint, the support of $\hypercoreofat{\imelement}{\shortcatvariables}$ is disjoint and their sum
    \begin{align*}
        \sum_{\imelementwith\in\genfaceimset} \hypercoreofat{\imelement}{\shortcatvariables}
    \end{align*}
    is the indicator of the preimage of $\facesymbol$ under $\sstatencoding$.
    The face measure obeys thus
    \begin{align*}
        \genfacemeasurewith
        &= \contractionof{\left(
                              \sum_{\imelementwith\in\genfaceimset} \hypercoreofat{\imelement}{\shortcatvariables}
        \right), \basemeasurewith}{\shortcatvariables} \\
        &= \sum_{\imelementwith\in\genfaceimset}
        \contractionof{\sstatccwith,\onehotmapofat{\imelementwith}{\headvariables},\basemeasurewith}{\shortcatvariables} \\
        & = \contractionof{\sstatccwith,\kcoreofat{\facesymbol}{\headvariables},\basemeasurewith}{\shortcatvariables} \qedhere
    \end{align*}
\end{proof}

We now investigate the representation of refined base measures by \ComputationActivationNetworks{}.

\begin{definition}
    \label{def:faceRepresentability}
    Let $\graph$ be a hypergraph whose nodes include $[\seldim]$.
    We say that a face $\facesymbol$ is representable by $\graph$ if and only if there is a set $\arbset$ of basis vectors with $\arbset\cap\genimset = \genfaceimset$ and there is a tensor network $\kcoreof{\graph}$ with respect to the hypergraph $\graph$ such that
    \begin{align*}
        \contractionof{\kcoreof{\graph}}{\headvariables}
        = \sum_{\imelement\in\arbset} \onehotmapofat{\imelement}{\headvariables} \, . 
    \end{align*}
    We call any such tensor network a face activating tensor network.
\end{definition}

\begin{lemma}
    \label{lem:faceMeasureRepresentation}
    If and only if a face is representable by a hypergraph $\graph$, we have
    \begin{align*}
        \frac{1}{\contraction{\genfacemeasurewith}} \cdot \genfacemeasurewith
        \in \cansof{\sstat,\graph,\basemeasure} \, .
    \end{align*}
\end{lemma}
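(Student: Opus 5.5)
The plan is to reduce the lemma to \theref{the:faceMeasureCharacterization}, treating the two directions separately.

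\textbf{Forward direction.} Assume $\facesymbol$ is representable by $\graph$. By \defref{def:faceRepresentability} there is a set $\arbset$ of basis vectors with $\arbset\cap\genimset=\genfaceimset$, and a face activating network $\kcoreof{\graph}$ whose contraction is $\sum_{\imelement\in\arbset}\onehotmapofat{\imelement}{\headvariables}$. I take $\hypercoreat{\headvariables}=\contractionof{\kcoreof{\graph}}{\headvariables}$, which lies in $\tnsetof{\graph}$, as the activation tensor. I then repeat the argument in the proof of \theref{the:faceMeasureCharacterization} with $\arbset$ in place of $\genfaceimset$.

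\begin{enumerate}
\item Each summand $\contractionof{\sstatccwith,\onehotmapofat{\imelement}{\headvariables}}{\shortcatvariables}$ is the indicator of the preimage $\sstat^{-1}(\imelement)$. These preimages are disjoint, so the sum over $\imelement\in\arbset$ is the indicator of $\sstat^{-1}(\arbset)$.
\item Contracting additionally with $\basemeasurewith$ removes all states with $\basemeasureat{\indexedshortcatvariables}=0$. Among the remaining states, $\sstatat{\shortcatindices}\in\arbset$ holds exactly when $\sstatat{\shortcatindices}\in\arbset\cap\genimset=\genfaceimset$. Elements of $\arbset$ outside $\genimset$ therefore contribute nothing after the contraction with $\basemeasure$, and we get $\contractionof{\sstatccwith,\hypercoreat{\headvariables},\basemeasurewith}{\shortcatvariables}=\genfacemeasurewith$.
\item The normalisation in the definition of $\cansof{\sstat,\graph,\basemeasure}$ is $\contraction{\hypercoreat{\headvariables},\bencsstatwith,\basemeasurewith}=\contraction{\genfacemeasurewith}$. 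This is nonzero because a face contains at least one vertex of $\genimset$.
\item It follows that the network element, viewed as a distribution relative to $\basemeasure$, is $\genfacemeasurewith/\contraction{\genfacemeasurewith}$. Here I identify elements of $\distmeasuredby{\basemeasure}$ through their weighted version $\probwith\cdot\basemeasurewith$. Where $\basemeasure$ is not Boolean I will need to state this identification explicitly. Under the convention that a Computation-Activation Network is read as a density with respect to $\basemeasure$, this is exactly the convention of the introduction.
\end{enumerate}

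\textbf{Converse.} Assume the normalised face measure equals some network element with activation $\hypercoreat{\headvariables}\in\tnsetof{\graph}$. Contracting with $\sstatccwith$ evaluates $\hypercore$ at $\sstatat{\shortcatindices}$, so the element equals $\hypercoreat{\sstatat{\shortcatindices}}/Z$ on the support of $\basemeasure$.

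\begin{enumerate}
\item Comparing supports gives that $\hypercore$ is nonzero on $\genfaceimset$ and zero on $\genimset\setminus\genfaceimset$.
\item Comparing values (weighted by $\basemeasure$) gives that $\hypercore$ is constant, equal to $c=Z/\contraction{\genfacemeasurewith}$, on $\genfaceimset$.
\item I set $\arbset$ to be the support of $\hypercore$ among the basis vectors. Then $\arbset\cap\genimset=\genfaceimset$, which is the first requirement of \defref{def:faceRepresentability}.
\end{enumerate}

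\textbf{Main obstacle.} The remaining step of the converse is to produce a network on $\graph$ whose contraction is exactly the 0/1 tensor $\sum_{\imelement\in\arbset}\onehotmapofat{\imelement}{\headvariables}$. Off $\genimset$, $\hypercore$ can take arbitrary values, and rescaling by $1/c$ does not make it Boolean.

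\begin{enumerate}
\item My first attempt is to redefine $\arbset:=\genfaceimset$ itself. Then I would need that the 0/1 indicator of $\genfaceimset$, restricted to the realizable vectors, is still representable on $\graph$. One route is to multiply the cores of $\hypercore$ by an indicator of realizability, which needs no extra edges only if that indicator factorises along $\graph$.
\item If that fails, I would fall back on the reading of \defref{def:faceRepresentability} in which a tensor network on $\graph$ may be any element of $\tnsetof{\graph}$ agreeing with the Boolean activation on the image of $\sstat$. Only those values enter $\contractionof{\sstatccwith,\cdot}{\shortcatvariables}$. I would then show that $\tnsetof{\graph}$ is closed under the pointwise modification needed, for instance by absorbing a Boolean support projection into a single core.
\end{enumerate}

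This closure property of $\tnsetof{\graph}$ is where I expect the real difficulty, and possibly an implicit assumption of the statement.
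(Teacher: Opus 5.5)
\paragraph{Forward direction.} This part of your proposal is the paper's proof. You take the contraction of the face activating network as the activation tensor, observe that elements of $\mathcal{U}\setminus\mathcal{N}^{\mathcal{F}}$ do not contribute, and normalise.

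You are more careful than the paper in two places:
\begin{itemize}
\item The paper asserts $\langle\sum_{v\in\mathcal{U}}\epsilon_v,\beta^t\rangle=\langle\sum_{v\in\mathcal{N}^{\mathcal{F}}}\epsilon_v,\beta^t\rangle$ on all states. But $\mathcal{N}$ is only the image under $t$ of the support of $\nu$.
\item The paper's final identity holds literally only where $\nu=1$. The network element equals $\mathbf{1}[t(x)\in\mathcal{N}^{\mathcal{F}}]/\langle\nu^{t,\mathcal{F}}\rangle$ on the support of $\nu$, and may be nonzero off it.
\end{itemize}
Contracting with $\nu$ first and reading the network as a density with respect to $\nu$, as you do, is the right repair.

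\paragraph{Converse.} Here your proposal has a gap, and it cannot be closed. The paper's proof only treats the \emph{if} direction, which is also all that Theorem~\ref{the:MAINgenMaxEntChar} uses. The \emph{only if} direction is false in general, for exactly the reason you isolate: membership constrains the activation only on $\mathcal{N}$, whereas representability demands a Boolean contraction on $\mathcal{G}$.

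Here is a counterexample.
\begin{itemize}
\item Take one categorical variable with ten states and $\nu\equiv 1$. Let $t$ map the states bijectively onto
$\mathcal{N}=\{(s,s): s=0,\dots,4\}\cup\{(1,0),(2,1),(3,2),(4,3),(4,0)\}\subset\mathbb{R}^2$.
\item Both coordinates have image $\{0,\dots,4\}$, so head indices coincide with coordinate values.
\item The normal $(-1,1)$ exposes the face with $\mathcal{N}^{\mathcal{F}}=\{(s,s)\}$: the diagonal points have value $0$, all other points negative values.
\item Let $\mathcal{G}$ be the $\mathrm{CP}$ hypergraph with hidden rank $2$. Its contractions are exactly the $5\times 5$ matrices of rank at most $2$.
\end{itemize}
Consider the matrix
\[
\tau=\begin{pmatrix}
1&0&1&1&0\\
0&1&-1&0&1\\
1&0&1&1&0\\
1&1&0&1&1\\
0&1&-1&0&1
\end{pmatrix}.
\]
Rows $0,2$ coincide, rows $1,4$ coincide, and row $3$ is the sum of rows $0$ and $1$, so $\tau$ has rank $2$. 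It equals $1$ on $\mathcal{N}^{\mathcal{F}}$ and $0$ on $\mathcal{N}\setminus\mathcal{N}^{\mathcal{F}}$. Hence the normalised face measure lies in $\Lambda^{t,\mathcal{G},\nu}$.

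Now let $B$ be a $0/1$ matrix of rank at most $2$ with unit diagonal. A two-dimensional subspace contains at most three nonzero $0/1$ vectors, and any three have the form $x,y,x+y$ with disjoint supports. So either $B$ has at most two distinct rows, or exactly the rows $x,y,x+y$. Colour $s$ by its row in the first case, and by which of $\mathrm{supp}\,x$, $\mathrm{supp}\,y$ contains column $s$ in the second. One checks that $B_{st}=0$ then forces $s$ and $t$ to receive different colours.

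The required zeros at $(1,0),(2,1),(3,2),(4,3),(4,0)$ form the odd cycle $(0,1,2,3,4)$, which has no proper $2$-colouring. So no $\mathcal{U}$ with $\mathcal{U}\cap\mathcal{N}=\mathcal{N}^{\mathcal{F}}$ has an indicator of rank at most $2$, and $\mathcal{F}$ is not representable by $\mathcal{G}$.

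\paragraph{What to do instead.} State and prove only the \emph{if} direction, or add a hypothesis under which your support argument goes through:
\begin{itemize}
\item If every head index tuple is realised on the support of $\nu$, then $\tau=c\cdot\mathbf{1}_{\mathcal{N}^{\mathcal{F}}}$ everywhere with $c\neq 0$. Rescaling one core gives a face activating network.
\item If $\mathcal{G}$ has no hidden nodes (for example the elementary graph), the contraction is a pointwise product of the cores. Replacing each core by the indicator of its support yields a Boolean network with $\mathcal{U}=\mathrm{supp}\,\tau$.
\end{itemize}
Your second route does not prove the lemma. It changes Definition~\ref{def:faceRepresentability}, and under that changed definition the equivalence reduces to a rescaling.
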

\begin{proof}
    For any $\arbset$ with $\arbset \wcols \arbset\cap\genimset = \genfaceimset$ and tensor network $\kcoreof{\graph}$ respecting the assumptions of \defref{def:faceRepresentability} we have
    \begin{align*}
        \contractionof{\contractionof{\kcoreof{\graph}}{\headvariables},\bencsstatwith}{\shortcatvariables}
        &= \contractionof{\sum_{\imelement\in\arbset}\onehotmapofat{\imelement}{\headvariables},\bencsstatwith}{\shortcatvariables} \\
        &= \contractionof{\sum_{\imelement\in\genfaceimset}\onehotmapofat{\imelement}{\headvariables},\bencsstatwith}{\shortcatvariables} \, .
    \end{align*}
    Here we used in the second equation that only the vertices in $\genimset $ are in the image of $\sstat$.
    It follows, that
    \begin{align*}
        \frac{1}{\contraction{\genfacemeasurewith}} \cdot \genfacemeasurewith
        = \frac{
            \contractionof{\contractionof{\kcoreof{\graph}}{\headvariables},\bencsstatwith}{\shortcatvariables}
        }{
            \contraction{\contractionof{\kcoreof{\graph}}{\headvariables},\bencsstatwith,\basemeasurewith}
        }
    \end{align*}
    and thus $\genfacemeasureat{\shortcatvariables|\varnothing} \in \cansof{\sstat,\graph,\basemeasure}$.
\end{proof}

\input{examples/mean-polytope/vertices}

While vertices are the minimal non-vanishing faces in the face-lattice (see \cite{ziegler_lectures_2013}), we now show that the maximal face, namely the polytope itself, is also representable with respect to the elementary hypergraph $\elgraph$ (see \figref{fig:elFormat}).

\input{examples/mean-polytope/maximal-face}

\begin{figure}
    \input{tikz/mean-polytope/el-format}
    \caption{Elementary tensor format consisting of the elementary hypergraph $\elformat$ $a)$, and corresponding tensor networks $b)$ being the tensor products of vectors.}
    \label{fig:elFormat}
\end{figure}

\subsection{Representation in the $\cpformat$ format}

Extending \exaref{exa:vertexMeasures} and \exaref{exa:maximalFaceMeasure}, we can provide a coarse estimation of the hypergraph $\graph$ required to decompose $\kcoreof{\facesymbol}$ for generic faces $\facesymbol$.
To this end, we use the $\cpformat$ format describing tensor networks on the $\cpformat$ hypergraph (see \figref{fig:cpFormat})
\begin{align*}
    \cpformat
    = \big([\seldim]\cup\{\decvariable\},\{\{\selindex,\decvariable\}\wcols\selindexin\}\big) \, .
\end{align*}
The dimension of the variable $\decvariable$ is called the hidden rank of the format.

\begin{lemma}
    \label{lem:CPfaceRepresentationBound} 
    Any face $\facesymbol$ is representable by a $\cpformat$ graph with hidden rank
    \begin{align*}
        r
        \leq \min\left(\cardof{\genfaceimset},\cardof{\genimset}-\cardof{\genfaceimset}+1\right) \, .
    \end{align*}
\end{lemma}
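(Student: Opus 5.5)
We need two CP constructions, one for each term of the minimum, and the bound is their minimum. Throughout, an element $\imelement\in\genimset$ is identified with the index tuple of $\headvariables$ obtained through the enumerations $\invindexinterpretationof{\selindex}$, so each $\onehotmapofat{\imelement}{\headvariables}$ is an elementary tensor $\bigotimes_{\selindexin}\onehotmapofat{\imelement_\selindex}{\headvariableof{\selindex}}$. A tensor network on $\cpformat$ with hidden rank $r$ contracts to a sum of $r$ elementary tensors $\sum_{\decindex\in[r]}\bigotimes_{\selindexin}\hypercoreofat{\selindex}{\headvariableof{\selindex},\decvariable=\decindex}$. Hence representing the face with hidden rank $r$ amounts to writing $\sum_{\imelement\in\arbset}\onehotmapofat{\imelement}{\headvariables}$ as a sum of $r$ elementary tensors for some $\arbset$ with $\arbset\cap\genimset=\genfaceimset$.

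First bound. Choose $\arbset=\genfaceimset$. Then $\kcoreof{\facesymbol}=\sum_{\imelement\in\genfaceimset}\onehotmapofat{\imelement}{\headvariables}$ is a sum of $\cardof{\genfaceimset}$ elementary tensors. Concretely, let $\decvariable$ enumerate $\genfaceimset$ and set $\hypercoreofat{\selindex}{\headvariableof{\selindex},\decvariable=\decindex}=\onehotmapofat{\imelement^{\decindex}_\selindex}{\headvariableof{\selindex}}$. This generalizes the vertex case of \exaref{exa:vertexMeasures}.

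Second bound. Take $\arbset$ to be the set of all basis vectors except those in $\genimset\setminus\genfaceimset$. Then $\arbset\cap\genimset=\genfaceimset$ holds, and
\begin{align*}
    \sum_{\imelement\in\arbset}\onehotmapofat{\imelement}{\headvariables}
    = \onesat{\headvariables} - \sum_{\imelement\in\genimset\setminus\genfaceimset}\onehotmapofat{\imelement}{\headvariables} \, .
\end{align*}
The all-ones tensor is elementary, as in \exaref{exa:maximalFaceMeasure}. Each subtracted one-hot tensor is elementary as well, since the sign can be absorbed into one leg, e.g.\ the $\selindex=0$ factor. This gives $1+\cardof{\genimset}-\cardof{\genfaceimset}$ elementary summands. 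As before, assign one slice of $\decvariable$ to each summand.

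Taking whichever construction has the smaller rank yields the claimed minimum; a CP network of smaller rank can always be padded to any larger rank with zero slices if needed.

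The main subtlety is the second construction. It uses the freedom in \defref{def:faceRepresentability} that $\arbset$ may contain basis vectors outside $\genimset$, and it requires that $\onesat{\headvariables}$ minus finitely many one-hots is a valid activation. Both are permitted by the definition. By \lemref{lem:faceMeasureRepresentation}, it then suffices that $\arbset\cap\genimset=\genfaceimset$, which holds.
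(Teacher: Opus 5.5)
Your proof is correct and follows the paper's argument. It uses the same two $\cpformat$ constructions: one enumerates $\genfaceimset$ with one-hot legs, and the other writes $\onesat{\headvariables}$ minus the one-hot encodings of $\genimset\setminus\genfaceimset$, using the fact that $\arbset$ may contain basis vectors outside $\genimset$. Your choice to absorb the minus sign into a single leg is in fact the right one. The paper places $-\onehotmap$ on every leg, which produces a factor $(-1)^{\seldim}$ and so gives the wrong sign when $\seldim$ is even.
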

\begin{proof}
    We show the claim by constructing two face activating tensor networks to $\facesymbol$ in a $\cpformat$ hypergraph with hidden rank $\cardof{\genfaceimset}$ and in a $\cpformat$ hypergraph with hidden rank $\cardof{\genimset}-\cardof{\genfaceimset}+1$.
    To show the first representation we enumerate the vertices by a variable $\decvariable$ with dimension $r=\cardof{\genfaceimset}$, i.e. $\genfaceimset = \{v^{\decindex}[\selvariable] \wcols \decindexin\}$.
    Then we define for $\selindexin$ core tensors $\hypercoreofat{\selindex}{\headvariableof{\selindex},\decvariable}$
    \begin{align*}
        \hypercoreofat{\selindex}{\headvariableof{\selindex},\indexeddecvariable} = \onehotmapofat{\imelementofat{\decindex}{\indexedselvariable}}{\headvariableof{\selindex}} \, .
    \end{align*}
    Then we have
    \begin{align*}
        \contractionof{\{\hypercoreofat{\selindex}{\headvariableof{\selindex},\decvariable}\wcols\selindexin\}}{\headvariables}
        =\sum_{\imelement\in\genfaceimset} \onehotmapofat{\imelement}{\headvariables}
    \end{align*}
    and thus have found an activation tensor network in a $\cpformat$ graph with hidden rank $\cardof{\genfaceimset}$ representing the face $\facesymbol$.

    We continue with the second representation, for which we enumerate the set $\genimset/\genfaceimset$ by $\imelementofat{\decindex}{\selvariable}$ where $\decindex\in[\cardof{\genimset}-\cardof{\genfaceimset}]$.
    We define variable $\decvariable$ with dimension $r=\cardof{\genimset}-\cardof{\genfaceimset}+1$ and define for $\selindexin$ core tensors
    \begin{align*}
        \hypercoreofat{\selindex}{\headvariableof{\selindex},\indexeddecvariable}
        =
        \begin{cases}
            -\onehotmapofat{\imelementofat{\decindex}{\indexedselvariable}}{\headvariableof{\selindex}} & \ifspace \decindex < \cardof{\genimset}-\cardof{\genfaceimset} \\
            \onesat{\headvariableof{\selindex}} & \ifspace \decindex = \cardof{\genimset}-\cardof{\genfaceimset}
        \end{cases} \, .
    \end{align*}
    We then have
    \begin{align*}
        \contractionof{\{\hypercoreofat{\selindex}{\headvariableof{\selindex},\decvariable}\wcols\selindexin\}}{\headvariables}
        &=\onesat{\headvariables} - \sum_{\imelement\in\genimset/\genfaceimset} \onehotmapofat{\imelement}{\headvariables} \\
        &= \sum_{\imelement\in\genfaceimset} \onehotmapofat{\imelement}{\headvariables}
        + \sum_{\imelement\in\left(\bigtimes_{\selindexin}[\seldimof{\selindex}]\right)/\genimset} \onehotmapofat{\imelement}{\headvariables} \, .
    \end{align*}
    We have thus found a face activating tensor network for $\facesymbol$ in a $\cpformat$ format with hidden rank $r=\cardof{\genimset}-\cardof{\genfaceimset}+1$.
\end{proof}

We notice that the $\cpformat$ rank bound of \lemref{lem:CPfaceRepresentationBound} is tight in \exaref{exa:vertexMeasures} and \exaref{exa:maximalFaceMeasure}, since the elementary format is a restriction of the $\cpformat$ format to rank $1$.

\begin{figure}
    \input{tikz/mean-polytope/cp-format}
    \caption{The $\cpformat$ tensor format consist of the $\cpformat$ hypergraph a) and decorating tensor networks b).
    Tensors in this format constructed as in \lemref{lem:CPfaceRepresentationBound} represent arbitrary faces.}
    \label{fig:cpFormat}
\end{figure}

\subsection{Examples of elementarily representable faces}

We show in the following example that all faces of a hypercube and a simplex are representable by elementary activation tensors.

\input{examples/mean-polytope/hypercube-faces}

\input{examples/mean-polytope/simplex-faces}

\subsection{Partition into relative interiors of faces}

Let us now introduce relative interiors, which enable us to find disjoint partitions of the mean polytope.

\begin{definition}[Relative Interior]
    \label{def:relativeInterior}
    The relative interior $\sbinteriorof{\arbset}$ of an arbitrary set $\arbset\subset\parspace$ is the interior of $\arbset$ in the affine hull of $\arbset$. 
\end{definition}

\begin{lemma}
    \label{lem:faceToMean}
    To each $\meanparam\in\genmeanset$ there is a unique face $\facesymbol$, which we denote by $\faceto{\meanparam}$, such that $\meanparam\in\sbinteriorof{\facesymbol}$.
\end{lemma}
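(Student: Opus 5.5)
The plan is to prove existence by picking the smallest face containing $\meanparam$, and uniqueness by showing that two distinct faces have disjoint relative interiors. Throughout, the faces are those of the finite polytope $\genmeanset=\convhullof{\genimset}$. By the definition, each face is $\convhullof{\genfaceimset}$, where $\genfaceimset$ is the argmax set of a linear functional $\canparamofat{\facesymbol}{\selvariable}$ over $\genimset$.

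\emph{Existence.} The polytope itself is a face, taking $\canparamofat{\facesymbol}{\selvariable}=0$ so that the argmax is all of $\genimset$. Hence the set of faces containing $\meanparam$ is nonempty. Since the face set is a finite lattice under inclusion, I take $\facesymbol$ to be the intersection (meet) of all faces containing $\meanparam$; this is the minimal such face. I then need $\meanparam\in\sbinteriorof{\facesymbol}$, and I argue by contradiction. Suppose $\meanparam$ lies on the relative boundary of $\facesymbol$. A supporting hyperplane argument inside the affine hull of $\facesymbol$ gives a vector $\tilde\theta$ which is maximized over $\facesymbol$ at $\meanparam$ but is not constant on $\facesymbol$. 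Consider the combined normal $\canparamofat{\facesymbol}{\selvariable}+\epsilon\tilde\theta$ for small $\epsilon>0$. Because $\genimset$ is finite, for small $\epsilon$ its argmax over $\genimset$ equals the argmax of $\tilde\theta$ over $\genfaceimset$. This defines a face $\secfacesymbol\subsetneq\facesymbol$ with $\meanparam\in\secfacesymbol$, since $\meanparam$ is a convex combination of points of $\genfaceimset$, all of which must attain the maximum of $\tilde\theta$. That contradicts minimality.

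\emph{Uniqueness.} Let $\meanparam\in\sbinteriorof{\facesymbol}\cap\sbinteriorof{\secfacesymbol}$. I will show $\facesymbol\subseteq\secfacesymbol$; symmetry then gives equality. Write $\meanparam$ as a convex combination of points of $\facesymbol$. Because $\meanparam$ is in the relative interior, these weights can be chosen strictly positive on every element of $\genfaceimset$. Now apply the normal $\canparamofat{\secfacesymbol}{\selvariable}$. The point $\meanparam$ lies in $\secfacesymbol$ and so attains the maximum of this functional. A strictly positive convex combination attains the maximum only if every term does. Hence every element of $\genfaceimset$ lies in the argmax set $\secfacesymbol$ restricted to $\genimset$, so $\facesymbol\subseteq\secfacesymbol$.

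The main obstacle is the existence step: producing the refining normal $\tilde\theta$ and checking that the perturbed argmax is exactly the sub-face. The key point is the finiteness of $\genimset$, which yields a positive gap between $\max\canparamofat{\facesymbol}{\selvariable}$ and its value on $\genimset\setminus\genfaceimset$.
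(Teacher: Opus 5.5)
Your proof is correct, and it is more complete than the paper's. The paper proves only existence, by iterative descent: starting from $\mathcal{M}$, as long as $\mu$ is not in the relative interior of the current face, it passes to a face of strictly lower affine dimension containing $\mu$. Termination follows because the dimension drops and $0$-dimensional faces equal their relative interiors. Two things are left unproved there. The dichotomy driving the descent (either $\mu$ is relatively interior, or a lower-dimensional subface contains it) is simply asserted. So is the tacit fact that a face of a face is again a face of $\mathcal{M}$ in the argmax-over-$\mathcal{N}$ sense. Uniqueness, although part of the statement, is not addressed at all.

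You replace the descent by an inclusion-minimal face containing $\mu$; any minimal element of the finite set of such faces would do, so the meet is not needed. You then prove the dichotomy via the supporting hyperplane in the affine hull and the perturbed normal $\theta^{\mathcal{F}}+\varepsilon\tilde\theta$, with finiteness of $\mathcal{N}$ supplying the gap. This is precisely the ``faces of faces are faces'' step the paper leaves implicit. Your uniqueness argument via a strictly positive convex combination is the standard one and fills the paper's omission.

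Two small facts deserve a line each:
\begin{enumerate}
\item $\tilde{\mathcal{F}}\subsetneq\mathcal{F}$. A point of $\mathcal{N}^{\mathcal{F}}$ where $\tilde\theta$ is not maximal lies in $\mathcal{F}$ but not in $\tilde{\mathcal{F}}$. This needs saying because $\mathcal{N}$ may contain non-extreme points, so dropping points from $\mathcal{N}^{\mathcal{F}}$ does not by itself shrink the hull.
\item A relative interior point of $\mathrm{conv}(\mathcal{N}^{\mathcal{F}})$ admits strictly positive weights on all of $\mathcal{N}^{\mathcal{F}}$. To see this, write $\mu$ as a convex combination of the barycenter of $\mathcal{N}^{\mathcal{F}}$ and a point of $\mathcal{F}$ slightly beyond $\mu$ on the ray from the barycenter.
\end{enumerate}

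The paper's version is shorter and leans on standard polytope theory. Yours is self-contained, fits the paper's argmax definition of faces, and actually proves the uniqueness the lemma claims.
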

\begin{proof}
    For any mean vector $\meanparam$ on a face $\facesymbol$ we either have $\meanparam\in\sbinteriorof{\facesymbol}$ or there is a face $\secfacesymbol$ of lower affine dimension with $\secfacesymbol\subset\facesymbol$ and $\meanparam\in\secfacesymbol$.
    Starting from the face $\genmeanset$ (for which by assumption $\meanparam\in\genmeanset$) we iteratively choose a face $\facesymbol$ of lower dimension containing $\meanparam$ until we reach a face with $\meanparam\in\sbinteriorof{\facesymbol}$.
    This procedure is guaranteed to terminate, since the affine dimension is decreasing and any face of affine dimension $0$ coincides with its relative interior.
\end{proof}

%
%

%% file: examples/mean-polytope/vertices.tex
\begin{example}[Vertices]
    \label{exa:vertexMeasures}
    Vertices $\facesymbol=\{\imelementwith\}$ are proper faces of affine dimension $0$, that is they consist of single vectors $\imelementwith$.
    Since each vertex is in the image $\genimset$, there exists an index tuple $\shortcatindices\in\stateset$ such that $\basemeasureat{\indexedshortcatvariables}\neq1$ and
    \begin{align*}
        \imelementwith
        = \sencsstatat{\indexedshortcatvariables,\selvariable} \, .
    \end{align*}
    Then $\kcoreofat{\facesymbol}{\headvariables}$ is the one-hot encoding of the vector $\sencsstatat{\indexedshortcatvariables,\selvariable}$, that is
    \begin{align*}
        \kcoreofat{\facesymbol}{\headvariables}
        = \onehotmapofat{\sencsstatat{\indexedshortcatvariables,\selvariable}}{\headvariables}
        = \contractionof{\{\onehotmapofat{\sencsstatat{\indexedshortcatvariables,\indexedselvariable}}{\headvariableof{\selindex}}\wcols\selindexin\}}{\headvariables}\, .
    \end{align*}
    Such tensors are examples of elementary tensors corresponding to the elementary hypergraph (see \figref{fig:elFormat})
    \begin{align*}
        \elformat
        = \big([\seldim],\{\{\selindex\}\wcols\selindexin\}\big) \, .
    \end{align*}
    In particular, the activation core is elementary and the corresponding refined base measure is in $\realizabledistsof{\sstat,\elgraph,\basemeasure}$.
\end{example}

%% file: examples/mean-polytope/maximal-face.tex
\begin{example}[Maximal face]
    \label{exa:maximalFaceMeasure}
    The maximal face $\facesymbolof{\varnothing}=\genmeanset$ coincides with the mean polytope itself and is given by the choice $\canparamofat{\varnothing}{\selvariable}=\zerosat{\selvariable}$.
    In this case the corresponding activation tensor to the face measure is
    \begin{align*}
        \kcoreofat{\varnothing}{\headvariables}
        = \onesat{\headvariables}
        = \bigotimes_{\selindexin}\onesat{\headvariableof{\selindex}} \, ,
    \end{align*}
    where by $\onesat{\headvariables}$ we denote tensors with all coordinates $1$.
    $\kcoreof{\varnothing}{\headvariables}$ is elementary and the maximal face is always representable by an elementary activation.
\end{example}

%% file: tikz/mean-polytope/el-format.tex
\stantikzpic{

    \drawtextnode{-2}{3}{$a)$}

    \drawvariablenode{0}{0}{$\headvariableof{0}$}{X0}
    \drawvariablenode{3}{0}{$\headvariableof{1}$}{X1}
    \drawtextnode{6}{0}{$\cdots$}
    \drawvariablenode{9}{0}{$\headvariableof{\seldim-1}$}{Xdmin1}

    \begin{scope}[shift={(15,-2)}]
        \drawtextnode{-3}{5}{$b)$}

        \drawtensorblock{0}{2.5}{1}{$\hypercoreof{0}$}
        \drawvwire{0}{1.5}{$\headvariableof{0}$}
        \drawtensorblock{3}{2.5}{1}{$\hypercoreof{1}$}
        \drawvwire{3}{1.5}{$\headvariableof{1}$}

        \drawtextnode{6}{2.5}{$\cdots$}

        \drawtensorblock{9}{2.5}{1}{$\hypercoreof{\seldim\shortminus1}$}
        \drawvwire{9}{1.5}{$\headvariableof{\seldim\shortminus1}$}

    \end{scope}

}

%% file: tikz/mean-polytope/cp-format.tex
\stantikzpic{

    \drawtextnode{0}{5}{$a)$}

    \drawvariablenode{0}{0}{$\headvariableof{0}$}{X0}
    \drawvariablenode{3}{0}{$\headvariableof{1}$}{X1}
    \drawtextnode{6}{0}{$\cdots$}
    \drawvariablenode{9}{0}{$\headvariableof{\seldim-1}$}{Xdmin1}

    \drawvariablenode{4.5}{5}{$\decvariable$}{I}

    \draw (X0) -- (I);
    \draw (X1) -- (I);
    \draw (Xdmin1) -- (I);

    \begin{scope}[shift={(15,0)}]
        \drawtextnode{-3}{5}{$b)$}

        \drawtensorblock{0}{2.5}{1}{$\hypercoreof{0}$}
        \drawvwire{0}{1.5}{$\headvariableof{0}$}
        \drawtensorblock{3}{2.5}{1}{$\hypercoreof{1}$}
        \drawvwire{3}{1.5}{$\headvariableof{1}$}

        \drawtextnode{6}{2.5}{$\cdots$}

        \drawtensorblock{9}{2.5}{1}{$\hypercoreof{\seldim\shortminus1}$}
        \drawvwire{9}{1.5}{$\headvariableof{\seldim\shortminus1}$}

        \drawvariabledot{4.5}{6}
        \drawtextnode{5}{6.5}{\colorlabelsize $\decvariable$}
        \draw (0,3.5) to[bend right=-20] (4.5,6);
        \draw (3,3.5) to[bend right=-10] (4.5,6);
        \draw (9,3.5) to[bend right=20] (4.5,6);
    \end{scope}

}

%% file: examples/mean-polytope/hypercube-faces.tex
\begin{example}[Hypercube]
    \label{exa:hypercubeFaces}
    In cases where $\shortcatvariables$ are Boolean and we have $\selindexin$ features
    \begin{align*}
        \formulaofat{\selindex}{\indexedshortcatvariables} = \catindexof{\selindex}
    \end{align*}
    the mean polytope is the hypercube
    \begin{align*}
        \meansetof{\{\formulaof{\selindex}\wcols\selindexin\},\trivbm} = \fullparcube \, .
    \end{align*}
    The non-empty faces in the face lattice $\facelatticeof{\fullparcube}$ can be enumerated by subsets $\arbset\subset[\seldim]$ and indices $\headindexof{\arbset}\in\bigtimes_{\selindex\in\arbset}[2]$ and represented by the cartesian products
    \begin{align*}
        \facesymbolof{(\arbset,\headindexof{\arbset})}
        = \bigtimes_{\selindexin} \mathcal{I}^{l,(\arbset,\headindexof{\arbset})}
    \end{align*}
    where
    \begin{align*}
        \mathcal{I}^{l,(\arbset,\headindexof{\arbset})}
        = \begin{cases}
        [0,1]
              & \ifspace \selindex\notin\arbset \\
              \{\headindexof{\selindex}\}& \ifspace \selindex\in\arbset
        \end{cases} \, .
    \end{align*}
    Each of these faces can be represented with respect to the elementary graph $\elgraph$, namely by the tensor product of leg vectors
    \begin{align*}
        \hardactlegwith
        = \begin{cases}
              \onesat{\headvariableof{\selindex}} & \ifspace \selindex\notin\arbset \\
              \onehotmapofat{\headindexof{\selindex}}{\headvariableof{\selindex}}& \ifspace \selindex\in\arbset
        \end{cases} \, .
    \end{align*}
\end{example}

%% file: examples/mean-polytope/simplex-faces.tex
\begin{example}[Simplices]
    \label{exa:simplexFaces}
    The $(\seldim-1)$-dimensional standard simplex $\meansetof{\triangle,\seldim-1}$ in $[0,1]^{\seldim}$ is the convex hull of the vertex sets
    \begin{align*}
        \imset = \{\onehotmapofat{\selindex}{\selvariable} \wcols\selindexin\} \, .
    \end{align*}
    The simplex is the mean polytope of the $\seldim$-dimensional statistic with
    \begin{align*}
        \sstatcoordinateof{\selindex} \defcols [\seldim] \rightarrow [2] \quad, \quad
        \sstatcoordinateofat{\selindex}{\secselindex}
        = \begin{cases}
              1 & \ifspace \secselindex = \selindex \\
              0 & \ifspace \secselindex \neq \selindex
        \end{cases} \, .
    \end{align*}
    The simplex represents all distributions with respect to the trivial base measure $\trivbm$ which coordinates are all $1$.
    When flattened to a vector, any tensor in $\distmeasuredby{\ones}$ is an element of $\meansetof{\triangle,\seldim-1}$ where $\seldim=\prod_{\catenumeratorin}\catdimof{\catenumerator}$.

    The face lattice of the simplex is enumerated by subsets of $[\seldim]$ as
    \begin{align*}
        \facelatticeof{\meansetof{\triangle,\seldim-1}}
        = \{\facesymbolof{\arbset}\wcols\arbset\subset[\seldim]\}
    \end{align*}
    where the faces are
    \begin{align*}
        \facesymbolof{\arbset}
        = \convhullof{\onehotmapofat{\selindex}{\selvariable} \wcols\selindex\in\arbset}
    \end{align*}
    and are therefore themselves $(\cardof{\arbset}-1)$-dimensional standard simplices.
    The partial order of the faces coincides with the inclusion order of subsets $\arbset$.
\end{example}

%% file: sections/exp-family.tex
\section{Exponential families}\label{sec:expFamily}

It is well known that exponential families are specific maximum entropy distributions.
However, not every maximum entropy distribution is an exponential family member.
To prepare for the characterization of generic maximum entropy distributions, we will in this section represent exponential family members as \ComputationActivationNetwork{}s.
We further show that a maximum entropy distribution is an exponential family member if and only if its mean parameter is in the relative interior of the mean polytope.

\subsection{Representation as elementary \ComputationActivationNetworks{}}

Let us first define exponential families as classes of probability distributions and then derive their representation by elementary \ComputationActivationNetworks{}.

\begin{definition}[Exponential Family]
    \label{def:expFamily}
    Given a statistic function
    \begin{align*}
        \sstat \defcols \facstates \rightarrow \parspace
    \end{align*}
    and a base measure $\basemeasurewith$ with $\contraction{\basemeasure}\neq0$, the set $\expfamily=\{\expdist\wcols\canparamwithin\}\subset\distmeasuredby{\basemeasure}$ of probability distributions
    \begin{align*}
        \expdistat{\shortcatvariables}
        = \frac{
            \expof{\contractionof{\sencsstatat{\shortcatvariables,\selvariable},\canparamwith}{\shortcatvariables}}
        }{
            \contraction{\expof{\contractionof{\sencsstatat{\shortcatvariables,\selvariable},\canparamwith}{\shortcatvariables}},\basemeasurewith}
        }
    \end{align*}
    is called the exponential family with respect to $\sstat$.
\end{definition}

To connect with standard notation of exponential families we notice that the coordinate of any distribution $\expdistat{\shortcatvariables}$ at some state $\shortcatindicesin$ is
\begin{align*}
    \expdistat{\indexedshortcatvariables}
    = \expof{\sum_{\selindexin}\weightparamat{\indexedselvariable}\cdot\sstatcoordinateofat{\selindex}{\shortcatindices} - \psi_{\sstat,\basemeasure}(\weightparam)}
\end{align*}
where
\begin{align*}
    \psi_{\sstat,\basemeasure}(\weightparam)
    = \lnof{\contraction{\expof{\contractionof{\sencsstatat{\shortcatvariables,\selvariable},\canparamwith}{\shortcatvariables}},\basemeasurewith}}
\end{align*}
is the cumulant generating function.
The function $\psi_{\sstat,\basemeasure}(\weightparam)$ and its Legendre transform $\psi^*_{\sstat,\basemeasure}(\meanparam)$ define the dually flat Riemannian structure on the exponential family manifold \cite{amari_information_2016}.
We now continue with the notation of \defref{def:expFamily} to find representations of exponential families by elementary \ComputationActivationNetworks{}.

\begin{theorem}[Exponential Families are elementary \ComputationActivationNetworks{}]
    \label{the:expFamilyTensorRep}
    Given any base measure $\basemeasure$ and a sufficient statistic $\sstat$ we enumerate for each coordinate $\selindexin$ the image $\imageof{\sstatcoordinateof{\selindex}}$ by a variable $\headvariableof{\selindex}$ taking values in $[\cardof{\imageof{\sstatcoordinateof{\selindex}}}]$, given an interpretation map
    \begin{align*}
        \indexinterpretationof{\selindex} \defcols
        [\cardof{\imageof{\sstatcoordinateof{\selindex}}}] \rightarrow \imageof{\sstatcoordinateof{\selindex}} \, .
    \end{align*}
    For any canonical parameter vector $\canparamwithin$ we build the activation cores $\softactlegwith$ for each coordinate $\headindexof{\selindex}\in[\cardof{\imageof{\sstatcoordinateof{\selindex}}}]$ by
    \begin{align*}
        \softactleg\left[\indexedheadvariableof{\selindex}\right]
        = \expof{\canparamat{\indexedselvariable} \cdot \indexinterpretationofat{\selindex}{\headindexof{\selindex}} } \,
    \end{align*}
    and have (see \figref{fig:expdistUnaryRealizable})
    \begin{align*}
        \expdistat{\shortcatvariables}
        = \frac{
            \contractionof{\{\bencsstatwith\}\cup\{\softactlegwith \wcols \selindexin\}}{\shortcatvariables}
        }{
            \contraction{\{\bencsstatwith\}\cup\{\softactlegwith \wcols \selindexin\}\cup\{\basemeasurewith\}}
        } \, .
    \end{align*}
\end{theorem}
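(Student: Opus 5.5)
The plan is to prove the identity coordinatewise: evaluate the numerator contraction at an arbitrary state $\shortcatindicesin$, show that it equals the unnormalized exponential weight $\expof{\sum_{\selindexin}\weightparamat{\indexedselvariable}\cdot\sstatcoordinateofat{\selindex}{\shortcatindices}}$, and then treat the denominator as the same contraction additionally summed against the base measure.

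For the numerator, fix $\shortcatindices$ and insert the definition of the basis encoding $\sstatccwith$. The term $\onehotmapofat{\shortcatindices}{\shortcatvariables}$ selects a single summand, so the contraction over the head variables $\headvariables$ reduces to
\begin{align*}
    \sum_{\headindexof{[\seldim]}} \prod_{\selindexin} \onehotmapofat{\invindexinterpretationofat{\selindex}{\sstatcoordinateofat{\selindex}{\shortcatindices}}}{\indexedheadvariableof{\selindex}} \cdot \softactleg\left[\indexedheadvariableof{\selindex}\right] \, .
\end{align*}
Since the network is elementary in the head variables, this sum factorizes into a product over $\selindexin$ of one-dimensional sums. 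Each one-hot vector fixes $\headindexof{\selindex}=\invindexinterpretationofat{\selindex}{\sstatcoordinateofat{\selindex}{\shortcatindices}}$, which is well defined because $\indexinterpretationof{\selindex}$ is a bijection onto $\imageof{\sstatcoordinateof{\selindex}}$. The $\selindex$-th factor is therefore $\expof{\canparamat{\indexedselvariable}\cdot\indexinterpretationofat{\selindex}{\invindexinterpretationofat{\selindex}{\sstatcoordinateofat{\selindex}{\shortcatindices}}}} = \expof{\canparamat{\indexedselvariable}\cdot\sstatcoordinateofat{\selindex}{\shortcatindices}}$. Taking the product of the exponentials turns it into the exponential of the sum, which is exactly $\expof{\contractionof{\sencsstatat{\shortcatvariables,\selvariable},\canparamwith}{\shortcatvariables}}$ evaluated at $\shortcatindices$.

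For the denominator, contracting the same network together with $\basemeasurewith$ over all variables gives $\sum_{\shortcatindices}\basemeasureat{\indexedshortcatvariables}$ times the numerator coordinate just computed. This is precisely the normalization in \defref{def:expFamily}. It is strictly positive because exponentials are positive and $\basemeasure$ is non-negative and non-vanishing, so the quotient is well defined and equals $\expdist$.

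The main point needing care is the bookkeeping in the first step. One must interchange the sum over $\headvariables$ with the product over legs, and the elementary structure justifies this. One must also verify that the interpretation maps cancel correctly, which relies on injectivity of $\indexinterpretationof{\selindex}$ together with surjectivity onto the image. Apart from this, the argument is a direct calculation.
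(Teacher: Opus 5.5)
Your proposal is correct and follows essentially the same route as the paper: evaluate the numerator at each state, factorize the head-variable contraction into a product over the legs, cancel the interpretation maps to obtain the product of the factors $\exp[\theta_\ell \cdot t_\ell(x)]$, and note that the denominator is the same contraction additionally weighted by the base measure. Your explicit remarks on the bijectivity of the interpretation maps and on the positivity of the normalizer are small additions that the paper leaves implicit.
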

\begin{proof} 
    For each $\shortcatindicesin$ we have
    \begin{align*}
        &\contractionof{\{\bencsstatwith\}\cup\{\softactlegwith \wcols \selindexin\}}{\indexedshortcatvariables} \\
        &\quad = \prod_{\selindexin} \contraction{\bencodingofat{\sstatcoordinateof{\selindex}}{\headvariableof{\selindex},\indexedshortcatvariables},\softactlegwith} \\
        &\quad = \prod_{\selindexin} \expof{\canparamat{\indexedselvariable}\cdot\indexinterpretationofat{\selindex}{\sstatcoordinateofat{\selindex}{\shortcatindices}}} \\
        &\quad = \expof{\sum_{\selindexin}\canparamat{\indexedselvariable}\cdot\indexinterpretationofat{\selindex}{\sstatcoordinateofat{\selindex}{\shortcatindices}}} \\
        &\quad = \expof{\contractionof{\sencsstatat{\shortcatvariables,\selvariable},\canparamwith}{\indexedshortcatvariables}} \, .
    \end{align*}
    Therefore we have
    \begin{align*}
        \contractionof{\{\bencsstatwith\}\cup\{\softactlegwith \wcols \selindexin\}}{\shortcatvariables}
        = \expof{\contractionof{\sencsstatat{\shortcatvariables,\selvariable},\canparamwith}{\shortcatvariables}} \, .
    \end{align*}
    The claim follows, since this implies that also the contraction of both sides with $\basemeasurewith$ is equivalent.
\end{proof}

\begin{figure}[t]
    \begin{center}
        \input{tikz/exp-family/expdist_unary_realizable}
    \end{center}
    \caption{Representation of a member of an exponential family by a \ComputationActivationNetwork{} with elementary activation.
    Here $\partitionfunction\in\rr$ is a normalization constant.}\label{fig:expdistUnaryRealizable}
\end{figure}

We will use the following well known property (see e.g. \cite{brown_fundamentals_1987}), that the mean parameters of the members of an exponential family are exactly the relative interior of the mean polytope.

\begin{lemma}
    \label{lem:interiorRepExpFamily}
    The set of mean parameters of the members of an exponential family is the relative interior of the mean polytope.
\end{lemma}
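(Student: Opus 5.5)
The plan is to prove the two inclusions separately. Throughout we work with the forward map $\weightparam\mapsto\meanparam(\weightparam)=\contractionof{\expdist,\sencsstatwith,\basemeasurewith}{\selvariable}$. This map is the gradient of the cumulant function $\psi_{\sstat,\basemeasure}$, which is convex and smooth on $\parspace$. We also assume, without loss of generality, that the points of $\genimset$ affinely span $\parspace$. If they do not, we restrict $\weightparam$ to the linear span of the differences $\imelement-\imelement'$ for $\imelement,\imelement'\in\genimset$. Directions orthogonal to this span leave $\expdist$ unchanged, since they only add a constant to the exponent. After this reduction, the relative interior is the ordinary interior and $\psi_{\sstat,\basemeasure}$ is strictly convex.

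\emph{Mean parameters of family members lie in the relative interior.} Every member $\expdist$ is strictly positive on the support of $\basemeasure$. Its mean $\meanparam(\weightparam)$ is therefore a convex combination of all points of $\genimset$ with strictly positive weights. Suppose $\meanparam(\weightparam)$ lay on a proper face $\facesymbol$. Then \lemref{lem:faceMeasureRepCondition} gives that $\expdist$ is supported only where $\genfacemeasure$ is nonzero, and this support is strictly smaller than that of $\basemeasure$. That contradicts positivity. The step that remains is the standard fact that a point of a polytope lying on no proper face is in its relative interior. It follows from the supporting hyperplane theorem: a boundary point has a supporting normal $\canparamofat{\facesymbol}{\selvariable}$, and this normal defines a proper face containing the point.

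\emph{Every relative interior point is attained.} Fix $\genmean$ in the relative interior and consider the strictly concave function
\begin{align*}
    g(\weightparam)=\contraction{\weightparamwith,\genmeanwith}-\psi_{\sstat,\basemeasure}(\weightparam) \, .
\end{align*}
Its gradient is $\genmean-\meanparam(\weightparam)$, so any maximizer $\weightparam$ satisfies $\meanparam(\weightparam)=\genmean$, and $\expdist$ is then the required family member. The main obstacle is to show that a maximizer exists, which follows from coercivity of $-g$. For a unit direction $u$ we have $\psi_{\sstat,\basemeasure}(su)/s\to\max_{\imelement\in\genimset}\contraction{\imelement,u}$ as $s\to\infty$. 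Since $\genmean$ is interior and $\genimset$ affinely spans $\parspace$, this gives
\begin{align*}
    \contraction{\genmeanwith,u}<\max_{\imelement\in\genimset}\contraction{\imelement,u} \, .
\end{align*}
Hence $g(su)\to-\infty$ along every ray. Compactness of the unit sphere, together with continuity of the slope in $u$, makes this divergence uniform, so $g$ attains its maximum. This completes the equality. Uniqueness of $\weightparam$ in the reduced parametrization comes for free from strict concavity, but it is not needed for the statement.
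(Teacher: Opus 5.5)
Your proof is correct. The paper gives no argument of its own here, only a pointer to Theorem~3.3 of \cite{wainwright_graphical_2008}. Your two halves are essentially the standard proof of that result: full support of $\expdist$ plus a supporting hyperplane for the inclusion into the relative interior, and coercivity of $\theta\mapsto\langle\theta,\mu^*\rangle-\psi_{\sstat,\basemeasure}(\theta)$ for surjectivity.

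The genuinely useful addition is your reduction to the span of differences of points of $\genimset$. The cited theorem is stated for minimal families and the interior of the mean polytope. The paper, however, applies this lemma (via \theref{the:maxEntropyInterior}, in the proof of \theref{the:MAINgenMaxEntChar}) to exponential families on refined base measures, which it itself notes are not minimal. Your version is therefore the one that actually covers the use the paper makes of the lemma.

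One step needs another line. Pointwise convergence of $\psi_{\sstat,\basemeasure}(su)/s$ and continuity of the limiting slope do not by themselves give uniform divergence. A non-asymptotic bound fixes this. Let $\nu_{\min}$ be the smallest nonzero coordinate of $\nu$; then $\psi_{\sstat,\basemeasure}(\theta)\geq\max_{v\in\genimset}\langle v,\theta\rangle+\ln\nu_{\min}$ for every $\theta$. This yields $g(\theta)\leq-\delta\|\theta\|-\ln\nu_{\min}$, where $\delta=\min_{u}\bigl(\max_{v\in\genimset}\langle v,u\rangle-\langle\mu^*,u\rangle\bigr)>0$ and the minimum runs over unit vectors $u$ of the reduced parameter space. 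That settles existence of the maximizer.
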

\begin{proof}
    See The~3.3 in \cite{wainwright_graphical_2008}.
\end{proof}

Based on this property we define the forward and backward mappings to an exponential family.

\begin{definition}\label{def:forwardBackwardMapping}
    The forward map of an exponential family is the map
    \begin{align*}
        \forwardmap  \defcols \parspace \rightarrow \sbinteriorof{\genmeanset}
    \end{align*}
    defined as
    \begin{align*}
        \forwardmapof{\canparam}
        = \contractionof{\expdistofat{\sstat,\canparam,\basemeasure}{\shortcatvariables},\sencsstatwith,\basemeasurewith}{\selvariable}
    \end{align*}
    Any map $\backwardmap: \sbinteriorof{\genmeanset}\rightarrow \parspace$ with $\forwardmapof{\backwardmapof{\meanparam}}=\meanparam$ for all $\meanparam\in\sbinteriorof{\genmeanset}$ is called a backward map.
\end{definition}

\subsection{Maximum entropy distributions}

One universal property of exponential families amongst others (see \cite{murphy_probabilistic_2023}) is that they maximize the entropy among those with the same mean vector.

\begin{lemma}\label{lem:expFamilyMaxEnt}
    Any member of an exponential family $\expfamilyof{\sstat,\basemeasure}$ is a maximum entropy distribution with respect to the moment constraint on $\sstat$ and the base measure $\basemeasure$.
\end{lemma}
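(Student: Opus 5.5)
The plan is the standard Gibbs-inequality argument: compare the entropy of an arbitrary feasible distribution with that of the exponential family member through the relative entropy with respect to the base measure $\basemeasure$. Fix $\canparamwithin$ and let $\meanparam$ be the mean parameter of $\expdist$, i.e. $\meanparam=\forwardmapof{\canparam}$. Let $\probwith\in\distmeasuredby{\basemeasure}$ be any distribution satisfying the constraint of \problemref{prob:maxEntropy} with this $\meanparam$. The goal is to show $\sentropyofwrt{\probwith}{\basemeasure}\leq\sentropyofwrt{\expdist}{\basemeasure}$.

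The first step computes the cross term. By the coordinate formula after \defref{def:expFamily}, at every state with $\basemeasureat{\indexedshortcatvariables}\neq0$ we have $\lnof{\expdistat{\indexedshortcatvariables}}=\sum_{\selindexin}\canparamat{\indexedselvariable}\cdot\sstatcoordinateofat{\selindex}{\shortcatindices}-\psi_{\sstat,\basemeasure}(\canparam)$. This value is finite, since exponential family members are strictly positive on the support of $\basemeasure$. Contracting with $\probwith$ and $\basemeasurewith$, and using the normalization $\contraction{\probwith,\basemeasurewith}=1$, gives
\begin{align*}
    -\contraction{\probwith,\lnof{\expdist},\basemeasurewith}
    = \psi_{\sstat,\basemeasure}(\canparam) - \contraction{\meanparamwith,\canparamwith} \, .
\end{align*}
This quantity depends on $\probwith$ only through its mean parameter. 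It therefore takes the same value for $\probwith$ and for $\expdist$, and in the latter case it equals $\sentropyofwrt{\expdist}{\basemeasure}$.

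The second step applies Gibbs' inequality:
\begin{align*}
    \sentropyofwrt{\expdist}{\basemeasure}-\sentropyofwrt{\probwith}{\basemeasure}
    = \contraction{\probwith,\lnof{\probwith}-\lnof{\expdist},\basemeasurewith} \geq 0 \, .
\end{align*}
The middle expression is the relative entropy of $\probwith$ to $\expdist$ with respect to $\basemeasure$. Its nonnegativity follows from Jensen's inequality for the concave logarithm, or equivalently from $\ln u\leq u-1$. Both distributions are normalized against $\basemeasure$, so the bound reads $\sum \probtensor\basemeasure\ln(\expdist/\probtensor)\leq\sum\basemeasure(\expdist-\probtensor)\leq 0$. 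Here the sums run over states with $\probtensor\basemeasure>0$, and the remaining mass of $\expdist$ is nonnegative.

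The main obstacle is the boundary care. States where $\probwith$ vanishes are handled with the convention $0\ln 0=0$. States outside the support of $\basemeasure$ are excluded by the definition of $\distmeasuredby{\basemeasure}$. The case of a vanishing base measure cannot occur, because \defref{def:expFamily} assumes $\contraction{\basemeasure}\neq0$. Since equality in Gibbs' inequality holds only when $\probwith=\expdist$ on the support of $\basemeasure$, the argument also shows that $\expdist$ is the unique maximizer.
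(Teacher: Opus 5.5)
Your proposal is correct and follows the paper's proof essentially step for step. The paper likewise shows that the cross-entropy of any feasible distribution relative to the exponential family member equals $\psi_{t,\nu}(\theta)-\langle\theta,\mu\rangle$, which is the entropy of the member itself, and then concludes by Gibbs' inequality, with strict inequality for every other feasible distribution. Your derivation of Gibbs' inequality via $\ln u\leq u-1$ and your restriction of the uniqueness claim to the support of $\nu$ only make explicit what the paper takes for granted.
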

\begin{proof}
    Let $\expdistof{\sstat,\canparam,\basemeasure}$ be an exponential family member with canonical parameter $\canparam$ and let $\meanparamwith$ be the corresponding mean parameter.
    Let $\secprobat{\shortcatvariables}$ be any feasible distribution for the maximum entropy problem with respect to the moment constraint on $\sstat$ by $\meanparamwith$ and the base measure $\basemeasure$.
    We also have $\contractionof{\secprobat{\shortcatvariables},\sencsstatwith,\basemeasurewith}{\selvariable}=\meanparamat{\selvariable}$ and thus
    \begin{align*}
        \centropyofwrt{\secprobtensor}{\expdistof{\sstat,\canparam,\basemeasure}}{\basemeasure}
        &= -\contraction{\secprobat{\shortcatvariables},\lnof{\expdistofat{\sstat,\canparam,\basemeasure}{\shortcatvariables}},\basemeasurewith} \\
        &= -\contraction{\secprobat{\shortcatvariables},\sencsstatwith,\canparamwith,\basemeasurewith} + \cumfunctionof{\canparam} \\
        &= - \contraction{\canparamwith,\meanparamwith} + \cumfunctionof{\canparam} \\
        &= \sentropyofwrt{\expdistof{\sstat,\canparam,\basemeasure}}{\basemeasure} \, .
    \end{align*}
    With the Gibbs inequality we have if $\secprobtensor\neq\expdistof{\sstat,\canparam,\basemeasure}$
    \begin{align*}
        \sentropyofwrt{\expdistof{\sstat,\canparam,\basemeasure}}{\basemeasure}  - \sentropyofwrt{\secprobtensor}{\basemeasure}
        = \centropyofwrt{\secprobtensor}{\expdistof{\sstat,\canparam,\basemeasure}}{\basemeasure}  - \sentropyofwrt{\secprobtensor}{\basemeasure}  > 0 \,
    \end{align*}
    and thus $\sentropyofwrt{\secprobtensor}{\basemeasure}<\sentropyofwrt{\expdistof{\sstat,\canparam,\basemeasure}}{\basemeasure}$.
    Therefore, any feasible $\secprobtensor$ different from $\expdistof{\sstat,\canparam,\basemeasure}$ has less entropy than $\expdistof{\sstat,\canparam,\basemeasure}$ and $\expdistof{\sstat,\canparam,\basemeasure}$ is a maximum entropy distribution.
\end{proof}

Together with \lemref{lem:interiorRepExpFamily} stating the existence of an exponential family member for any mean parameter in the relative interior of the mean polytope we arrive at the following theorem.

\begin{theorem}
    \label{the:maxEntropyInterior}
    If and only if $\meanparamwith$ is in the relative interior of $\genmeanset$, then the unique solution of the maximum entropy problem is the distribution
    \begin{align*}
        \expdistofat{\sstat,\weightparam,\basemeasure}{\shortcatvariables}\in\expfamilyof{\sstat,\basemeasure}
    \end{align*}
    where for some backward map $\backwardmapwrt{\sstat,\basemeasure}$
    \begin{align*}
        \weightparam
        = \backwardmapwrtof{\sstat,\basemeasure}{\meanparam} \, .
    \end{align*}
\end{theorem}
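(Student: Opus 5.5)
We prove the two directions separately, leaning on \lemref{lem:interiorRepExpFamily}, \lemref{lem:expFamilyMaxEnt} and \lemref{lem:faceMeasureRepCondition}.

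For the \emph{if} direction, let $\meanparam\in\sbinteriorof{\genmeanset}$. By \lemref{lem:interiorRepExpFamily} some member of $\expfamilyof{\sstat,\basemeasure}$ has mean parameter $\meanparam$. Hence a backward map $\backwardmapwrt{\sstat,\basemeasure}$ exists, and we may pick $\weightparam=\backwardmapwrtof{\sstat,\basemeasure}{\meanparam}$ with $\forwardmapof{\weightparam}=\meanparam$. The distribution $\expdistofat{\sstat,\weightparam,\basemeasure}{\shortcatvariables}$ is then feasible for \problemref{prob:maxEntropy}. The proof of \lemref{lem:expFamilyMaxEnt} gives more than maximality: it shows via the Gibbs inequality that every feasible $\secprobtensor$ different from $\expdistof{\sstat,\weightparam,\basemeasure}$ has strictly smaller entropy. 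So $\expdistof{\sstat,\weightparam,\basemeasure}$ is the unique maximizer. This also shows the solution does not depend on which backward map (or which preimage $\weightparam$) we chose, since any two choices yield feasible exponential family members and uniqueness forces them to coincide.

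For the \emph{only if} direction, suppose the unique solution of \problemref{prob:maxEntropy} is a member $\expdistof{\sstat,\weightparam,\basemeasure}$ of the exponential family. Its mean parameter is $\meanparam$ by the constraint. By \lemref{lem:interiorRepExpFamily} the mean parameters of exponential family members lie in $\sbinteriorof{\genmeanset}$, so $\meanparam\in\sbinteriorof{\genmeanset}$.

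An alternative, self-contained route for the converse is the support argument. If $\meanparam$ is not in the relative interior, then by \lemref{lem:faceToMean} it lies in the relative interior of a proper face $\facesymbol$. By \lemref{lem:faceMeasureRepCondition}, every feasible distribution is then supported only on the support of $\genfacemeasure$, which is strictly smaller than that of $\basemeasure$. An exponential family member, however, is strictly positive on the whole support of $\basemeasure$, so it cannot be feasible.

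The main obstacle is interpretive rather than technical. The statement bundles existence, uniqueness and independence of the backward map, and the converse must be read as ``the solution is an exponential family member''. Once that reading is fixed, each piece follows from a cited lemma. The one point needing care is the properness of the face in the alternative argument: a proper face omits some vertex $\sencsstatat{\indexedshortcatvariables,\selvariable}$ with $\basemeasureat{\indexedshortcatvariables}\neq0$, so the support of $\genfacemeasure$ really is strictly smaller.
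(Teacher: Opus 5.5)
Your proof is correct and, for the \emph{if} direction, is the paper's argument: \lemref{lem:interiorRepExpFamily} supplies $\weightparam=\backwardmapwrtof{\sstat,\basemeasure}{\meanparam}$, and \lemref{lem:expFamilyMaxEnt} finishes it, its strict Gibbs inequality already giving uniqueness. The paper's proof stops there and leaves the \emph{only if} direction implicit, so your explicit converse via the other inclusion of \lemref{lem:interiorRepExpFamily} fills in a step the paper omits; your alternative support argument does the same and anticipates the face-measure reasoning the paper later uses in \lemref{lem:maxEntReduction}.
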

\begin{proof}
    By \lemref{lem:interiorRepExpFamily} if
    \begin{align*}
        \meanparamwith \in \sbinteriorof{\genmeanset}  \, ,
    \end{align*}
    there is at least one weight parameter $\weightparamwith\in\parspace$ with
    \begin{align*}
        \contractionof{\expdistofat{\sstat,\weightparam,\basemeasure}{\shortcatvariables},\sencsstatwith,\basemeasurewith}{\selvariable}
        =\meanparamat{\selvariable}
    \end{align*}
    and $\backwardmapwrtof{\sstat,\basemeasure}{\meanparam}$ with respect to any backward map $\backwardmapwrt{\sstat,\basemeasure}$ is such one.
    With \lemref{lem:expFamilyMaxEnt} we conclude that $\expdistof{(\sstat,\backwardmapwrtof{\sstat,\basemeasure}{\meanparam},\basemeasure)}$ is the maximum entropy distribution with respect to the moment constraint on $\sstat$ by $\meanparamwith$ and the base measure $\basemeasure$.
\end{proof}

While the maximum entropy distribution is unique, the weight vector $\weightparam$ representing the maximum entropy distribution is not always unique.
This is reflected by multiple possible choices of backward mappings.
Only if the exponential family is minimal (see \cite{wainwright_graphical_2008}) then the weight vector is unique.
However, we did not restrict to such situations, since exponential families on refined base measures are not minimal.

%% file: tikz/exp-family/expdist_unary_realizable.tex
\begin{tikzpicture}[scale=0.4,thick,xscale=1] 

    \begin{scope}
        [shift={(-11,0)}]
        \draw (-2,-1) rectangle (6,-3);
        \node[anchor=center] (text) at (2,-2) {\corelabelsize $ \expdist$};
        \draw[-<-] (0,-3)--(0,-5) node[midway,left] {\colorlabelsize $\catvariableof{0}$};
        \draw[-<-] (1.5,-3)--(1.5,-5) node[midway,left] {\colorlabelsize $\catvariableof{1}$};
        \node[anchor=center] (text) at (3,-4) {$\cdots$};
        \draw[-<-] (4,-3)--(4,-5) node[midway,right] {\colorlabelsize $\catvariableof{\atomorder\shortminus1}$};

        \node[anchor=center] (text) at (8,-2) {$= \frac{1}{\partitionfunction} \cdot $};
    \end{scope}

    \draw (-1.25,1) rectangle (1.25,3);
    \node[anchor=center] (text) at (0,2) {\corelabelsize $\softactsymbolof{0,\canparam}$};

    \draw (2.75,1) rectangle (5.25,3);
    \node[anchor=center] (text) at (4,2) {\corelabelsize $\softactsymbolof{\seldim\shortminus 1,\canparam}$};

    \draw[->-] (0,-1)--(0,0);
    \node[left] (text) at (0,0) {\colorlabelsize $\headvariableof{0}$};
    \draw[] (0,0)--(0,1);
    \drawvariabledot{0}{0}
    \node[anchor=center] (text) at (2,0) {$\cdots$};

    \draw[->-] (4,-1)--(4,0);
    \node[right] (text) at (4,0) {\colorlabelsize $\headvariableof{\seccatorder\shortminus1}$};
    \draw[] (4,0)--(4,1);
    \drawvariabledot{4}{0}

    \draw (-1,-1) rectangle (5,-3);
    \node[anchor=center] (text) at (2,-2) {\corelabelsize $\bencodingof{\sstat}$};
    \draw[-<-] (0,-3)--(0,-5) node[midway,left] {\colorlabelsize $\catvariableof{0}$};
    \draw[-<-] (1.5,-3)--(1.5,-5) node[midway,left] {\colorlabelsize $\catvariableof{1}$};
    \node[anchor=center] (text) at (3,-4) {$\cdots$};
    \draw[-<-] (4,-3)--(4,-5) node[midway,right] {\colorlabelsize $\catvariableof{\atomorder\shortminus1}$};

%
%
%

\end{tikzpicture}

%% file: sections/gen-rep.tex
\section{Generic maximum entropy distributions}\label{sec:genMaxEnt}

In the previous section we have shown that the members of an exponential family are maximum entropy distributions.
However, while their mean parameters constitute the relative interior of the mean polytope, they never lie on faces of lower dimension.
We now exploit the tensor representation study of such faces (see \secref{sec:meanPolytope}) to characterize the maximum entropy distributions in the general case.

\subsection{Main result}

Our main result generalizes the maximum entropy characterization of \theref{the:maxEntropyInterior} to arbitrary mean parameters.

\begin{theorem}[Generic characterization of Maximum Entropy Solutions]
    \label{the:MAINgenMaxEntChar}
    Let $\sstat$ be a statistic and $\basemeasure$ a base measure.
    For any $\weightparamwith$ and $\facein$ the distribution
    \begin{align*}
        \expdistofat{\sstat,\weightparam,\genfacemeasure}{\shortcatvariables}
    \end{align*}
    is a maximum entropy distribution. \\
    For any $\meanparamwith$ the maximum entropy problem has a feasible distribution, if and only if $\meanparamwith\in\genmeanset$.
    If $\meanparamwith\in\genmeanset$ the unique maximum entropy distribution is $\expdistof{\sstat,\weightparam,\genfacemeasure}$ where $\facesymbol=\faceto{\meanparam}$ (see \lemref{lem:faceToMean}) and
    \begin{align*}
        \weightparam
        = \backwardmapwrtof{\sstat,\genfacemeasure}{\meanparam}
    \end{align*}
    where $\genfacemeasure$ is the refined base measure (see \defref{def:faceMeasure}) and $\backwardmapwrt{\sstat,\genfacemeasure}$ is any backward map to the exponential family $\expfamilyof{\sstat,\genfacemeasure}$ (see \defref{def:forwardBackwardMapping}). \\
    For any hypergraph $\graph$ and any face $\facein$ we have that all maximum entropy distributions to mean parameters on $\interiorof{\facesymbol}$ are in $\cansof{\sstat,\graph,\basemeasure}$ if $\facesymbol$ is representable with respect to $\graph$ (see \defref{def:faceRepresentability}).
\end{theorem}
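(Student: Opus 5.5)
The proof splits into the three assertions of the statement, and each reduces to results already established once we pass from $\basemeasure$ to the refined base measure $\genfacemeasure$.

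\emph{First assertion.} Fix $\facein$ and $\weightparamwith$. By \lemref{lem:expFamilyMaxEnt} applied with base measure $\genfacemeasure$, the distribution $\expdistof{\sstat,\weightparam,\genfacemeasure}$ maximizes $\sentropyofwrt{\cdot}{\genfacemeasure}$ among distributions in $\distmeasuredby{\genfacemeasure}$ with the same mean parameter $\meanparam$. Since it lies in $\distmeasuredby{\genfacemeasure}$, \lemref{lem:faceMeasureRepCondition} and \lemref{lem:faceAsRefinedPolytope} give $\meanparam\in\facesymbol$. The same lemma shows that every feasible $\secprobtensor\in\distmeasuredby{\basemeasure}$ with mean $\meanparam\in\facesymbol$ already lies in $\distmeasuredby{\genfacemeasure}$. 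On this support $\genfacemeasure$ and $\basemeasure$ coincide, so the normalization constraint and the entropies relative to $\basemeasure$ and to $\genfacemeasure$ agree. Hence the feasible set of $(\mathrm{P}_{\sstat,\meanparam,\basemeasure})$ equals that of $(\mathrm{P}_{\sstat,\meanparam,\genfacemeasure})$ with the same objective, and the first assertion follows. This identification of the two problems is the key step; I would write it out carefully as a separate claim.

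\emph{Second assertion.} Feasibility is immediate from the definition of $\genmeanset$ as the set of mean parameters of $\distmeasuredby{\basemeasure}$. For $\meanparam\in\genmeanset$, take $\facesymbol=\faceto{\meanparam}$ from \lemref{lem:faceToMean}, so that $\meanparam\in\sbinteriorof{\facesymbol}$. By \lemref{lem:faceAsRefinedPolytope}, $\facesymbol=\meansetof{\sstat,\genfacemeasure}$, so $\meanparam$ lies in the relative interior of the mean polytope for base measure $\genfacemeasure$. \theref{the:maxEntropyInterior} then says $(\mathrm{P}_{\sstat,\meanparam,\genfacemeasure})$ has the unique solution $\expdistof{\sstat,\backwardmapwrtof{\sstat,\genfacemeasure}{\meanparam},\genfacemeasure}$. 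By the problem identification above, this is also the unique solution of the original problem.

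\emph{Third assertion.} Assume $\facesymbol$ is representable by $\graph$ through a face activating network $\kcoreof{\graph}$ with basis vector set $\arbset$, where $\arbset\cap\genimset=\genfaceimset$. Write $\expdistof{\sstat,\weightparam,\genfacemeasure}$ as a \CompActNet{}. By \theref{the:faceMeasureCharacterization}, $\genfacemeasure=\contractionof{\sstatccwith,\kcoreof{\facesymbol},\basemeasurewith}{\shortcatvariables}$, and as in \lemref{lem:faceMeasureRepresentation} we may replace $\kcoreof{\facesymbol}$ by $\contractionof{\kcoreof{\graph}}{\headvariables}$. As in \theref{the:expFamilyTensorRep}, the exponential factor equals $\contractionof{\sstatccwith,\softactsymbolofat{\selindex,\weightparam}{\headvariableof{\selindex}}:\selindexin}{\shortcatvariables}$. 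Because $\sstatccwith$ is a basis encoding, a Boolean tensor with exactly one nonzero $\headvariables$-slice per state, the pointwise product of two contractions with it equals a single contraction with the product of the activation tensors. Thus the unnormalized distribution is $\contractionof{\sstatccwith,\kcoreof{\graph},\{\softactsymbolof{\selindex,\weightparam}\},\basemeasurewith}{\shortcatvariables}$, and after normalization $\basemeasurewith$ cancels on the support.

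The main obstacle is placing the elementary leg vectors $\softactsymbolof{\selindex,\weightparam}$ inside $\tnsetof{\graph}$. I would argue that each node $\selindex$ lies in some hyperedge of $\graph$, by the standing assumption, so each leg vector can be absorbed into a core of $\kcoreof{\graph}$ containing $\headvariableof{\selindex}$. The combined activation is then again a tensor network on $\graph$. By the second assertion, this covers every maximum entropy distribution with mean in $\sbinteriorof{\facesymbol}$.
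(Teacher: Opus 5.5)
Your proposal is correct and follows the paper's proof essentially step for step: it reduces the problem to the refined base measure $\genfacemeasure$ (which the paper packages as \lemref{lem:maxEntReduction}), applies \theref{the:maxEntropyInterior} on $\facesymbol=\meansetof{\sstat,\genfacemeasure}$ for existence and uniqueness, and absorbs the leg vectors $\softactsymbolof{\selindex,\weightparam}$ into cores of the face activating network on hyperedges containing $\selindex$. Your deviations are minor: you prove the reduction for any mean parameter in $\facesymbol$ rather than only in $\sbinteriorof{\facesymbol}$, so that \lemref{lem:expFamilyMaxEnt} suffices for the first assertion, and you use the one-hot slices of $\bencodingof{\sstat}$ to show explicitly why the exponential factor and the face indicator merge into a single contraction, which the paper asserts without comment.
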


To prepare for the proof of this theorem we first show in an auxiliary lemma that we can reduce the set of feasible distributions in \problemref{prob:maxEntropy}.

\begin{lemma}
    \label{lem:maxEntReduction}
    For any $\meanparamwith\in\genmeanset$ and a face $\facein$ with $\meanparamwith\in\sbinteriorof{\facesymbol}$ the solutions of $\mathrm{P}_{\sstat,\meanparam,\basemeasure}$ and $\mathrm{P}_{\sstat,\meanparam,\genfacemeasure}$ coincide.
\end{lemma}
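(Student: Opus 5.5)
The plan is to show that the two problems have the same feasible set and the same objective on it. Solutions then coincide automatically.

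\textbf{Feasible sets.} A feasible distribution of $\mathrm{P}_{\sstat,\meanparam,\basemeasure}$ is some $\probwith\in\distmeasuredby{\basemeasure}$ with mean parameter $\meanparam$. Since $\meanparam\in\sbinteriorof{\facesymbol}\subset\facesymbol$, \lemref{lem:faceMeasureRepCondition} gives $\probwith\in\distmeasuredby{\genfacemeasure}$. Conversely, take $\probwith\in\distmeasuredby{\genfacemeasure}$. On the support of $\genfacemeasure$ the coordinates of $\genfacemeasure$ and $\basemeasure$ coincide, and $\probwith$ vanishes off that support. Hence
\begin{align*}
    \probwith\cdot\genfacemeasurewith=\probwith\cdot\basemeasurewith \, .
\end{align*}
It follows that $\probwith\in\distmeasuredby{\basemeasure}$: normalization carries over, and the support of $\genfacemeasure$ lies inside the support of $\basemeasure$. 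The mean parameter $\contractionof{\probwith,\sencsstatwith,\basemeasurewith}{\selvariable}$ is the same whether computed with $\basemeasure$ or with $\genfacemeasure$. So both problems share one feasible set: the distributions supported on the support of $\genfacemeasure$ with mean $\meanparam$.

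\textbf{Objectives.} I use the same pointwise identity $\probwith\cdot\genfacemeasurewith=\probwith\cdot\basemeasurewith$ inside the entropy contraction $-\contraction{\probwith,\lnof{\probwith},\basemeasurewith}$, with the convention $0\ln 0=0$. This shows $\sentropyofwrt{\probwith}{\basemeasure}=\sentropyofwrt{\probwith}{\genfacemeasure}$ for every feasible $\probwith$. Equal objectives on equal feasible sets give equal argmax sets.

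\textbf{Expected obstacle.} The delicate point is the direction using \lemref{lem:faceMeasureRepCondition}. That lemma only needs $\meanparam\in\facesymbol$, so the relative-interior hypothesis is not actually used here. It matters later, when \theref{the:maxEntropyInterior} is applied to the exponential family with base measure $\genfacemeasure$. That step needs $\sbinteriorof{\facesymbol}=\sbinteriorof{\meansetof{\sstat,\genfacemeasure}}$, which follows from \lemref{lem:faceAsRefinedPolytope}. I would record this remark but keep the proof of the lemma itself to the two steps above. The only care needed is the convention $0\ln0=0$ at states where $\genfacemeasure$ vanishes but $\basemeasure$ does not.
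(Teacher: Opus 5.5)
Your proposal is correct and follows essentially the same route as the paper: the feasible sets are matched via \lemref{lem:faceMeasureRepCondition} in one direction and the pointwise identity $\probwith\cdot\genfacemeasurewith=\probwith\cdot\basemeasurewith$ in the other, and the two entropies are then identified on that common set. Your extra remarks make explicit what the paper's proof leaves implicit: the mean constraint is unchanged under the refinement, the convention $0\ln 0=0$ is needed, and the relative-interior hypothesis is only used later, when \theref{the:maxEntropyInterior} is applied.
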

\begin{proof}
    By \lemref{lem:faceMeasureRepCondition} all feasible distributions are representable by the base measure refined to the face $\facesymbol$.
    We have that any for $\mathrm{P}_{\sstat,\meanparam,\basemeasure}$ feasible distribution $\probwith$ satisfies
    \begin{align*}
        \contraction{\probwith,\genfacemeasure} = 1 \,
    \end{align*}
    and thus $\probwith\in\distmeasuredby{\genfacemeasure}$.
    Conversely, any $\probwith\in\distmeasuredby{\genfacemeasure}$ satisfies
    \begin{align*}
        \contraction{\probwith,\genfacemeasure} = \contraction{\probwith,\basemeasure} = 1
    \end{align*}
    and thus $\probwith\in\distmeasuredby{\basemeasure}$.
    Problem $\mathrm{P}_{\sstat,\meanparam,\basemeasure}$ is thus equal to
    \begin{align*}
        \argmax_{\probwith\in\distmeasuredby{\genfacemeasure}} \sentropyofwrt{\probwith}{\basemeasure}
        \stspace
        \contractionof{\probwith,\sencsstatwith,\basemeasurewith}{\selvariable}
        = \genmeanat{\selvariable}
    \end{align*}
    We further have that any $\probwith\in\distmeasuredby{\genfacemeasure}$
    \begin{align*}
        \sentropyofwrt{\probwith}{\basemeasure} = \sentropyofwrt{\probwith}{\genfacemeasure}
    \end{align*}
    and arrive together with the above equivalence at the claim.
\end{proof}

\begin{proof}[Proof of \theref{the:MAINgenMaxEntChar}]
    To show that for any $\weightparamwith$ and $\facein$ the distribution $\expdistof{\sstat,\weightparam,\genfacemeasure}$ is a maximum entropy distribution, we apply \lemref{lem:maxEntReduction} and \theref{the:maxEntropyInterior}. \\
    If $\meanparamwith\notin\genmeanset$ then \problemref{prob:maxEntropy} is not feasible by definition.
    In case $\meanparamwith\in\genmeanset$ we find $\facein$ with $\facesymbol=\faceto{\meanparam}$ and by \lemref{lem:maxEntReduction} the solution of maximum entropy problem \problemref{prob:maxEntropy} coincides with $\mathrm{P}_{\sstat,\meanparam,\genfacemeasure}$.
    Since by \lemref{lem:faceAsRefinedPolytope} the face is the polytope
    \begin{align*}
        \facesymbol
        =\meansetof{\sstat,\genfacemeasure}
    \end{align*}
    we find a weight parameter
    \begin{align*}
        \weightparam
        = \backwardmapwrtof{\sstat,\genfacemeasure}{\meanparam} \, .
    \end{align*}
    and by \theref{the:maxEntropyInterior} $\expdistof{\sstat,\weightparam,\genfacemeasure}$ is the corresponding maximum entropy distribution. \\
    To show the last claim, it is enough to show that for any face $\facesymbol$ representable by $\graph$ and for any $\weightparamin$ we have
    \begin{align*}
        \expdistof{\sstat,\weightparam,\genfacemeasure}
        \in \cansof{\sstat,\graph,\basemeasure} \, .
    \end{align*}
    To this end, we construct activation cores as follows.
    By \defref{def:faceRepresentability} and \lemref{lem:faceMeasureRepresentation} there is a tensor network $\kcoreof{\graph}$ such that
    \begin{align*}
        \genfacemeasurewith
        = \contractionof{\kcoreof{\graph}\cup\{\basemeasurewith\}}{\shortcatvariables} \, .
    \end{align*}
    For each $\selindexin$ we find an edge $\edge(\selindex)\in\edges$ with $\selindex\in\edge(\selindex)$.
    Now we define a tensor network $\tnetof{\graph}$ by the cores to $\edgein$ as
    \begin{align*}
        \hypercoreofat{\edge}{\catvariableof{\edge}}
        = \contractionof{
            \{\kcoreofat{\edge}{\catvariableof{\edge}}\} \cup \{\softactlegwith\wcols\edge=\edge(\selindex)\}
        }{\catvariableof{\edge}} \, .
    \end{align*}
    Then, we have
    \begin{align*}
        \contractionof{\bencsstatwith,\contractionof{\tnetof{\graph}}{\headvariables}}{\shortcatvariables}
        = \contractionof{\bencsstatwith,\softacttensorwith,\contractionof{\kcoreof{\graph}}{\headvariables}}{\shortcatvariables}
    \end{align*}
    and therefore
    \begin{align*}
        \expdistof{\sstat,\weightparam,\genfacemeasure}
        = \frac{
            \contractionof{\{\bencsstatwith\}\cup\tnetof{\graph}}{\shortcatvariables}
        }{
            \contraction{\{\bencsstatwith,\basemeasurewith\}\cup\tnetof{\graph}}
        } \, .
    \end{align*}
    We conclude $\expdistof{\sstat,\weightparam,\genfacemeasure}\in \cansof{\sstat,\graph,\basemeasure}$.
\end{proof}

\subsection{Family of maximum entropy distributions}

The family of maximum entropy distributions is the set
\begin{align*}
    \maxentfamilyof{\sstat}{\basemeasure} =
    \{\probwith \wcols \exists\meanparam\in\genmeanset : \probwith \quad\text{is a solution of \problemref{prob:maxEntropy}} \} \, .
\end{align*}
\theref{the:MAINgenMaxEntChar} suggests a parametrization of this family as tuples of corresponding faces $\facesymbol$ and weight vectors $\weightparamwith$.
As sketched in \figref{fig:maxEntropyActcore} the activation tensor of each maximum entropy distribution is decomposed into a possibly non-elementary Boolean tensor corresponding to the base measure refinement on a face $\facesymbol$, and an elementary tensor dependent on the weight vector $\weightparamin$.
We thus define the canonical parameter space
\begin{align*}
    \gencanparamset
    \coloneqq \facelatticeof{\genmeanset} \times \weightspace
\end{align*}
and have
\begin{align*}
    \maxentfamilyof{\sstat}{\basemeasure} =
    \left\{
        \frac{
            \contractionof{\softacttensorat{\headvariables},\kcoreofat{\facesymbol}{\headvariables},\bencsstatwith}{\shortcatvariables}
        }{
            \contraction{\softacttensorat{\headvariables},\kcoreofat{\facesymbol}{\headvariables},\bencsstatwith,\basemeasurewith}
        }
        \wcols \canparamwithin,\facesymbol\in\gencanparamset
    \right\} \, .
\end{align*}

Note that these families are disjoint, since each member of an exponential family has support by the support of the face measure.

\begin{figure}[t]
    \begin{center}
        \input{tikz/main-result/max_entropy_actcore}
    \end{center}
    \caption{
        Tensor network decomposition of maximum entropy distributions to the constraint $\meanparamat{\selvariable}=\contractionof{\probwith,\sencsstatwith}{\selvariable}$.
        Blue: Boolean tensor $\kcoreofat{\facesymbol}{\headvariables}$ dependent on the face $\faceto{\meanparam}$, which represents the refinement of the base measure to the face, and is possibly non-elementary.
        Red: Elementary activation tensor $\softacttensorwith=\bigotimes_{\selenumeratorin}\softactlegwith$ to the weight $\weightparamwith= \backwardmapwrtof{\sstat,\genfacemeasure}{\meanparam}$.
    }\label{fig:maxEntropyActcore}
\end{figure}

%% file: tikz/main-result/max_entropy_actcore.tex
\begin{tikzpicture}[scale=0.35,thick]

    \begin{scope}
    [shift={(-15,0)}]

        \drawtensorblock{2}{-2}{3}{$\probtensor$}
        \drawddwire{0}{-5}{$\catvariableof{0}$}
        \drawddwire{1.5}{-5}{$\catvariableof{1}$}
        \drawddwire{4}{-5}{$\catvariableof{\catorder\shortminus1}$}
        \drawtextnode{3.25}{-4.75}{\colorlabelsize $\cdots$};

        \drawtextnode{8}{-2}{$= \,\frac{1}{\partitionfunction} \cdot $}
    \end{scope}

    \draw[\concolor] (-1,1) rectangle (5,3);
    \drawtextnode{2}{2}{\textcolor{\concolor}{$\kcoreof{\facesymbol}$}}

    \draw[->-] (0,-1)--(0,0);
    \node[right] (text) at (0,0) {\colorlabelsize $\headvariableof{0}$};
    \draw[\concolor] (0,0)--(0,1);
    \drawvariabledot{0}{0}
    \drawtextnode{2}{-0.5}{\colorlabelsize $\cdots$}

    \draw[\probcolor] (0,0) -- (-2,0);
    \draw[\probcolor] (-2,1) rectangle (-4,-1);
    \node[anchor=center,\probcolor] (text) at (-3,0) {\corelabelsize $\softactsymbolof{0,\canparam}$};

    \draw[->-] (4,-1)--(4,0);
    \node[left] (text) at (4,0) {\colorlabelsize $\headvariableof{\seccatorder\shortminus1}$};
    \draw[\concolor] (4,0)--(4,1);
    \drawvariabledot{4}{0}

    \draw[\probcolor] (4,0) -- (6,0);
    \draw[\probcolor] (6,1) rectangle (8.5,-1);
    \node[anchor=center,\probcolor] (text) at (7.25,0) {\corelabelsize $\softactsymbolof{\seccatorder\shortminus1,\canparam}$};

    \drawtensorblock{2}{-2}{3}{$\bencodingof{\sstat}$}
    \drawtdwire{0}{-3}{$\catvariableof{0}$}
    \drawtdwire{1.5}{-3}{$\catvariableof{1}$}
    \drawtdwire{4}{-3}{$\catvariableof{\catorder\shortminus1}$}
    \drawtextnode{3.25}{-4.75}{\colorlabelsize $\cdots$};

\end{tikzpicture}

%% file: sections/boolean-stat.tex
\section{Boolean statistics}\label{sec:booleanStat}

We say that a statistic is Boolean, if $\imageof{\sstatcoordinateof{\selindex}}\subset\ozset$ for all $\selindexin$.
The mean polytope is then a 0/1-polytope \cite{ziegler_lectures_2000}, which vertices are exactly the contained Boolean vectors.
The special case of Boolean statistics is especially interesting for two reasons.
First, we can provide additional intuition on those faces representable by elementary face activating tensors, namely based on intersections with cube faces.
Based on these properties we also show that any elementary \ComputationActivationNetwork{} to a Boolean statistic is a maximum entropy distribution.
Second, the statistic itself can be interpreted in terms of logical formulas.
We exploit this property to provide examples and to characterize each 0/1-polytope as a mean polytope to a collection of propositional formulas.

\subsection{Characterization of elementary representable faces}

Orienting on \exaref{exa:hypercubeFaces} we now introduce cube-likeness of faces and polytopes.

\begin{definition}
    \label{def:cubeLike}
    We say that a face $\facesymbol$ of $\genmeanset$ is cube-like, if it is empty or there is $\arbset\subset[\seldim]$ and $\headindexof{\arbset}\in\bigtimes_{\selindex\in\arbset}[2]$ such that
    \begin{align*}
        \facesymbol
        = \genmeanset \cap \facesymbolof{(\arbset,\headindexof{\arbset})} \, .
    \end{align*}
    Here we denote by $\facesymbolof{(\arbset,\headindexof{\arbset})}$ the faces of the hypercube $\cubeof{\seldim}$ (see \exaref{exa:hypercubeFaces}).
    We further say that a polytope $\genmeanset$ is cube-like, if all faces $\facesymbol\in\facelatticeof{\genmeanset}$ are cube-like.
\end{definition}

We now show that a face is cube-like if and only if it is representable by an elementary tensor.

\begin{theorem}
    \label{the:faceMeasureHardLogicNetworks}
    Let $\hlnstat$ be a Boolean statistic, $\basemeasure$ a base measure and $\facesymbol$ be a face of $\genmeanset$.
    Then the following are equivalent:
    \begin{itemize}
        \item[(i)] $\facesymbol$ is cube-like (see \defref{def:cubeLike}).
        \item[(ii)] $\facesymbol$ is representable by an elementary tensor (see \defref{def:faceRepresentability}).
    \end{itemize}
\end{theorem}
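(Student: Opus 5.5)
The plan is to prove both implications through one combinatorial fact: a nonzero elementary tensor whose contraction is a sum of one-hot vectors is the indicator of a product set of Boolean vectors, and such product sets are exactly the Boolean points of the cube faces $\facesymbolof{(\arbset,\headindexof{\arbset})}$ of \exaref{exa:hypercubeFaces}.

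Since $\sstat$ is Boolean, each $\headvariableof{\selindex}$ enumerates a subset of $\ozset$. I will identify its states with the values $0,1$ via $\indexinterpretationof{\selindex}$. If $\imageof{\sstatcoordinateof{\selindex}}$ is a singleton, then $\headvariableof{\selindex}$ has dimension one, and that coordinate is constant on $\genimset$.

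The geometric fact I need is the following. For any cube face $C=\facesymbolof{(\arbset,\headindexof{\arbset})}$, the set $\genmeanset\cap C$ is a face of $\genmeanset$ with
\begin{align*}
    \genmeanset\cap C=\convhullof{\genimset\cap C}\, .
\end{align*}
To prove it, I take the normal $\canparamofat{}{\indexedselvariable}=2\headindexof{\selindex}-1$ for $\selindex\in\arbset$ and $0$ otherwise. This linear functional is maximized over the cube exactly on $C$, with maximum value $\sum_{\selindex\in\arbset}\headindexof{\selindex}$. Since $\genimset\subset\ozset^{\seldim}$, the argmax over $\genimset$ is $\genimset\cap C$ whenever this set is nonempty. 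A convex combination of points of $\genimset$ attains the maximal value only if all points carrying positive weight do. Hence $\genmeanset\cap C=\convhullof{\genimset\cap C}$, and this is a face in the sense of the paper's definition. If $\genimset\cap C$ is empty, both sides are empty.

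For (i)$\Rightarrow$(ii), first take $\facesymbol$ empty. Then the zero elementary tensor with $\arbset=\varnothing$ gives $\arbset\cap\genimset=\varnothing=\genfaceimset$. Otherwise $\facesymbol=\genmeanset\cap C$, and I use the legs $\hardactlegwith$ from \exaref{exa:hypercubeFaces}, restricted to the available states of $\headvariableof{\selindex}$. Their product is $\sum_{\imelement\in\arbset'}\onehotmapofat{\imelement}{\headvariables}$, where $\arbset'$ consists of the basis vectors whose coordinates lie in $C$. Then $\arbset'\cap\genimset=\genimset\cap C$.

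It remains to check $\genimset\cap C=\genfaceimset$. By the fact above, $\convhullof{\genimset\cap C}=\facesymbol$. The points of $\genimset$ lying in $\facesymbol$ are exactly $\genfaceimset$: every point of $\genimset$ lies in $\genmeanset$, so one lying in $\facesymbol$ attains the maximum of the face normal and is in the argmax. Hence the face is representable in the sense of \defref{def:faceRepresentability}.

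For (ii)$\Rightarrow$(i), suppose $\bigotimes_{\selindexin}a_{\selindex}[\headvariableof{\selindex}]=\sum_{\imelement\in\arbset}\onehotmapofat{\imelement}{\headvariables}$. If $\arbset$ is empty, then $\genfaceimset$ is empty, so $\facesymbol$ is empty and cube-like. Otherwise, let $S_{\selindex}$ be the support of $a_{\selindex}$. The support of the product is $\bigtimes_{\selindex}S_{\selindex}$, and on it all products equal $1$. Varying one coordinate at a time shows that each $a_{\selindex}$ is constant on $S_{\selindex}$. So $\arbset=\bigtimes_{\selindex}S_{\selindex}$ with $S_{\selindex}\subset\ozset$ nonempty.

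Set $\arbset^{*}=\{\selindex:\cardof{S_{\selindex}}=1\}$ and let $\headindexof{\arbset^{*}}$ be the unique elements. Then $\genfaceimset=\genimset\cap\arbset=\genimset\cap C$ for $C=\facesymbolof{(\arbset^{*},\headindexof{\arbset^{*}})}$. There is one subtlety for coordinates whose image is the singleton $\{c\}$. Such a coordinate lies in $\arbset^{*}$ with value $c$, and this imposes no constraint on $\genimset$, so the identity still holds. By the geometric fact, $\facesymbol=\convhullof{\genfaceimset}=\genmeanset\cap C$, which is cube-like.

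I expect the main obstacle to be the geometric fact $\genmeanset\cap C=\convhullof{\genimset\cap C}$, together with the bookkeeping showing that a face's vertex set $\genfaceimset$ is all of $\genimset\cap\facesymbol$. The factorization step for elementary 0/1 tensors is routine.
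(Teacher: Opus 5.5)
Your proof is correct and follows the same route as the paper: the hypercube legs $\hardactlegwith$ give (i)$\Rightarrow$(ii), and for (ii)$\Rightarrow$(i) you read off $(\arbset,\headindexof{\arbset})$ from the supports of the Boolean elementary legs. Your geometric fact $\genmeanset\cap C=\convhullof{\genimset\cap C}$, proved with the cube-face normal, justifies the step the paper only states as ``taking convex hulls on both sides''; your handling of singleton-image coordinates also covers a case the paper tacitly skips.
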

\begin{proof}
    If the face is empty, i.e. $\facesymbol=\varnothing$, it is by definition cube-like and has $\zerosat{\headvariables}$ as an elementary activation tensor.
    We therefore assume in the following $\facesymbol\neq\varnothing$.

    (i)$\Rightarrow$(ii):
    Let us assume that $\facesymbol$ is cube-like, that is there is $\arbset\subset[\seldim]$ and $\headindexof{\arbset}\in\bigtimes_{\selindex\in\arbset}[2]$ such that $\facesymbol = \genmeanset \cap \facesymbolof{(\arbset,\headindexof{\arbset})}$.
    We use that $\facesymbol,\genmeanset$ and $\facesymbolof{(\arbset,\headindexof{\arbset})}$ are the convex hulls of the cube vertex sets $\imsetof{\hlnstat,\meanset}{\facesymbol}$, $\imsetof{\hlnstat,\meanset}{}$ and that for the set
    \begin{align}\label{eq:cubeFaceVertices}
        \imsetof{}{\arbset,\headindexof{\arbset}}
        = \{\imelementwith \wcols \uniquantwrtof{\selindex\in\arbset}{\imelementat{\indexedselvariable}=\headindexof{\arbset}}\} \, ,
    \end{align}
    we have
    \begin{align*}
        \imsetof{\hlnstat,\meanset}{\facesymbol}
        = \imsetof{}{\arbset,\headindexof{\arbset}} \cap \imsetof{\hlnstat,\meanset}{} \, .
    \end{align*}
    Thus, we can choose $\imsetof{}{\arbset,\headindexof{\arbset}}$ for the representation of the face $\facesymbol$ (see \defref{def:faceRepresentability}) and further have that
    \begin{align*}
        \sum_{\imelementwith\in\imsetof{}{(\arbset,\headindexof{\arbset})}}\onehotmapofat{\imelementwith}{\headvariables}
        = \bigotimes_{\selindexin} \hardactlegwith \, ,
    \end{align*}
    where
    \begin{align*}
        \hardactlegwith
        = \begin{cases}
                \onehotmapofat{\headindexof{\selindex}}{\headvariableof{\selindex}} & \ifspace \selindex\in\arbset \\
                \onesat{\headvariableof{\selindex}} & \ifspace \selindex\notin\arbset \, .
        \end{cases}
    \end{align*}
    We have thus found an elementary activation tensor for the face $\facesymbol$.

    (ii)$\rightarrow$(i)
    Conversely, let $\acttensorwith$ be an elementary activation tensor of the face $\facesymbol$ in $\genmeanset$.
    Since $\acttensorwith$ is the sum of different one-hot encodings it is Boolean and we find an elementary decomposition $\acttensorwith=\bigotimes_{\selindexin}\acttensorlegwith$ such that the leg vectors $\acttensorwith$ are Boolean.
    Since $\facesymbol\neq\varnothing$ we further have for $\selindexin$ that $\acttensorlegwith\neq\zerosat{\headvariableof{\selindex}}$, and thus $\acttensorlegwith\in \{\fbasisat{\headvariableof{\selindex}},\tbasisat{\headvariableof{\selindex}},\onesat{\headvariableof{\selindex}}\}$
    We construct a set $\arbset\subset[\seldim]$
    \begin{align*}
        \arbset
        = \left\{\selindex \wcols {\acttensorlegwith}\neq\onesat{\headvariableof{\selindex}}\right\}
    \end{align*}
    and a tuple
    \begin{align*}
        \headindexof{\selindex}
        = \begin{cases}
              1 & \ifspace {\acttensorlegwith}=\onehotmapofat{1}{\headvariableof{\selindex}} \\
              0 & \ifspace {\acttensorlegwith}=\onehotmapofat{0}{\headvariableof{\selindex}}
        \end{cases} \, .
    \end{align*}
    By construction ${\acttensorlegwith} = \hardactlegwith$ (see \exaref{exa:hypercubeFaces}) follows for all $\selindexin$ and therefore $\acttensorwith=\hardacttensorwith$.
    We therefore have for the set \eqref{eq:cubeFaceVertices}
    \begin{align*}
        \acttensorwith
        = \sum_{\imelement\in\imsetof{}{\arbset,\headindexof{\arbset}}} \onehotmapofat{\imelement}{\headvariables}
    \end{align*}
    and since $\acttensorwith$ is an activation tensor for $\facesymbol$, that
    \begin{align*}
        \imsetof{\hlnstat,\meanset}{\facesymbol}
        = \imsetof{}{\arbset,\headindexof{\arbset}}\cap\imsetof{\hlnstat,\meanset}{} \, .
    \end{align*}
    Taking convex hulls on both sides, this is equivalent to
    \begin{align*}
        \facesymbol
        = \facesymbolof{(\arbset,\headindexof{\arbset})} \cap \genmeanset \, .
    \end{align*}
    We conclude that $\facesymbol$ is cube-like.
\end{proof}

\theref{the:faceMeasureHardLogicNetworks} implies that the maximum entropy distributions to hypercubes (see \exaref{exa:hypercubeFaces}) and simplices (see \exaref{exa:simplexFaces}) are in $\cansof{\hlnstat,\elgraph,\basemeasure}$, since these polytopes are cube-like.

We can further show, that in case of Boolean statistics any distribution in $\cansof{\hlnstat,\elgraph,\basemeasure}$ is a maximum entropy distribution.


\begin{theorem}
    Let $\hlnstat$ be a Boolean statistic.
    Any distribution in $\cansof{\hlnstat,\elgraph,\basemeasure}$ is a maximum entropy distribution with respect to $(\hlnstat,\meanparam,\basemeasure)$ where $\meanparam$ is its mean parameter.
\end{theorem}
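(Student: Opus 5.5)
The plan is to show that every member of $\cansof{\hlnstat,\elgraph,\basemeasure}$ can be rewritten as $\expdistof{\sstat,\weightparam,\genfacemeasure}$ for a suitable weight vector $\weightparam$ and a (cube-like) face $\facesymbol$ of $\genmeanset$. The claim then follows from the first statement of \theref{the:MAINgenMaxEntChar}: every such distribution is a maximum entropy distribution, necessarily for its own mean parameter.

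\textbf{Step 1: normalize the legs.} Let $\probwith$ be given by an elementary activation $\acttensorwith=\bigotimes_{\selindexin}\acttensorlegwith$. Since $\hlnstat$ is Boolean, each $\headvariableof{\selindex}$ has dimension at most $2$; in the Boolean case I identify the index $\headindexof{\selindex}$ with the value $\indexinterpretationofat{\selindex}{\headindexof{\selindex}}\in\ozset$, as in \exaref{exa:hypercubeFaces}. Only the values of $\acttensorwith$ on the vertex set $\genimset$ enter the contraction with $\bencsstatwith$ and $\basemeasurewith$. I expect this step to be the main obstacle, because legs may carry signs. I first try the following reduction. Since the normalizer is nonzero, no leg vanishes identically. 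A global sign, i.e. a product of leg signs, cancels in the normalization, so I flip every leg that is nonpositive. A leg with entries of strictly mixed signs would, together with nonnegativity of $\probwith$, force the remaining product to vanish on the vertices where the sign is wrong. In that case I replace the offending entry by $0$: this changes $\acttensorwith$ only where it is already zero on $\genimset$. If this argument turns out delicate, the fallback is to read the definition of $\cansof{\hlnstat,\elgraph,\basemeasure}$ as requiring nonnegative leg vectors, which is implicit since the family is meant to consist of distributions. After this step all legs are nonnegative and nonzero.

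\textbf{Step 2: split into hard and soft parts.} Let $\arbset$ be the set of $\selindex$ whose leg has exactly one zero entry, and let $\headindexof{\selindex}$ be the position of the nonzero entry. For $\selindex\notin\arbset$ both entries $a_\selindex=\acttensorleg[0]$ and $b_\selindex=\acttensorleg[1]$ are positive, and I set $\weightparamat{\indexedselvariable}=\ln(b_\selindex/a_\selindex)$; for $\selindex\in\arbset$ I set $\weightparamat{\indexedselvariable}=0$. (If the leg has only one coordinate, it is a positive constant and $\weightparamat{\indexedselvariable}=0$.) Then, up to a positive constant,
\begin{align*}
\acttensorwith \propto \hardacttensorwith\cdot\softacttensorwith ,
\end{align*}
where the product is coordinatewise, $\hardacttensor$ is the elementary tensor of \exaref{exa:hypercubeFaces} and $\softacttensor$ is the one of \theref{the:expFamilyTensorRep}.

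\textbf{Step 3: identify the face.} Let $\facesymbol=\genmeanset\cap\facesymbolof{(\arbset,\headindexof{\arbset})}$. It is nonempty because the normalizer is nonzero. I check that it is a face of $\genmeanset$ using the normal vector $w$ with $w_\selindex=1$ if $\selindex\in\arbset$ and $\headindexof{\selindex}=1$, $w_\selindex=-1$ if $\selindex\in\arbset$ and $\headindexof{\selindex}=0$, and $w_\selindex=0$ otherwise. On 0/1 vectors, $\langle v,w\rangle$ attains the value $\cardof{\{\selindex\in\arbset:\headindexof{\selindex}=1\}}$ exactly on the vertex set \eqref{eq:cubeFaceVertices}. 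Hence the argmax over $\genimset$ is $\imsetof{}{\arbset,\headindexof{\arbset}}\cap\genimset$, using nonemptiness. Thus $\facesymbol$ is a cube-like face, and by the (i)$\Rightarrow$(ii) direction of \theref{the:faceMeasureHardLogicNetworks}, $\hardacttensor$ is a face activating tensor for it.

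\textbf{Step 4: conclude.} As in the proof of \lemref{lem:faceMeasureRepresentation} and \theref{the:faceMeasureCharacterization}, $\contractionof{\bencsstatwith,\hardacttensorwith,\basemeasurewith}{\shortcatvariables}=\genfacemeasurewith$. Combining this with \theref{the:expFamilyTensorRep} for the base measure $\genfacemeasure$ gives $\probwith=\expdistof{\sstat,\weightparam,\genfacemeasure}$. The first claim of \theref{the:MAINgenMaxEntChar} then finishes the proof.
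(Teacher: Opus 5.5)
Your route is the paper's. The support of each leg gives the cube-face activation tensor for $\mathcal{F}=\mathcal{M}_{t,\nu}\cap\mathcal{F}^{(\mathcal{U},y_{\mathcal{U}})}$, the ratio of the two entries gives $\theta$, and the first claim of \theref{the:MAINgenMaxEntChar} finishes. Steps 2--4 are correct. They are more careful than the paper's proof, which checks neither that this intersection is a face (your normal vector $w$ does) nor that it is nonempty (your nonzero-normalizer argument does).

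The mixed-sign reduction in Step 1, however, fails as written. Write $\tau[v]=\prod_\ell\tau^\ell[v_\ell]$ for the elementary activation and $Z$ for the normalizer. Sign changes in several legs can cancel, so a strictly mixed leg need not force the remaining product to vanish anywhere. Zeroing an ``offending'' entry can then change the distribution.

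\emph{Counterexample.} Take one Boolean variable, $t(x)=(x,x)$, $\nu=\mathbb{I}$, and legs $(1,-2)$, $(1,-3)$. The network takes the values $1$ and $6$, so it is the genuine distribution $(1/7,6/7)$. Both legs are strictly mixed, and setting a negative entry to $0$ turns it into $(1,0)$.

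Your argument also tacitly assumes $Z>0$ after flipping the nonpositive legs. This fails, e.g., for $t(x)=(x,1-x)$ with both legs equal to $(1,-1)$: no leg is nonpositive, yet $Z=-2$.

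\emph{Fix.} Replace every leg by its entrywise absolute value, so the new network computes $\prod_\ell|\tau^\ell[v_\ell]|=|\tau[v]|$. Nonnegativity of $\mathbb{P}$, together with $\nu\geq 0$, forces every product $\tau[t(x)]\,\nu[x]$ to be zero or to have the sign of $Z$. Hence $|\tau[t(x)]|=\mathrm{sgn}(Z)\,\tau[t(x)]$ wherever $\nu[x]\neq 0$, and $\sum_x|\tau[t(x)]|\,\nu[x]=|Z|$, so $\mathbb{P}$ is unchanged.

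Your fallback (nonnegative legs) is exactly what the paper assumes tacitly when it sets $\theta_\ell=\ln(\tau^\ell[1]/\tau^\ell[0])$. With either repair, the rest of your proof goes through.
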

\begin{proof}
    Let $\probtensor\in\cansof{\hlnstat,\elgraph,\basemeasure}$ be arbitrary.
    We then find an elementary tensor $\bigotimes_{\selenumeratorin}\hypercoreofat{\selindex}{\headvariableof{\selindex}}$ and $\partitionfunction\in\rr$ such that
    \begin{align*}
        \probwith
        = \frac{1}{\partitionfunction} \contractionof{\bencsstatwith,\bigotimes_{\selenumeratorin}\hypercoreofat{\selindex}{\headvariableof{\selindex}}}{\shortcatvariables} \, .
    \end{align*}
    We then define for each $\selindexin$ the vector $\kcoreofat{\selindex}{\headvariableof{\selindex}}$ as the support of $\hypercoreofat{\selindex}{\headvariableof{\selindex}}$ and
    \begin{align*}
        \weightparamat{\indexedselvariable}
        = \begin{cases}
              \lnof{\frac{\hypercoreofat{\selindex}{\headvariableof{\selindex}=1}}{\hypercoreofat{\selindex}{\headvariableof{\selindex}=0}}} & \ifspace \hypercoreofat{\selindex}{\headvariableof{\selindex}=0},\hypercoreofat{\selindex}{\headvariableof{\selindex}=1} \neq 0 \\
              0 \elsetext
        \end{cases} \, .
    \end{align*}
    Since $\hlnstat$ is a Boolean statistic the $\bigotimes_{\selindexin}\kcoreofat{\selindex}{\headvariableof{\selindex}}$ is a face activation tensor to the face
    \begin{align*}
        \facesymbol \coloneqq \hlnmeanset \cap \facesymbolof{(\arbset,\headindexof{\arbset})}
    \end{align*}
    where $\arbset=\{\selindex\wcols\kcoreofat{\selindex}{\headvariableof{\selindex}}\neq \onesat{\headvariableof{\selindex}}\}$ and for $\selindex\in\arbset$
    \begin{align*}
        \headindexof{\selindex}
        = \begin{cases}
              0 & \ifspace \kcoreofat{\selindex}{\headvariableof{\selindex}=1} = 0 \\
              1 & \elsetext
        \end{cases} \, .
    \end{align*}
    Therefore, $\probtensor$ is the maximum entropy distribution to the canonical parameters $\facesymbol$ and $\weightparam$.
\end{proof}

\subsection{Interpretation by propositional formulas}

Let us now assume that $\catdimof{\catenumerator}=2$ for any $\catenumeratorin$.
Following the formalism in \cite{goessmann_tensor_2026} we understand each coordinate $\sstatcoordinateof{\selindex}$ of the statistic as a propositional formula and the variables $\shortcatvariables$ as atoms of a propositional theory.
We use propositional connectives for syntactical representations of the formulas and denote by $\lnot^{1}$ the logical negation and by $\lnot^{0}$ the identity.
A Boolean tensor $\formulaat{\shortcatvariables}$ is called satisfiable, if there is at least one $\shortcatindicesin$ with $\formulaat{\indexedshortcatvariables}=1$.

\begin{theorem}\label{the:boolVecSat}
    Let $\sstat$ be a Boolean statistic.
    Any Boolean vector $\imelementwith\in\{0,1\}^{\seldim}$ is a vertex of $\hlnmeanset$ if and only if
    \begin{align*}
        \formulaofat{\imelement}{\shortcatvariables}
        \coloneqq \bigwedge_{\selindexin} \lnot^{1-\imelementat{\indexedselvariable}} \formulaofat{\selindex}{\shortcatvariables}
    \end{align*}
    is satisfiable.
\end{theorem}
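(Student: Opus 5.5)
The plan is to reduce the vertex property to membership of $\imelementwith$ in the image set $\imsetof{\hlnstat,\trivbm}{}$, and then translate that membership into satisfiability of the conjunction $\formulaof{\imelement}$. Here the base measure is the trivial one $\trivbm$, so every state $\shortcatindicesin$ has $\basemeasureat{\indexedshortcatvariables}\neq0$. The set $\imsetof{\hlnstat,\trivbm}{}$ is then simply the image $\{\sstatat{\shortcatindices}\wcols\shortcatindicesin\}$, and $\hlnmeanset=\convhullof{\imsetof{\hlnstat,\trivbm}{}}$.

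The first step is the polytope-geometric fact for 0/1-polytopes: a Boolean vector $\imelementwith\in\{0,1\}^{\seldim}$ is a vertex of $\convhullof{\imset}$ with $\imset\subset\{0,1\}^{\seldim}$ if and only if $\imelementwith\in\imset$.

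For the direction ``$\Leftarrow$'', take $\imelementwith\in\imset$ and use the normal vector $\canparamofat{\facesymbol}{\selvariable}=2\imelementwith-\onesat{\selvariable}$. For any Boolean $w$ we have $\contraction{w,\canparamofat{\facesymbol}{\selvariable}}=\sum_{\selindex}(2\imelementat{\indexedselvariable}-1)w_\selindex$. This quantity equals $\cardof{\{\selindex\wcols\imelementat{\indexedselvariable}=1\}}$ exactly when $w=\imelement$, and is strictly smaller otherwise. Hence the argmax over $\imset$ is the singleton $\{\imelement\}$, which is a face of affine dimension $0$, i.e.\ a vertex in the sense of the face definition (compare \exaref{exa:vertexMeasures}).

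For the direction ``$\Rightarrow$'', a vertex is a face $\convhullof{\genfaceimset}$ with $\genfaceimset\subset\imset$ that is a single point, so the point lies in $\imset$. Equivalently, one can note that a Boolean vector is an extreme point of the cube and therefore cannot be a nontrivial convex combination of other cube points.

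The second step is to unfold the definition of $\formulaof{\imelement}$. At a state $\shortcatindices$, the literal $\lnot^{1-\imelementat{\indexedselvariable}}\formulaof{\selindex}$ evaluates to $1$ if and only if $\formulaofat{\selindex}{\indexedshortcatvariables}=\imelementat{\indexedselvariable}$. This holds because $\lnot^{0}$ is the identity and $\lnot^{1}$ flips the Boolean value. The conjunction is therefore $1$ at $\shortcatindices$ if and only if $\sstatat{\shortcatindices}=\imelement$ coordinatewise. In tensor language, $\formulaofat{\imelement}{\shortcatvariables}=\contractionof{\sstatccwith,\onehotmapofat{\imelementwith}{\headvariables}}{\shortcatvariables}$, the indicator of the preimage already used in the proof of \theref{the:faceMeasureCharacterization}.

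Satisfiability of $\formulaof{\imelement}$ is thus equivalent to the preimage being nonempty, i.e.\ to $\imelement\in\imsetof{\hlnstat,\trivbm}{}$. Combining this with the first step gives the claim.

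The only delicate point is the first step: the argmax characterization of faces has to produce exactly the singleton, and the strict inequality above must be checked for Boolean $w\neq\imelement$. Each coordinate contributes at most its value under $\imelement$, with equality only when $w_\selindex=\imelementat{\indexedselvariable}$, so this check is routine.
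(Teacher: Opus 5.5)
Your proposal is correct and follows the same chain of equivalences as the paper's proof: $v$ is a vertex of the mean polytope iff $v=t(x)$ for some state $x$, iff every literal $\lnot^{1-v_\ell}f_\ell$ evaluates to $1$ at that state, iff $f_v$ is satisfiable. The one difference is that you prove the first equivalence explicitly, using the normal vector $2v-\mathbb{I}$ and the paper's face definition. The paper instead takes that step from its earlier remark that the vertices of a 0/1-polytope are exactly its Boolean points.
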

\begin{proof}
    Let $\imelementwith\in\{0,1\}^{\seldim}$ be arbitrary.
    If and only if $\imelementwith$ is a vertex of $\hlnmeanset$ we find $\shortcatindicesin$ with $\imelementwith=\sencsstat{\indexedshortcatvariables,\selvariable}$.
    This is further equivalent to $\formulaofat{\selindex}{\indexedshortcatvariables}=\imelementat{\indexedselvariable}$ for any $\selindexin$, $\lnot^{1-\imelementat{\indexedselvariable}}\formulaofat{\selindex}{\indexedshortcatvariables}=1$ for any $\selindexin$, and $\formulaofat{\imelement}{\indexedshortcatvariables}=1$.
\end{proof}

Since the presence of Boolean vectors in $\hlnmeanset$ is related by \theref{the:boolVecSat} to the satisfiability of the conjunction $ \formulaofat{\imelement}{\shortcatvariables}$, the absence is related to entailment statements.
To be more precise, for any $\imelementwith\in\{0,1\}^{\seldim}$ and arbitrary $\arbset\subset[\seldim]$ we have $\imelementwith\notin\hlnmeanset$, if and only if
\begin{align*}
    \left(\bigwedge_{\selindex\in\arbset} \lnot^{1-\imelementat{\indexedselvariable}} \formulaofat{\selindex}{\shortcatvariables} \right)
    \models \left(\bigvee_{\selindex\notin\arbset}\lnot^{\imelementat{\indexedselvariable}} \formulaofat{\selindex}{\shortcatvariables} \right) \, .
\end{align*}
By definition (see \cite{russell_artificial_2021}) the latter holds if and only if
\begin{align*}
    \left(\bigwedge_{\selindex\in\arbset} \lnot^{1-\imelementat{\indexedselvariable}} \formulaofat{\selindex}{\shortcatvariables} \right)
    \land \lnot \left(\bigvee_{\selindex\notin\arbset}\lnot^{\imelementat{\indexedselvariable}} \formulaofat{\selindex}{\shortcatvariables} \right)
    = \formulaofat{\imelement}{\shortcatvariables}
\end{align*}
is not satisfiable.

Let us now relate two well-studied sets of formulas, namely atomic formulas and minterm formulas, with two well-studied polytopes, namely hypercubes and simplices.

\input{examples/boolean-stat/hypercube-atomic}

\input{examples/boolean-stat/simplex-minterm}

While hypercubes and simplices are cube-like, we now provide examples of statistics, which polytopes are not cube-like.

\input{examples/boolean-stat/nonel_hlnstat_maxent}

\subsection{Faces activated in the $\ttformat$ format}

\begin{figure}
    \input{tikz/boolean-stat/tt-format}
    \caption{
        $\ttformat$ format consisting of the $\ttformat$ hypergraph a) and decorating tensor networks b).
    }\label{fig:ttFormat}
\end{figure}

The $\ttformat$ format is a well-studied tensor network format with favorable numerical properties \cite{holtz_manifolds_2012} and applied across several scientific areas \cite{sander_large-scale_2025}.
This format describes tensor networks on a hypergraph (see \figref{fig:ttFormat})
\begin{align*}
    \ttformat
    = \big(
    \shortcatvariables\cup\decvariableof{[\catorder-1]},
    \{\{\decvariableof{\catenumerator-1},\catvariableof{\catenumerator},\decvariableof{\catenumerator}\}\wcols\catenumerator\in\{1,\ldots,\catorder-2\}\}
    \cup\{\{\catvariableof{0},\decvariableof{0}\},\{\decvariableof{\catorder-2},\catvariableof{\catorder-1}\}\}
    \big) \, ,
\end{align*}
where $\decvariableof{[\catorder-1]}$ are auxiliary hidden variables.
We now construct an example of a polytope with a face activatable with $\ttformat$, in which all hidden variables are of dimension $2$.

\input{examples/boolean-stat/tt-diagonal-face}

\subsection{Construction of Boolean statistics to polytopes}

We now show that any convex polytope with Boolean vertices in $\parspace$ is the mean polytope to a Boolean statistic.
To construct the corresponding formulas we denote by $\restrictionofto{\meanset}{\arbset}$ the restriction of the vectors in the set $\meanset$ to the coordinates in $\arbset$.

\begin{theorem}\label{the:boolStatToPolytope}
    Let $\meanset$ be an arbitrary polytope with Boolean vertices in $\parspace$.
    Then we construct propositional formulas on atoms $\catvariableof{[\seldim]}$ by
    \begin{align*}
        \formulaofat{0}{\catvariableof{0}} =
        \begin{cases}
            \top & \ifspace \restrictionofto{\meanset}{\{0\}} = \{1\} \\ 
            \bot & \ifspace \restrictionofto{\meanset}{\{0\}} = \{0\} \\
            \catvariableof{0} & \ifspace \restrictionofto{\meanset}{\{0\}} = [0,1] 
        \end{cases}
    \end{align*}
    and iteratively for $\selindexin$ with $\selindex\geq1$ by
    \begin{align*}
        \formulaofat{\selindex}{\catvariableof{[\selindex+1]}} =
        \bigwedge_{\imelementwith \in \restrictionofto{(\meanset\cap \{0,1\}^{\seldim})}{[\selindex]}}
            {\Big(}&
        \big(\bigwedge_{\secselindex\in[\selindex]} \lnot^{1-\imelementat{\selvariable=\secselindex}} \formulaofat{\secselindex}{\catvariableof{[\secselindex+1]}}\big) \Rightarrow \\
            &\begin{cases}
                \top & \ifspace \restrictionofto{(\meanset\cap\imelement\times\rr^{\seldim-\selindex})}{\{\selindex\}} = \{1\} \\
                \bot & \ifspace \restrictionofto{(\meanset\cap\imelement\times\rr^{\seldim-\selindex})}{\{\selindex\}} = \{0\} \\ 
                \catvariableof{\selindex} & \ifspace \restrictionofto{(\meanset\cap\imelement\times\rr^{\seldim-\selindex})}{\{\selindex\}} = [0,1] 
            \end{cases}
            {\Big)}.
    \end{align*}
    Here we denote by $\restrictionofto{\meanset}{V}$ the projections of the vertices in $\meanset$ onto the subspaces $V$, and by $0_{\seldim}$ the zero vector in $\rr^{\seldim}$.
\end{theorem}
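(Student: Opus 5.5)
The plan is to read the theorem as the claim that, with atoms $\catvariableof{[\seldim]}$ (so $\catorder=\seldim$) and the trivial base measure $\trivbm$, the statistic $\sstat=(\formulaof{0},\ldots,\formulaof{\seldim-1})$ has mean polytope $\meansetof{\sstat,\trivbm}=\meanset$. By \secref{sec:meanPolytope} the mean polytope is $\convhullof{\imsetof{\sstat,\trivbm}{}}$, where $\imsetof{\sstat,\trivbm}{}=\{\sstatat{\shortcatindices}\wcols\shortcatindicesin\}$. Write $V\coloneqq\meanset\cap\{0,1\}^{\seldim}$. Since $\meanset$ is the convex hull of its Boolean vertices, we have $\meanset=\convhullof{V}$. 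Every Boolean point of a $0/1$-polytope is a vertex, so $V$ is exactly the vertex set. It therefore suffices to prove the set equality $\imsetof{\sstat,\trivbm}{}=V$.

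I would prove this by induction on $\selindex$, with the statement
\begin{align*}
    \big\{(\formulaofat{0}{\catindexof{[1]}},\ldots,\formulaofat{\selindex}{\catindexof{[\selindex+1]}}) \wcols \catindexof{[\selindex+1]}\in\{0,1\}^{\selindex+1}\big\} = \restrictionofto{V}{[\selindex+1]} \, .
\end{align*}
Throughout I read the case ``$=[0,1]$'' in the definition as ``both values $0$ and $1$ occur among the Boolean points''. For the base case $\selindex=0$, the three cases of $\formulaof{0}$ give images $\{1\}$, $\{0\}$ and $\{0,1\}$ respectively, and these match $\restrictionofto{V}{\{0\}}$. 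Note that $\formulaof{\selindex}$ depends only on $\catvariableof{[\selindex+1]}$; this follows inductively from its definition.

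For the inductive step, fix $\catindexof{[\selindex+1]}$ and let $u$ be the prefix vector $(\formulaofat{\secselindex}{\catindexof{[\secselindex+1]}})_{\secselindex<\selindex}$. By the induction hypothesis, $u\in\restrictionofto{V}{[\selindex]}$. The premise of the conjunct indexed by $\imelement$ is the conjunction $\bigwedge_{\secselindex}\lnot^{1-\imelementat{\selvariable=\secselindex}}\formulaof{\secselindex}$, exactly as in \theref{the:boolVecSat}. It evaluates to $1$ if and only if $\imelement=u$. All other implications are therefore vacuously true, and $\formulaof{\selindex}$ evaluates to the consequent attached to $u$, namely $1$, $0$ or $\catindexof{\selindex}$.

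Let $E_u=\{\imelementat{\selvariable=\selindex}\wcols\imelement\in V,\ \restrictionofto{\imelement}{[\selindex]}=u\}$. This set is non-empty because $u$ is a restriction of some point of $V$. The consequent for $u$ was chosen according to $E_u$, so the produced value always lies in $E_u$. This gives the inclusion ``$\subseteq$''. For ``$\supseteq$'', take $(u,b)\in\restrictionofto{V}{[\selindex+1]}$ and, by the induction hypothesis, an $\catindexof{[\selindex]}$ producing the prefix $u$. The prefix does not depend on $\catindexof{\selindex}$, so I may choose $\catindexof{\selindex}=b$. If $E_u=\{0,1\}$, the formula outputs $\catindexof{\selindex}=b$. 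If $E_u=\{b\}$, the constant consequent outputs $b$ directly.

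Taking $\selindex=\seldim-1$ yields $\imsetof{\sstat,\trivbm}{}=V$, and taking convex hulls gives $\meansetof{\sstat,\trivbm}=\meanset$. I expect the main obstacle to be the bookkeeping rather than any real difficulty. One point is that the restriction notation in the statement applied to $\meanset$ must be read on the Boolean points; otherwise the projection of a polytope is an interval, and the three cases would not be exhaustive in the intended sense. The other point is the independence of the prefix from the fresh atom $\catvariableof{\selindex}$, which is exactly what allows both extensions to be realized.
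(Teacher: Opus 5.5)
Your proposal is correct and follows essentially the same route as the paper. Both argue by induction on the prefix length that the Boolean image of $(\formulaof{0},\ldots,\formulaof{\selindex})$ equals the projection of the Boolean points of $\meanset$, using that the premises $\bigwedge_{\secselindex}\lnot^{1-\imelementat{\selvariable=\secselindex}}\formulaof{\secselindex}$ have disjoint models selecting the current prefix, so the attached consequent fixes the admissible values of the new coordinate, and both then take convex hulls. Your version is more careful than the paper's: the index bookkeeping is correct, and you explicitly use that the prefix does not depend on the fresh atom $\catvariableof{\selindex}$ to realize both extensions. Your concern about how to read the case distinction is harmless, since $\meanset\cap(\imelement\times\rr^{\seldim-\selindex})$ is a face of $\meanset$ whose vertices are exactly the Boolean points with prefix $\imelement$, so both readings select the same case.
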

\begin{proof}
    We show per induction, that for any $\selindexin$ the family of \HybridLogicNetworks{} with the statistic $\formulaof{[\selindex+1]}$ by the first $\selindex+1$ formulas has the mean polytope
    \begin{align}
        \label{eq:HLNindConTBS}
        \meansetof{\formulaof{[\selindex+1]},\trivbm}
        = \restrictionofto{\meanset}{[\selindex]} \, .
    \end{align}

    $"\selindex=0"$: The polytope $\restrictionofto{\meanset}{[\selindex]}=\{1\}$ (respectively $\restrictionofto{\meanset}{[\selindex]}=\{0\}$) is reproduced by $\formulaof{0}$ being a tautology (respectively a contradiction).
    In the case $\restrictionofto{\meanset}{[\selindex]}=[0,1]$ the polytope is reproduced by the any formula, which is neither a tautology nor a contradiction, and the atomic formula $\catvariableof{0}$ is an example of such a contingency.
    Since the projection of a 0-1 polytope onto the first coordinates is itself a 0-1 polytope, these are the only possible cases and we conclude that in all
    \begin{align*}
        \meansetof{\formulaof{[\selindex]},\trivbm}
        = \restrictionofto{\meanset}{[\selindex]} \, .
    \end{align*}

    $"\selindex\rightarrow\selindex+1"$:
    We define for any $\imelement\in\meansetof{\formulaof{[\selindex]},\trivbm}\cap\{0,1\}^{\selindex}$ the formula
    \begin{align*}
        \formulaofat{\selindex,\imelement}{\catvariableof{[\selindex+1]}}\coloneqq
        \bigwedge_{\secselindex\in[\selindex]} \lnot^{1-\imelementat{\selvariable=\secselindex}} \formulaofat{\secselindex}{\catvariableof{[\secselindex+1]}} \, .
    \end{align*}
    and notice that all formulas are satisfiable (since the corresponding vertices are in the $\meansetof{\formulaof{[\selindex]},\trivbm}$) and have disjoint models.
    We have for any Boolean vector $\imelement\in\{0,1\}^{\selindex}$ and $\lambda\in\{0,1\}$ that $\imelement\times\{\lambda\}\in\restrictionofto{\meanset}{[\selindex+1]}$, if and only if
    \begin{align*}
        \imelement \in \restrictionofto{\meanset}{[\selindex]} \quad \text{and} \quad
        \lambda \in \restrictionofto{\meanset\cap\imelement\times\rr^{\seldim-\selindex}}{[\selindex]} \, .
    \end{align*}
    With modus ponens on $\formulaofat{\selindex+1}{\catvariableof{[\selindex+2]}}$ this is further equivalent to the satisfaction of
    \begin{align*}
        \formulaofat{\selindex,\imelement}{\catvariableof{[\selindex+1]}} \land \lnot^{1-\lambda}\formulaofat{\selindex+1}{\catvariableof{[\selindex+2]}}
    \end{align*}
    and thus to $\imelement\times\{\lambda\}\in\meansetof{\formulaof{[\selindex+1]},\trivbm}$.
    The vertices of the 0/1 polytopes $\meansetof{\formulaof{[\selindex+1]},\trivbm}$ and $\restrictionofto{\meanset}{[\selindex+1]}$ thus coincide and both polytopes are therefore equivalent.

    By induction the equation \eqref{eq:HLNindConTBS} holds for arbitrary $\selindexin$.
    For $\selindex=\seldim-1$ this is the claim.
\end{proof}

\subsection{Examples in statistical physics and coding theory}

Let us now investigate cube-like faces to standard examples in statistical physics and coding theory.

\input{examples/boolean-stat/ising-two}

\input{examples/boolean-stat/ldpc-one}

%% file: examples/boolean-stat/hypercube-atomic.tex
\begin{example}[Atomic formulas]
    \label{exa:atomicFormulasHypercube}
    Let us consider the case of atomic formulas $\formulaofat{\selindex}{\shortcatvariables}$ defined with the coordinates at $\shortcatindicesin$ by
    \begin{align*}
        \formulaofat{\selindex}{\indexedshortcatvariables}
        = \catindexof{\selindex} \, .
    \end{align*}
    The mean polytope in this case is the $\catorder$-dimensional hypercube (see \exaref{exa:hypercubeFaces})
    \begin{align*}
        \meansetof{\atomformulaset,\ones}
        = \fullparcube
    \end{align*}
    which is called a simple polytope, since each vertex is contained in the minimal number of $\catorder$ facets.

    Each face is characterized by the projections onto each variable, which is either $\{0\}$, $\{1\}$ or $[0,1]$.
    The projections are represented by the tuple $\hardparam$ defined in the following way:
    \begin{itemize}
        \item We define the set $\hardlegset\subset[\atomorder]$ of variables, such that the projection onto the variable is $\{0\}$ or $\{1\}$
        \item We define to each $\selindex\in\hardlegset$ an index $\headindexof{\selindex}=0$ if the projection is $\{0\}$ and $\headindexof{\selindex}=1$ if the projection is $\{1\}$.
    \end{itemize}
    Trivially, each face of the hypercube is a cube face. 

    More generally, the mean polytope to a collection $\formulaofat{\selindex}{\shortcatvariables}$ of propositional formulas is the hypercube, if and only if for any Boolean vector $\imelementwith$ we have
    \begin{align*}
        \contraction{\bigwedge_{\selindexin}\lnot^{1-\imelementat{\indexedselvariable}}\formulaofat{\selindex}{\shortcatvariables}}
        \neq 0 \, .
    \end{align*}
    This means that no conjunction of possibly negated formulas from this statistic entails nor contradicts another formula.

\end{example}

%% file: examples/boolean-stat/simplex-minterm.tex
\begin{example}[Minterm formulas build simplices]
    \label{exa:mintermHLNSet}
    The set of minterm formulas is indexed by $\imelementwith\in\{0,1\}^{\seldim}$ and given by
    \begin{align*}
        \formulaofat{\imelement}{\shortcatvariables}
        = \bigwedge_{\selindex\in[\catorder]} \lnot^{1-\imelementat{\indexedselvariable}} \, \catvariableof{\selindex}
        = \onehotmapofat{\imelementwith}{\shortcatvariables}
    \end{align*}
    where by $\catvariableof{\selindex}$ we denote the $\selindex$-th atomic formula (see \exaref{exa:atomicFormulasHypercube}).
    The mean polytope to the minterm statistic and base measure $\basemeasure$ is the standard simplex of dimension
    \begin{align*}
        \cardof{\left\{\imelementwith\wcols\imelement\in\{0,1\}^{\seldim}\ncond\basemeasureat{\shortcatvariables=\imelementwith}\neq 0\right\}}-1 \, ,
    \end{align*}
    that is the set
    \begin{align*}
        \meansetof{\mintermformulaset,\basemeasure}
        = \convhullof{\onehotmapofat{\imelementwith}{\headvariables}\wcols\imelement\in\{0,1\}^{\seldim}\ncond\basemeasureat{\shortcatvariables=\imelementwith}\neq 0} \, .
    \end{align*}
    We notice that for Boolean base measures $\meansetof{\mintermformulaset,\basemeasure}=\realizabledistsof{\basemeasure}$.
    The simplex is further cube-like, since each face is indexed by a subset $\arbset\subset\{0,1\}^{\seldim}$ of vertices and the intersection of the simplex with the cube face
    $\facesymbolof{(\{0,1\}^p\backslash\arbset,\onetuple{\{0,1\}^p\backslash\arbset})}$.




    %
    More generally, the mean polytope to a Boolean statistic is a standard simplex, if and only if each target coordinate restriction $\formulaof{\secselindex}$ contradicts all others, that is
    \begin{align*}
        \uniquantwrtof{\selindex\neq\secselindex\in[\seldim]}{\formulaof{\selindex}\models\lnot\formulaof{\secselindex}} \, .
    \end{align*}
\end{example}

%% file: examples/boolean-stat/nonel_hlnstat_maxent.tex
\begin{example}[Maximum entropy distribution with non-elementary activation cores]\label{exa:nonelHlnstat}

    Consider two atomic variables $\catvariableof{0}$ and $\catvariableof{1}$ and a statistic $\formulaset$ consisting of the formulas
    \begin{align*}
        \formulaofat{0}{\catvariableof{0},\catvariableof{1}} = \left( \catvariableof{0} \land \catvariableof{1} \right) \quad, \quad
        \formulaofat{1}{\catvariableof{0},\catvariableof{1}} = \left( \catvariableof{0} \Rightarrow \catvariableof{1} \right)
    \end{align*}
    with the coordinatewise expressions
    \begin{align*}
        \formulaofat{0}{\catvariableof{0},\catvariableof{1}} =
        \coloredmatrixof{
            0 & 0 \\
            0 & 1
        }{\catvariableof{0},\catvariableof{1}}
        \quad, \quad
        \formulaofat{1}{\catvariableof{0},\catvariableof{1}} =
        \coloredmatrixof{
            1 & 1 \\
            0 & 1
        }{\catvariableof{0},\catvariableof{1}} \, .
    \end{align*}
    We can think of $\catvariableof{0}$ as a feature on an invoice, and $\catvariableof{1}$ as a feature on the accounting proposal.

    From this we have
    \begin{align*}
        \bencodingofat{(\formulaof{0},\formulaof{1})}{\headvariableof{0}=0,\headvariableof{1}=0,\catvariableof{0},\catvariableof{1}} &=
        \coloredmatrixof{
            0 & 0 \\
            1 & 0
        }{\catvariableof{0},\catvariableof{1}} \quad, \quad \\
       \bencodingofat{(\formulaof{0},\formulaof{1})}{\headvariableof{0}=0,\headvariableof{1}=1,\catvariableof{0},\catvariableof{1}} &=
        \coloredmatrixof{
            1 & 1 \\
            0 & 0
        }{\catvariableof{0},\catvariableof{1}} \quad, \quad \\
        \bencodingofat{(\formulaof{0},\formulaof{1})}{\headvariableof{0}=1,\headvariableof{1}=0,\catvariableof{0},\catvariableof{1}} &=
        \coloredmatrixof{
            0 & 0 \\
            0 & 0
        }{\catvariableof{0},\catvariableof{1}} \quad \text{and} \quad \\
        \bencodingofat{(\formulaof{0},\formulaof{1})}{\headvariableof{0}=1,\headvariableof{1}=1,\catvariableof{0},\catvariableof{1}} &=
        \coloredmatrixof{
            0 & 0 \\
            0 & 1
        }{\catvariableof{0},\catvariableof{1}} \, .
    \end{align*}

    Since the only vanishing slice of $\bencodingof{\formulaset}$ with respect to the head variables is that to $\headindexof{0,1} = (1,0)$, the vertices of the mean polytope are the vectors to the other head indices.
    The mean polytope is the convex hull of these vertices
    \begin{align*}
        \meansetof{(\formulaof{0},\formulaof{1})} =
        \convhullof{\coloredmatrixof{
                        0 \\ 0
        }{\selvariable},
            \coloredmatrixof{
                0 \\ 1
            }{\selvariable},
            \coloredmatrixof{
                1 \\ 1
            }{\selvariable}} \, .
    \end{align*}

    This polytope has a non cube-like face (sketched blue in \figref{fig:nonelHlnstatMaxent}), which is the convex hull of the vertices $(0 \, 0)^T, \, (1 \, 1)^T$.
    This face is parametrized by the ($\cpformat$-rank 2) hard activation core
    \begin{align*}
        \kcoreofat{(0,0),(1,1)}{\headvariableof{0},\headvariableof{1}} =
        \onehotmapofat{(0,0)}{\headvariableof{0},\headvariableof{1}}
        + \onehotmapofat{(1,1)}{\headvariableof{0},\headvariableof{1}} =
        \coloredmatrixof{
            1 & 0 \\
            0 & 1
        }{\headvariableof{0},\headvariableof{1}}
    \end{align*}
    and has the refined base measure
    \begin{align*}
        \contractionof{\kcoreofat{(0,0),(1,1)}{\headvariableof{0},\headvariableof{1}}
            ,\bencodingofat{\formulaset}{\headvariableof{0},\headvariableof{1},\catvariableof{0},\catvariableof{1}}}{\catvariableof{0},\catvariableof{1}}
        =  \coloredmatrixof{
            0 & 0 \\
            1 & 1
        }{\catvariableof{0},\catvariableof{1}}  \, .
    \end{align*}
    Any mean parameter $\meanparam$ on the interior of that face can be parametrized by a scalar $\lambda\in(0,1)$
    \begin{align*}
        \meanparamofat{\lambda}{\selvariable} =
        \coloredmatrixof{\lambda & \lambda}{\selvariable} \, .
    \end{align*}
    With the canonical parameters $\canparamat{\selvariable}\in\rr^2$ of the maximum entropy distributions on this face by
    \begin{align*}
        \probat{\catvariableof{0},\catvariableof{1}} =
        \frac{1}{1+\expof{\canparamat{\selvariable=0}+\canparamat{\selvariable=1}}}
        \coloredmatrixof{
            0 & 0                                                               \\
            1 & \expof{\canparamat{\selvariable=0}+\canparamat{\selvariable=1}}
        }{\catvariableof{0},\catvariableof{1}}
    \end{align*}
    we get the correspondence by the sigmoid
    \begin{align*}
        \lambda = \frac{1}{1+\expof{-(\canparamat{\selvariable=0}+\canparamat{\selvariable=1})}} \, .
    \end{align*}

    Note, that the hard activation core $\kcoreofat{(0,0),(1,1)}{\headvariableof{0},\headvariableof{1}}$ to the blue face is the only non-elementary activation core.
    While the vertices have always elementary cores, the other non-vertex faces have elementary activation cores
    \begin{align*}
        \kcoreofat{(0,0),(1,0),(1,1)}{\headvariableof{0},\headvariableof{1}}
        &= \coloredmatrixof{
            1 & 1 \\
            1 & 1
        }{\headvariableof{0},\headvariableof{1}}
        = \onesat{\headvariableof{0}} \otimes \onesat{\headvariableof{1}}
        , \quad\\
        \kcoreofat{(0,0),(1,0)}{\headvariableof{0},\headvariableof{1}}
        &= \coloredmatrixof{
            1 & 0 \\
            1 & 0
        }{\headvariableof{0},\headvariableof{1}}
        = \onesat{\headvariableof{0}} \otimes \onehotmapofat{0}{\headvariableof{1}}
        ,\\
        \kcoreofat{(1,0),(1,1)}{\headvariableof{0},\headvariableof{1}}
        &= \coloredmatrixof{
            1 & 1 \\
            0 & 0
        }{\headvariableof{0},\headvariableof{1}}
        = \onehotmapofat{0}{\headvariableof{0}} \otimes \onesat{\headvariableof{1}}   \, .
    \end{align*}
    The maximum entropy distributions to mean parameters on the interior of all other faces than the blue face are represented by \ComputationActivationNetwork{}s with only elementary activation cores.
    The corresponding lattice is sketched in \figref{fig:latticeNonelHlnstat}.

    \begin{figure}
        \begin{center}
            \input{tikz/boolean-stat/nonel_hlnstat_maxent.tikz}
        \end{center}
        \caption{
            a) Mean polytope of the statistic $\hlnstat=(\catvariableof{0} \land \catvariableof{1}, \catvariableof{0} \Rightarrow \catvariableof{1})$ (thick), as a subset of the cube $[0,1]^2$ (dashed).
            The blue line is the face of the polytope, which is not cube-like, that is not an intersection of the polytope with the faces of the hypercube.
            We further define for $\lambda\in(0,1)$ a mean parameter $\meanparamofat{\lambda}{\selvariable} = [\lambda \,  \lambda]^T$ which is on the interior of the blue face.
            b) Corresponding \ComputationActivationNetwork{} being the maximum entropy distribution reproducing $\meanparamofat{\lambda}{\selvariable}$, when $\lambda$ is the sigmoid of $\canparamat{\selvariable=0}+\canparamat{\selvariable=1}$.
        }\label{fig:nonelHlnstatMaxent}
    \end{figure}

    \begin{figure}
        \begin{center}
            \input{tikz/boolean-stat/lattice_nonel_hlnstat.tikz}
        \end{center}
        \caption{
            Face lattice $\facelatticeof{(\catvariableof{0} \land \catvariableof{1},\catvariableof{0} \Rightarrow \catvariableof{1})}$ to \exaref{exa:nonelHlnstat}.
            The directed arrows represent inclusion of the faces, which is a partial order of the faces.
            The face \textcolor{\concolor}{$\facesymbolof{(0,0),(1,1)}$} is the only face, which is not representable by an elementary hypergraph.
            It is representable in a $\cpformat$ with hidden rank $2$
        }\label{fig:latticeNonelHlnstat}
    \end{figure}

\end{example}

%% file: tikz/boolean-stat/nonel_hlnstat_maxent.tikz
\begin{tikzpicture}[scale=0.35]

    \node[anchor=center] at (-4,6) {$a)$};

    \node[anchor=east] at (0,0) {$\begin{pmatrix}
                                      0 \\ 0
    \end{pmatrix}$};
    \node[anchor=west] at (5,0) {$\begin{pmatrix}
                                      1 \\ 0
    \end{pmatrix}$};
    \node[anchor=west] at (5,5) {$\begin{pmatrix}
                                      1 \\ 1
    \end{pmatrix}$};
    \node[anchor=east] at (0,5) {$\begin{pmatrix}
                                      0 \\ 1
    \end{pmatrix}$};

    \drawvectormark{0}{0}
    \drawvectormark{0}{5}
    \drawvectormark{5}{0}
    \drawvectormark{5}{5}

    \draw[thick] (5,5) -- (5,0) -- (0,0);
    \draw[dashed] (0,0) -- (0,5) -- (5,5);
    \draw[\concolor, thick] (0,0) -- (5,5);

    \drawvectormark{3}{3}
    \node[anchor=east] at (3,3) {$\meanparam^{\lambda}$};

    \begin{scope}
        [shift={(20,2)}]

        \node[anchor=center] at (-4,4) {$b)$};




        \draw[\concolor] (-1,1) rectangle (5,4);
        \node[anchor=center,\concolor] (A) at (2,2.5) {\corelabelsize $\begin{pmatrix}
                                        1 & 0 \\
                                        0 & 1
        \end{pmatrix}$};

        \draw[->-] (0,-1)--(0,0);
        \node[right] (text) at (0,0) {\colorlabelsize $\headvariableof{0}$};
        \draw[\concolor] (0,0)--(0,1);
        \drawvariabledot{0}{0}

        \draw[\probcolor] (0,0) -- (-1.5,0);
        \draw[\probcolor] (-1.5,1.5) rectangle (-8.5,-1.5);
        \node[anchor=center,\probcolor] (text) at (-5,0) {\corelabelsize $\begin{pmatrix}
                                                                             1 \\
                                                                             \expof{\canparamat{\selvariable=0}}
        \end{pmatrix}$};

        \draw[->-] (4,-1)--(4,0);
        \node[left] (text) at (4,0) {\colorlabelsize $\headvariableof{1}$};
        \draw[\concolor] (4,0)--(4,1);
        \drawvariabledot{4}{0}

        \draw[\probcolor] (4,0) -- (5.5,0);
        \draw[\probcolor] (5.5,1.5) rectangle (12.5,-1.5);
        \node[anchor=center,\probcolor] (text) at (9,0) {\corelabelsize $\begin{pmatrix}
                                                                             1 \\
                                                                             \expof{\canparamat{\selvariable=1}}
        \end{pmatrix}$};


        \draw (-1,-1) rectangle (5,-3);
        \node[anchor=center] (text) at (2,-2) {\corelabelsize $\bencodingof{(\formulaof{0},\formulaof{1})}$};
        \draw[-<-] (0,-3)--(0,-5) node[midway,left] {\colorlabelsize $\catvariableof{0}$};

        \draw[-<-] (4,-3)--(4,-5) node[midway,right] {\colorlabelsize $\catvariableof{1}$};

    \end{scope}

\end{tikzpicture}

%% file: tikz/boolean-stat/lattice_nonel_hlnstat.tikz
\begin{tikzpicture}[scale=0.35]

    \node[anchor=center] at (0,9) {$\facesymbolof{\varnothing}$};
    \draw[->] (-1,8.5) -- (-6,6.5);
    \draw[->] (0,8) -- (0,7);
    \draw[->] (1,8.5) -- (6,6.5);

    \node[anchor=center] at (-8,6) {$\facesymbolof{(0,0)}$};
    \draw[->] (-8,5) -- (-8,4);

    \node[anchor=center] at (0,6) {$\facesymbolof{(1,0)}$};
    \draw[->] (-1,5.25) -- (-6,3.5);
    \draw[->] (0,5) -- (0,4);

    \node[anchor=center] at (8,6) {$\facesymbolof{(1,1)}$};

    \draw[->] (7,5.25) -- (2,3.5);

    \node[anchor=center] at (-8,3) {$\facesymbolof{(0,0),(1,0)}$};
    \draw[->] (-7.5,2) -- (-1.5,1);
    \node[anchor=center] at (0,3) {$\facesymbolof{(1,0),(1,1)}$};
    \draw[->] (0,2) -- (0,1);

    \node[] (shift) at (5,0) {};
    \draw[->] (8,5) -- ($(7,4)+(shift)$);
    \draw[->] (-7.5,5) -- ($(6,3.5)+(shift)$);
    \node[anchor=center] at ($(8,3)+(shift)$) {\textcolor{\concolor}{$\facesymbolof{(0,0),(1,1)}$}};
    \draw[->] ($(7,2)+(shift)$) -- (1.5,1);

    \draw[dashed] (9.5,-1) -- (9.5,10);
    \node[anchor=west] at (9.75,9.5) {\corelabelsize $\cprankof{\facesymbol}=2$};
    \node[anchor=east] at (9.25,9.5) {\corelabelsize $\cprankof{\facesymbol}=1$};

    \node[anchor=center] at (0,0) {$\facesymbolof{(0,0),(1,0),(1,1)}$};

\end{tikzpicture}

%% file: tikz/boolean-stat/tt-format.tex
\stantikzpic{

    \drawtextnode{0}{5}{$a)$}

    \drawvariablenode{0}{0}{$\catvariableof{0}$}{X0}
    \drawvariablenode{4}{0}{$\catvariableof{1}$}{X1}
    \drawvariablenode{8}{0}{$\catvariableof{2}$}{X2}
    \drawtextnode{11}{0}{$\cdots$}
    \drawvariablenode{14}{0}{$\catvariableof{\catorder-1}$}{Xdmin1}

    \drawvariablenode{2}{4}{$\decvariableof{0}$}{I0}
    \drawvariablenode{6}{4}{$\decvariableof{1}$}{I1}
    \drawtextnode{9}{4}{$\cdots$}
    \drawvariablenode{12}{4}{$\decvariableof{\catorder-2}$}{Idmin2}

    \node[anchor=south] (E0) at (0.5,2) {\colorlabelsize $e_0$};
    \draw (X0) -- (I0);

    \node[anchor=south] (E1) at (4,2) {\colorlabelsize $e_1$};
    \draw (4,2) -- (X1);
    \draw (4,2) -- (I0);
    \draw (4,2) -- (I1);

    \node[anchor=south] (E1) at (8,2) {\colorlabelsize $e_2$};
    \draw (8,2) -- (X2);
    \draw (8,2) -- (I1);
    \draw (8,2) -- (9,3);

    \node[anchor=south] (E0) at (13.5,2) {\colorlabelsize $e_{\catorder\shortminus2}$};
    \draw (Xdmin1) -- (Idmin2);

    \begin{scope}[shift={(18,0)}]
        \drawtextnode{-2}{5}{$b)$}

        \drawtensorblock{0}{2.5}{1}{$\hypercoreof{0}$}
        \drawvwire{0}{1.5}{$\catvariableof{0}$}
        \drawtensorblock{4}{2.5}{1}{$\hypercoreof{1}$}
        \drawvwire{4}{1.5}{$\catvariableof{1}$}

        \drawtensorblock{8}{2.5}{1}{$\hypercoreof{2}$}
        \drawvwire{8}{1.5}{$\catvariableof{2}$}

        \drawtextnode{12}{2.5}{$\cdots$}

        \drawtensorblock{16}{2.5}{1}{$\hypercoreof{\catorder\shortminus1}$}
        \drawvwire{16}{1.5}{$\catvariableof{\catorder\shortminus1}$}

        \drawhwire{1}{2.5}{$\decvariableof{0}$}
        \drawhwire{5}{2.5}{$\decvariableof{1}$}
        \drawhwire{9}{2.5}{$\decvariableof{2}$}
        \drawhwire{13}{2.5}{$\decvariableof{\catorder\shortminus1}$}
    \end{scope}

}

%% file: examples/boolean-stat/tt-diagonal-face.tex
\begin{example}[$\ttformat$ representation of diagonal faces]
    \label{exa:tt_diagonal_face}
    Consider the vertex set
    \begin{align*}
        \imset
        \coloneqq \{0,1\}^{\seldim} / \imelementat{\selvariable}
    \end{align*}
    for some Boolean vector $\imelementat{\selvariable}$.
    We are interested in the face $\facesymbolof{\triangleleft,1}$ with the normal $\canparamwith=\frac{1}{2} (2\imelementwith - \onesat{\selvariable})$.
    Its vertices are the $\seldim$ elements of $\{0,1\}^{\seldim}$, which differ from $\imelementwith$ in exactly one coordinate (i.e. those with Hamming distance of $1$ from $\imelementat{\selvariable}$).
    We denote these vertices by $\imelementof{\selindex}$ for $\selindexin$, which coordinates to $\secselindex\in[\seldim]$ are
    \begin{align*}
        \imelementofat{\selindex}{\selvariable=\secselindex}
        = \begin{cases}
              \imelementat{\selvariable=\secselindex} & \ifspace \secselindex\neq\selindex \\
              1-\imelementat{\selvariable=\secselindex} & \ifspace \secselindex=\selindex
        \end{cases} \, .
    \end{align*}

    We now represent their sum in an $\ttformat$ format.
    The Boolean hidden variables are denoted by $\decvariableof{[\seldim-1]}$ and are interpreted as indicators, whether the coordinate flip has happened in $[\selindex]$ coordinates.
    We construct a $\ttformat$ core for $\selindex=0$ by
    \begin{align*}
        \hypercoreofat{0}{\headvariableof{0},\decvariableof{0}}
        = \onehotmapofat{\imelementat{\selvariable=0}}{\headvariableof{0}} \otimes \fbasisat{\decvariableof{0}}
        + \onehotmapofat{1-\imelementat{\selvariable=0}}{\headvariableof{0}} \otimes \tbasisat{\decvariableof{0}} \, ,
    \end{align*}
    for $\selindex\notin\{0,\seldim-1\}$ the cores
    \begin{align*}
        \hypercoreofat{\selindex}{\decvariableof{\selindex-1},\headvariableof{\selindex},\decvariableof{\selindex}}
        = \, & \fbasisat{\decvariableof{\selindex-1}} \otimes \onehotmapofat{1-\imelementat{\indexedselvariable}}{\headvariableof{\selindex}} \otimes \tbasisat{\decvariableof{\selindex}}
        + \fbasisat{\decvariableof{\selindex-1}} \otimes \onehotmapofat{\imelementat{\indexedselvariable}}{\headvariableof{\selindex}} \otimes \fbasisat{\decvariableof{\selindex}} \\
        &+ \tbasisat{\decvariableof{\selindex-1}} \otimes \onehotmapofat{\imelementat{\indexedselvariable}}{\headvariableof{\selindex}} \otimes \tbasisat{\decvariableof{\selindex}}
    \end{align*}
    and for $\selindex=\seldim-1$
    \begin{align*}
        \hypercoreofat{\seldim-1}{\decvariableof{\seldim-2},\headvariableof{\seldim-1}}
        = \fbasisat{\decvariableof{\seldim-2}} \otimes \onehotmapofat{1-\imelementat{\selvariable=\seldim-1}}{\headvariableof{\seldim-1}}
        + \tbasisat{\decvariableof{\seldim-2}} \otimes \onehotmapofat{\imelementat{\selvariable=\seldim-1}}{\headvariableof{\seldim-1}} \, .
    \end{align*}
    For this tensor network in the $\ttformat$ format we then have
    \begin{align*}
        \sum_{\selindexin} \onehotmapofat{\imelementof{\selindex}}{\headvariables}
        = \contractionof{\{\hypercoreofat{0}{\headvariableof{0},\decvariableof{0}},\hypercoreofat{\seldim-1}{\decvariableof{\seldim-2},\headvariableof{\seldim-1}}\}\cup\{\hypercoreofat{\selindex}{\decvariableof{\selindex-1},\headvariableof{\selindex},\decvariableof{\selindex}}\wcols\selindex\in\{1,\ldots,\seldim-2\}\}}{\headvariables} \, .
    \end{align*}
    The dimension $2$ to the hidden variables is furthermore minimal, since each matrification based on a partition of $[\seldim]$ into non-empty sets has a matrix rank of $2$ \cite{holtz_manifolds_2012}.
\end{example}

%% file: examples/boolean-stat/ising-two.tex
\begin{example}[Ising model on $2$ nodes]
    Consider two Boolean variables $\catvariableof{0},\catvariableof{1}$ and the Ising statistic $\sstat=(\formulaof{0},\formulaof{1},\formulaof{2})$ by three propositional formulas
    \begin{align*}
        \formulaofat{0}{\catvariableof{[2]}} = \catvariableof{0} \quad , \quad
        \formulaofat{1}{\catvariableof{[2]}} = \catvariableof{1} \andspace
        \formulaofat{2}{\catvariableof{[2]}} = \catvariableof{0}\land\catvariableof{1} \, .
    \end{align*}
    In the Ising interpretation, the Boolean variables represent interacting spins at two locations.
    Their value is then measured by the first two formulas and their interaction by the third.

    The vertices of the mean polytope to this statistic are
    \begin{align*}
        &\sencsstatat{\catvariableof{[2]}=(0,0),\selvariable} =
        \coloredmatrixof{0 \\ 0 \\ 0}{\selvariable}
        \quad , \quad
        \sencsstatat{\catvariableof{[2]}=(0,1),\selvariable} =
        \coloredmatrixof{0 \\ 1 \\ 0}{\selvariable}
        \quad , \quad \\
        &\sencsstatat{\catvariableof{[2]}=(1,0),\selvariable} =
        \coloredmatrixof{1 \\ 0 \\ 0}{\selvariable}
        \quad \text{and} \quad
        \sencsstatat{\catvariableof{[2]}=(1,1),\selvariable} =
        \coloredmatrixof{1 \\ 1 \\ 1}{\selvariable}
        \quad .
    \end{align*}

    The mean polytope is the convex hull of these, sketched as:
    \begin{center}
        \tdplotsetmaincoords{55}{20} 

        \begin{tikzpicture}[tdplot_main_coords, scale=2,
            mainline/.style={thick},
            invisibleline/.style={dashed, gray}
        ]

            \coordinate (A) at (0, 0, 0);
            \coordinate (B) at (0, 1, 0);
            \coordinate (C) at (1, 0, 0);
            \coordinate (D) at (1, 1, 1);

            \fill[cyan!20, opacity=0.6] (A) -- (B) -- (C) -- cycle;

            \draw[mainline] (B) -- (C) -- (D) -- cycle; 
            \draw[mainline] (A) -- (B);
            \draw[mainline] (A) -- (C);
            \draw[mainline] (A) -- (D);

            \draw[invisibleline] (A) -- (B);
            \draw[invisibleline] (A) -- (C);
            \draw[invisibleline] (A) -- (D);

            \foreach \point/\label/\pos in {A/000/below left, B/010/below right, C/100/above left, D/111/above} {
                \draw[fill=black] (\point) circle (1pt) node[\pos] {$\label$};
            }

            \draw[->, gray] (0,0,0) -- (1.2,0,0) node[right] {$\meanparamat{\selvariable=0}$};
            \draw[->, gray] (0,0,0) -- (0,1.2,0) node[above] {$\meanparamat{\selvariable=1}$};
            \draw[->, gray] (0,0,0) -- (0,0,1.2) node[above] {$\meanparamat{\selvariable=2}$};

        \end{tikzpicture}
    \end{center}
    We notice, that the convex hull of any three out of the four vertices builds a facet.
    The only cube-like facet of these four (sketched in cyan in the above plot) is the convex hull of
    \begin{align*}
        \convhullof{\coloredmatrixof{0 \\ 0 \\ 0}{\selvariable},\coloredmatrixof{1 \\ 0 \\ 0}{\selvariable},\coloredmatrixof{0 \\ 1 \\ 0}{\selvariable}}
    \end{align*}
    to which we have the parametrization by $(\hardlegset,\headindexof{\hardlegset})=(\{2\},0)$.

    As a consequence, there are maximum entropy distributions of mean parameters in the Ising statistics, which do not have a representation by an elementary \CompActNet{} in the Ising statistic.
\end{example}

%% file: examples/boolean-stat/ldpc-one.tex
\begin{example}[Codeword Polytopes]
    Consider the codeword polytope with the vertices
    \begin{align*}
        \imset
        =\left\{\headindexof{[3]} \wcols \sum_{\catenumerator\in[3]} \headindexof{\catenumerator} \mod 2=0 \right\}
        = \left\{
              \coloredmatrixof{0 \\ 0 \\ 0}{\selvariable},
              \coloredmatrixof{0 \\ 1 \\ 1}{\selvariable},
              \coloredmatrixof{1 \\ 0 \\ 1}{\selvariable},
              \coloredmatrixof{1 \\ 1 \\ 0}{\selvariable}
        \right\}
    \end{align*}
    We interpret the vertices by the strings of length $3$, which parity vanishes.
    Such constraints are common in the construction of codewords using parity-check codes (see \cite{gallager_low-density_1963}).
    \begin{center}
        \tdplotsetmaincoords{55}{30} 

        \begin{tikzpicture}[tdplot_main_coords, scale=2,
            mainline/.style={thick},
            invisibleline/.style={dashed, gray}
        ]

            \coordinate (A) at (0, 0, 0);
            \coordinate (B) at (0, 1, 1);
            \coordinate (C) at (1, 0, 1);
            \coordinate (D) at (1, 1, 0);


            \draw[mainline] (B) -- (C) -- (D) -- cycle; 
            \draw[mainline] (A) -- (B);
            \draw[mainline] (A) -- (C);
            \draw[mainline] (A) -- (D);

            \draw[invisibleline] (A) -- (B);
            \draw[invisibleline] (A) -- (C);
            \draw[invisibleline] (A) -- (D);

            \foreach \point/\label/\pos in {A/000/below left, B/011/below right, C/101/above left, D/110/above} {
                \draw[fill=black] (\point) circle (1pt) node[\pos] {$\label$};
            }

            \draw[->, gray] (0,0,0) -- (1.2,0,0) node[right] {$\meanparamat{\selvariable=0}$};
            \draw[->, gray] (0,0,0) -- (0,1.2,0) node[above] {$\meanparamat{\selvariable=1}$};
            \draw[->, gray] (0,0,0) -- (0,0,1.2) node[above] {$\meanparamat{\selvariable=2}$};

        \end{tikzpicture}
    \end{center}

    All facets are non-cube-like, whereas all other faces are cube-like (the edges are labeled by the $6$ facets of $[0,1]^3$).

    Each facet is characterized by the Hamming distance of exactly $1$ to a single of the $4$ cube vertices, which are not codewords (see Example~3.9 in \cite{wainwright_graphical_2008}).
    We can construct $\ttformat$ activation tensors with rank $2$, which calculate the Hamming distance to these vertices via hidden ranks.
    Each core tensor in that format adds the contribution of a mode to the Hamming distance.

    In more generality, Low-Density-Parity-Checking Codes (see \cite{gallager_low-density_1963}) construct codewords by a collection of parity constraints on subsets of variables.
    Such polytopes are also known as cycle polytopes, and studied e.g. in \cite{grotschel_geometric_1993}.

\end{example}

%% file: sections/outlook.tex
\section{Outlook}

This work characterized all discrete maximum entropy distributions as \ComputationActivationNetworks{}, a class of tensor networks.
The mean polytope contains those mean parameters, to which entropy maximization problems are feasible.
While exponential families are known to be the maximum entropy distributions to the interior of the polytope, we showed that refining the base measure with respect to faces enables the representation of maximum entropy distributions to mean parameters on the particular face.
To this end we utilized the tensor network formalism and showed how to represent exponential families and their generalizations to maximum entropy distributions.

One important application is estimation of maximum entropy distributions from data, such as learning in exponential families and graphical models \cite{jordan_learning_1998}.
Maximum likelihood problems are known to be dual to maximum entropy problems \cite{koller_probabilistic_2009} and therefore face similar representation challenges.
Here the mean parameters of empirical distributions serve as sufficient statistics and are matched by trainable weights in algorithms such as iterative proportional fitting \cite{darroch_generalized_1972,csiszar_geometric_1989}.
Optimizing parameters within a poor choice of learning architecture will lead to numerical instabilities, resulting from the absence of an optimum.
Choosing the right tensor format representing the refined base measure guarantees the existence of an optimum.

Further research might derive a relation between tensor format complexity \cite{hackbusch_tensor_2012} to notions of facet complexity \cite{grotschel_geometric_1993}.
The complexity of faces from randomly drawn polytopes is further interesting, especially in the asymptotic limit of large statistics \cite{vershynin_high-dimensional_2018, wainwright_high-dimensional_2019}.

%
%
%